\begin{document}
\newtheorem{theorem}{Theorem}[section]
\newtheorem*{theorem*}{Theorem}
\newtheorem{lemma}[theorem]{Lemma}
\newtheorem{definition}[theorem]{Definition}
\newtheorem{claim}[theorem]{Claim}
\newtheorem{example}[theorem]{Example}
\newtheorem{remark}[theorem]{Remark}
\newtheorem{proposition}[theorem]{Proposition}
\newtheorem{corollary}[theorem]{Corollary}

\title{Isoperimetric Inequalities and topological overlapping for quotients of Affine buildings}
\author{Izhar Oppenheim}
\affil{Department of Mathematics\\
 The Ohio State University  \\
 Columbus, OH 43210, USA \\
E-mail: izharo@gmail.com}

\maketitle
\textbf{Abstract}. We prove isoperimetric inequalities for quotients of $n$-dimensional Affine buildings. We use these inequalities to prove topological overlapping for the $2$-dimensional skeletons of these buildings.   \\ \\
\textbf{Mathematics Subject Classification (2010)}. Primary 05E45, Secondary 05A20, 51E99  . \\
\textbf{Keywords}. High dimensional expanders, Isoperimetric inequalities, Topological overlapping.

\section{Introduction}
The notion of topological overlapping was defined by Gromov in \cite{Grom} as:

\begin{definition}
Let $X$ be an $n$-dimensional simplicial complex. Given a map $f : X^{(0)} \rightarrow \mathbb{R}^n$ (where $X^{(0)}$ are the vertices of $X$), a topological extension of $f$ is a continuous map $\widetilde{f} : X \rightarrow \mathbb{R}^n$ which coincides with $f$ on $X^{(0)}$.
A simplicial complex $X$ is said to have $c$-topological overlapping (with $1 \geq c>0$) if for every $f: X^{(0)} \rightarrow \mathbb{R}^n$ and every topological extension $\widetilde{f}$, there is a point $z \in \mathbb{R}^n$ such that 
$$\vert \lbrace \sigma \in X^{(n)} : z \in \widetilde{f} (\sigma)\rbrace \vert \geq c \vert X^{(n)} \vert ,$$
(where $X^{(n)}$ are the $n$-dimensional simplices of $X$). 
In other words, this means that at least a $c$ fraction of the images of $n$-simplices intersect at a single point.  \\
A family of pure $n$-dimensional simplicial complexes $\lbrace X_j \rbrace$ is called a family of topological expanders, if there is some $c>0$ such that for every $j$, $X_j$ has  $c$-topological overlapping. \\
\end{definition}
In \cite{Grom}, Gromov gave examples of families of topological expanders, but in all the examples the degree of the vertices was unbounded. This raised the question if there are families of topological expanders with a bounded degree on the vertices. This question received a positive answer in \cite{KKL}, where Kaufman, Kazhdan and Lubotzky showed that the $2$-skeleton of (non partite) $3$-dimensional Ramanujan complexes with large enough thickness, has topological overlapping depending only on the thickness. \\
However, some arguments given in \cite{KKL} relayed heavily on the unique structure of $3$-dimensional Ramanujan complexes. In this article we generalize the results of \cite{KKL}, such that they will hold for quotients of $n$-dimensional (for $n>2$) classical affine buildings of any type, assuming large enough thickness. We should remark that a lot of the ideas we use in our proofs already appear in some form in \cite{KKL}. \\
As in \cite{KKL}, we derive topological overlapping from higher isoperimetric inequalities. To state these inequalities, we shall need some additional definitions given below (this definitions are repeated and expended in subsection \ref{subsection F_2 expansion} below). \\
Let $X$ be a pure $n$-dimensional simplicial complex let $-1 \leq k \leq n$. Denote by $X^{(k)}$ the set of simplices of dimension $k$ and define a weight function 
$$m: \bigcup_{k=-1}^n X^{(k)} \rightarrow \mathbb{R} ,$$
$$\forall \sigma \in \bigcup_{k=-1}^n X^{(k)}, m (\sigma) =  \dfrac{(n-k)! \vert \lbrace \eta \in X^{(n)} : \sigma \subseteq \eta \rbrace \vert}{\vert X^{(n)} \vert}.$$
Denote  $C^{k} (X,\mathbb{F}_2)$ to be the $k$-cochains with coefficients in $\mathbb{F}_2$, i.e., $C^{k} (X,\mathbb{F}_2) = \lbrace \phi : X^{(k)} \rightarrow \mathbb{F}_2 \rbrace$ and define a norm on $C^k  (X,\mathbb{F}_2)$ as 
$$\Vert \phi \Vert = \sum_{\sigma \in X^{(k)}, \phi (\sigma)=1} m(\sigma).$$
Next, define the differential $d_k :  C^k  (X,\mathbb{F}_2) \rightarrow C^{k+1} (X,\mathbb{F}_2)$ as 
$$\forall \eta \in X^{(k+1)}, d_k \phi (\eta) = \sum_{\sigma \in X^{(k)}, \sigma \subset \eta} \phi (\sigma),$$
where the addition above is in $\mathbb{F}_2$. 
For $k \geq 0$, define 
$$B^k (X,\mathbb{F}_2) = \lbrace d_{k-1} \phi : \phi \in C^{k-1} (X,\mathbb{F}_2) \rbrace \subseteq C^k (X,\mathbb{F}_2).$$
A cochain $\phi \in C^k (X,\mathbb{F}_2)$ is called minimal if 
$$\Vert \phi \Vert \leq \Vert \phi - \psi \Vert, \forall \psi \in B^k (X,\mathbb{F}_2).$$
\begin{remark}
The reader should note that our definition of the norm is slightly different than the one given in \cite{KKL} (and in other references). This is done for technical reasons and any of our results can be easily translated to results in the norm of \cite{KKL}. This is explained in further detail in remark \ref{weights remark} below. 
\end{remark}
Next, we recall the definition of local spectral expansion given by the author in \cite{Opp-LocalSpectral} (a reader not familiar with the definition of links can find it in subsection \ref{Links and spectral gaps subsection} below):
\begin{definition}
A  pure $n$-dimensional simplicial complex is said to have $\lambda$-local spectral expansion, where $\frac{n-1}{n} < \lambda \leq 1$ if both of the following hold:
\begin{enumerate}
\item All the links of $X$ of dimension $>0$ are connected. 
\item All the $1$-dimensional links have spectral gap at least $\lambda$ (the $1$-dimensional links are the links of the $(n-2)$-simplices). 
\end{enumerate}
\end{definition}

Our main results giving isoperimetric inequalities for $n$-dimensional affine buildings with large local spectral expansion. For $1$-cochains an isoperimetric inequality can be deduced based on the local spectral gap alone:
\begin{theorem}
\label{isoperimetric inequalities theorem - introduction, k=1}
There is a constant $\theta <1$ such that for every pure $n$-dimensional simplicial complex with $\lambda$-local spectral gap with $\lambda \geq \theta$, we have that for every $\phi \in C^1 (X, \mathbb{F}_2)$, if $\Vert \phi \Vert \leq 12 C_1$ and $\phi$ is minimal, then $\Vert d \phi \Vert \geq \frac{1}{4} \Vert \phi \Vert$. 
\end{theorem}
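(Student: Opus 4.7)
The plan is to run a local-to-global argument through the $1$-dimensional links, exploiting that the local spectral gap $\lambda \geq \theta$ is close to $1$. For each vertex $v$, introduce the link restriction $\phi_v \in C^0(X_v,\mathbb{F}_2)$ defined by $\phi_v(u) = \phi(\{v,u\})$, and the transverse restriction $\hat\phi_v \in C^1(X_v,\mathbb{F}_2)$ obtained by evaluating $\phi$ on those edges $\{u,w\}$ of $X$ which form a triangle with $v$. Unwinding the definition of $d$ gives the key identity
\[
d\phi(\{v,u,w\}) \;=\; d_{X_v}\phi_v(\{u,w\}) + \hat\phi_v(\{u,w\}) \pmod 2,
\]
so the restriction of $d\phi$ to triangles through $v$ is, inside the link, the symmetric difference of $d_{X_v}\phi_v$ with $\hat\phi_v$.

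Next I would exploit minimality of $\phi$ against the particular $0$-cochain $\mathbf 1_v$. Since $d\mathbf 1_v$ flips exactly the edges through $v$, minimality gives $\|\phi_v\|_{X_v} \leq \tfrac{1}{2} m_{X_v}(X_v^{(0)})$, i.e.\ $\phi_v$ has at most half mass in the link. Combining this with the assumption that $X_v$ is a connected graph with spectral gap at least $\lambda$, the weighted expander mixing / Cheeger inequality in the link yields
\[
\|d_{X_v}\phi_v\|_{X_v} \;\geq\; \lambda \, \|\phi_v\|_{X_v}\!\left(1 - \tfrac{\|\phi_v\|_{X_v}}{m_{X_v}(X_v^{(0)})}\right) \;\geq\; \tfrac{\lambda}{2}\,\|\phi_v\|_{X_v}.
\]

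Averaging over all vertices $v$ with their natural $m$-weights, the standard double counting identities
\[
\sum_v m(\{v\})\,\|\phi_v\|_{X_v} \;\propto\; \|\phi\|, \qquad \sum_v m(\{v\})\,\|d_{X_v}\phi_v + \hat\phi_v\|_{X_v} \;\propto\; \|d\phi\|,
\]
together with the identity of the first paragraph and the triangle inequality, reduce matters to showing that $\sum_v m(\{v\})\|d_{X_v}\phi_v\|_{X_v}$ beats $\sum_v m(\{v\})\|\hat\phi_v\|_{X_v}$ by the required margin. The first sum is at least $\tfrac{\lambda}{2}$ times the total link mass of $\phi$, which is a constant multiple of $\|\phi\|$, and hence gives a lower bound of the correct shape.

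The main obstacle is bounding the error $\sum_v m(\{v\})\|\hat\phi_v\|_{X_v}$: by double counting, each $\phi$-edge $\{u,w\}$ contributes via every $v$ that forms a triangle with it, so this sum is a priori of the same order as the main term and cannot be absorbed by a naive inequality. This is exactly the point at which the hypothesis $\|\phi\|\leq 12C_1$ and the affine-building structure must enter, via the building-theoretic constant $C_1$ controlling how the support of a small cochain spreads over local neighborhoods; a refined counting argument should then show that when $\|\phi\|$ is below the threshold, the error is bounded by a definite fraction of the main term. Choosing $\theta$ close enough to $1$ so that $\lambda/2$ exceeds this fraction with room to produce the factor $\tfrac14$ completes the proof.
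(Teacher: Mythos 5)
Your localization identity, the half-mass bound from minimality, and the idea of applying Cheeger in each vertex link all match the paper's starting point (Proposition \ref{F_2 norm from links proposition}, Remark \ref{epsilon locally minimal implies epsilon very locally minimal}, and Lemma \ref{F_2 differential norm and (k-1)-localization proposition}). But you have misdiagnosed where the argument actually gets stuck, and you are missing the idea that closes it. Your ``error term'' $\sum_v m(\{v\})\|\hat\phi_v\|_{X_v}$ is in fact not the problem: since $m_{\{v\}}(\{u,w\}) = m(\{v,u,w\})$, a direct double count gives $\sum_v \|\hat\phi_v\|_{X_v} = \|\phi\|$ exactly, and the paper's Proposition \ref{F_2 differential norm and 0-localization proposition} even sharpens the naive triangle inequality to $\|d\phi\| + \|\phi\| \geq \sum_v \|d_{\{v\}}\phi_{\{v\}}\|$. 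The real problem is that your main term is too small: the only-half-mass Cheeger bound gives $\|d_{\{v\}}\phi_{\{v\}}\| \geq \tfrac{\lambda}{2}\|\phi_{\{v\}}\|$, whose sum is $\lambda\|\phi\|$, and plugging into $\|d\phi\| \geq \sum_v \|d_{\{v\}}\phi_{\{v\}}\| - \|\phi\|$ yields $\|d\phi\| \geq (\lambda - 1)\|\phi\|$, which is negative. No sharpening of the error estimate rescues this.

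The missing idea is the thick/thin dichotomy (Lemma \ref{thin has laplacian inequality} and Lemma \ref{small implies thin lemma}). One partitions the vertices according to whether $\|\phi_{\{v\}}\|$ exceeds $\delta\, m(\{v\})$ or not; for a $\delta$-thin vertex, Cheeger gives the much stronger bound $\|d_{\{v\}}\phi_{\{v\}}\| \geq \lambda(1-\delta)\|\phi_{\{v\}}\|$, nearly twice the half-mass estimate. The smallness hypothesis $\|\phi\| \leq C_1\, m(X^{(1)})$ then enters --- not through any building-theoretic constant, but through a second Cheeger argument on the $1$-skeleton of $X$ itself (Proposition \ref{Cheeger for weighted graphs proposition}, item 2): since $\|\phi\|$ is small, the set $S^1$ of thick vertices has small total weight, so the edge mass $m(S^1,S^1)$ internal to $S^1$ is negligible, and hence almost all of $\|\phi\|$ is carried by edges with at least one thin endpoint, making $\phi$ ``$(\tfrac12 - \varepsilon_1, \delta)$-thin''. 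Feeding this into the refined Cheeger bound pushes the main term up to roughly $\tfrac{3\lambda}{2}\|\phi\|$, and then $\tfrac{3\lambda}{2} - 1 \to \tfrac12$ as $\lambda \to 1$ supplies the required factor $\tfrac14$. Your final appeal to ``affine-building structure'' is also a misstep: this theorem is stated and proved for arbitrary pure $n$-dimensional complexes with large local spectral gap; the building hypotheses only enter later, for the $k=2$ isoperimetric inequality, and there to supply coboundary expansion of vertex links, not anything about how small cochains spread.
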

However, in order to deduce topological overlapping, we need similar isoperimetric inequalities for $2$-cochains. Currently (when writing this article), we do not know how to prove such inequalities using spectral properties alone and therefore we'll have to add an assumption regarding the coboundary expansion of the links of vertices:
\begin{theorem}
For every $n >2$, $\epsilon >0$, there are constants $\frac{n-1}{n} < \Lambda_2 <1, C_2 >0$ such that for every pure $n$-simplicial complex $X$ of dimension $n >2$ with $\lambda$-local spectral expansion, if $\lambda \geq \Lambda_2$ and if for every $\lbrace v \rbrace \in X^{(0)}$
$$\min \left\lbrace \dfrac{\Vert d \phi \Vert}{\min_{\psi \in B^1 (X_{\lbrace v \rbrace}, \mathbb{F}_2)} \Vert \phi - \psi \Vert} : \phi \in C^{1} (X_{\lbrace v \rbrace}, \mathbb{F}_2) \setminus B^1 (X_{\lbrace v \rbrace}, \mathbb{F}_2) \right\rbrace \geq \epsilon,$$
(where  $X_{\lbrace v \rbrace}$ it the link of $\lbrace v \rbrace$ - see further explanation below), we have for every $\phi \in C^2 (X,\mathbb{F}_2)$ that 
$$\left( \phi \text{ is minimal and } \Vert \phi \Vert \leq 24 C_2 \right) \Rightarrow \Vert d \phi \Vert \geq \dfrac{3 \epsilon}{10} \Vert \phi \Vert.$$
\end{theorem}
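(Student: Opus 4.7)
The plan is to bootstrap from the $1$-cochain isoperimetric inequality stated above by localizing $\phi$ to vertex links, in the spirit of the cone-type arguments in [KKL]. For each vertex $v$, define the localization $\phi_v \in C^1(X_v, \mathbb{F}_2)$ by $\phi_v(\tau) = \phi(\{v\} \cup \tau)$ for $\tau \in X_v^{(1)}$. A routine computation in $\mathbb{F}_2$ yields the key identity
\[
d_{X_v}\phi_v(\sigma) \;=\; (d\phi)(\{v\} \cup \sigma) + \phi(\sigma), \qquad \sigma \in X_v^{(2)}.
\]
The weighting $m$ is chosen so that summing over vertices gives $\sum_v \|\phi_v\|_{X_v}$ equal to a fixed multiple of $\|\phi\|$ (and analogously for $\sum_v\|d_{X_v}\phi_v\|_{X_v}$ in terms of $\|d\phi\|$), which lets us pass between local estimates in the links and global estimates on $X$.

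Split the vertex set as $V = V_1 \sqcup V_2$, where $V_1 = \{v : \phi_v \in B^1(X_v,\mathbb{F}_2)\}$. For $v \in V_2$ the link coboundary expansion hypothesis gives
\[
\|d_{X_v}\phi_v\| \;\geq\; \epsilon \min_{\psi \in B^1(X_v,\mathbb{F}_2)} \|\phi_v - \psi\|,
\]
which combined with the triangle inequality $\|\phi_v\| \leq \|\phi_v - \psi\| + \|\psi\|$ yields a lower bound on $\|d_{X_v}\phi_v\|$ in terms of $\|\phi_v\|$. For $v \in V_1$, write $\phi_v = d_{X_v}\xi_v$ for some $\xi_v \in C^0(X_v, \mathbb{F}_2)$; the goal here is to show that the aggregate $\sum_{v \in V_1}\|\phi_v\|$ must be small, because otherwise the collection $\{\xi_v\}$ can be patched into a global $1$-cochain $\xi \in C^1(X, \mathbb{F}_2)$ with $\|\phi + d\xi\| < \|\phi\|$, contradicting the minimality of $\phi$. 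The preceding $1$-cochain isoperimetric inequality is applied to an auxiliary $1$-cochain on $X$ built from the $\xi_v$'s; the conditions $\Lambda_2 \geq \theta$ and $\|\phi\| \leq 24 C_2$ (with $C_2$ small enough) ensure that the hypotheses of that inequality are satisfied.

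Summing the $V_1$ and $V_2$ estimates and using the sum-to-global identities from the first paragraph then yields $\|d\phi\| \geq \frac{3\epsilon}{10}\|\phi\|$, where the constant $\frac{3}{10}$ absorbs a factor $\frac{1}{2}$ from separating the two vertex classes together with further multiplicative losses from the triangle inequality and from the $1$-cochain isoperimetric constant $\frac{1}{4}$. The main obstacle I anticipate is the $V_1$ patching step: the local primitives $\xi_v$ are defined only modulo a $0$-cochain kernel on each connected component of $X_v$, and gluing them along shared $1$-simplices introduces error terms which must be bounded in the restricted regime $\|\phi\| \leq 24 C_2$. Controlling these errors while simultaneously ensuring that the resulting global $1$-cochain $\xi$ actually decreases $\|\phi\|$ by a definite amount will be the crux of the argument.
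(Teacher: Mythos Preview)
Your proposal has a genuine gap at the step you treat as routine: the passage from $\sum_v \|d_{X_v}\phi_v\|$ back to $\|d\phi\|$. The identity $d_{X_v}\phi_v(\sigma) = (d\phi)(\{v\}\cup\sigma) + \phi(\sigma)$ is an equation in $\mathbb{F}_2$, not in $\mathbb{R}$, so when you take norms and sum over vertices you do \emph{not} get that $\sum_v \|d_{X_v}\phi_v\|$ is a fixed multiple of $\|d\phi\|$. What one actually obtains (see the paper's Proposition~4.3) is only the one-sided inequality
\[
2\,\|d\phi\| + \|\phi\| \;\geq\; \sum_{\{v\}\in X^{(0)}} \|d_{X_v}\phi_v\|.
\]
Even granting $\|d_{X_v}\phi_v\|\geq \epsilon\|\phi_v\|$ for every $v$ (which, as explained below, you do get for free), summing gives $2\|d\phi\|\geq(3\epsilon-1)\|\phi\|$, which is vacuous once $\epsilon<1/3$. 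The additive $\|\phi\|$ on the left is the whole difficulty, and nothing in your outline addresses it; in particular the hypothesis $\|\phi\|\leq 24C_2$ does not help here since the desired conclusion is homogeneous of the form $\|d\phi\|\geq c\|\phi\|$.

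A secondary point: your $V_1/V_2$ split is trivial and the ``patching'' problem you flag as the crux never arises. Minimality of $\phi$ implies local minimality, i.e.\ each $\phi_v$ is minimal in $C^1(X_v,\mathbb{F}_2)$; hence if $\phi_v\in B^1(X_v,\mathbb{F}_2)$ then $\|\phi_v\|\leq\|\phi_v-\phi_v\|=0$, so $\phi_v=0$. Thus $V_1=\{v:\phi_v=0\}$, there are no primitives $\xi_v$ to glue, and the link hypothesis already gives $\|d_{X_v}\phi_v\|\geq\epsilon\|\phi_v\|$ for all $v$. The paper's argument is not a $V_1/V_2$ split but a \emph{thick/thin dichotomy}: either most of the weight of $\phi$ sits over $\delta$-thin edges, in which case a pure spectral (Cheeger-type) estimate in the $1$-dimensional edge-links gives $\|d\phi\|\geq c\|\phi\|$ with no reference to the link coboundary expansion; or the weight concentrates on a small set $S^2$ of ``thick'' vertices, and a delicate counting of how $2$- and $3$-simplices meet $S^2$ (using the smallness hypothesis and the Cheeger inequality on the $1$-skeleton to bound $m(S^2,S^2)$) shows that
\[
\|d\phi\|\;\geq\;\sum_{\{v\}\in S^2}\|d_{X_v}\phi_v\| \;-\; O(\varepsilon_2)\sum_{\{v\}\in S^2}\|\phi_v\|,
\]
with a controllably small error replacing the fatal additive $\|\phi\|$. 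Only then is the link coboundary expansion invoked. This error-control step is the heart of the proof and is absent from your plan.
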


Using a result stated from \cite{KKL} (see theorem \ref{overlapping from KKL} below), we use (a slightly more general version) of the above isoperimetric inequalities to deduce the following topological overlapping result:

\begin{theorem}
\label{topological overlapping - introduction}
Let $\widetilde{X}$ be an $n$ dimensional ($n>2$) affine building that arises from group $G$ with an affine  $BN$-pair constructed over a non-archimedean local field $F$. Denote the thickness of $\widetilde{X}$ by $t$.  Let $\Gamma$ be a subgroup acting on $\widetilde{X}$ simplicially and cocompactly, such that for every vertex $v$ of $\widetilde{X}$ we have that
$$\forall g \in \Gamma, d(g.v, v)>2.$$
Then for $t$ large enough, there is a constant $c >0$ that depends only on $t$ and the type of $\widetilde{X}$ (i.e., on the Weyl group of $\widetilde{X}$), but not on $\Gamma$, such that the $2$-skeleton of $\widetilde{X} / \Gamma$ has $c$-topological overlapping property. 
\end{theorem}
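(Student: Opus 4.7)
The plan is to verify, for the quotient $X = \widetilde{X}/\Gamma$, the hypotheses of the two isoperimetric inequalities stated above, and then feed those inequalities into the Kaufman--Kazhdan--Lubotzky criterion (Theorem \ref{overlapping from KKL}) to extract the topological overlapping constant $c$ for the $2$-skeleton of $X$. The freeness-type assumption $d(g.v,v)>2$ for every vertex $v$ and every non-trivial $g \in \Gamma$ ensures that $\Gamma$ acts without inversions and that the link in $X$ of every simplex $\sigma$ is canonically isomorphic to the link in $\widetilde{X}$ of any lift of $\sigma$. Consequently every vertex link and every $1$-dimensional link of $X$ is a spherical building whose thickness is bounded below in terms of $t$ and whose Weyl type is determined by the finite Weyl group of $\widetilde{X}$.

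First I would verify the spectral input. Each $1$-dimensional link is a generalized $m$-gon of thickness at least $t$, and the normalized adjacency spectra of generalized polygons are explicitly known; the non-trivial eigenvalues decay to $0$ as $t \to \infty$. Therefore, choosing $t$ sufficiently large (depending only on $n$ and on the residue types that can appear), we obtain $\lambda$-local spectral expansion on $X$ with $\lambda$ as close to $1$ as we wish, and in particular $\lambda \geq \Lambda_2$, which is the spectral input required by both Theorem \ref{isoperimetric inequalities theorem - introduction, k=1} and the second isoperimetric theorem.

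The step I expect to be the main obstacle is supplying a uniform lower bound $\epsilon > 0$ on the coboundary expansion constant
$$\min \left\lbrace \dfrac{\Vert d \phi \Vert}{\min_{\psi \in B^1 (X_{\lbrace v \rbrace}, \mathbb{F}_2)} \Vert \phi - \psi \Vert} : \phi \in C^{1} (X_{\lbrace v \rbrace}, \mathbb{F}_2) \setminus B^1 (X_{\lbrace v \rbrace}, \mathbb{F}_2) \right\rbrace$$
uniformly over the vertices $\lbrace v \rbrace$ of $X$. Since each $X_{\lbrace v \rbrace}$ is an $(n-1)$-dimensional spherical building with $n-1 \geq 2$, this is an $\mathbb{F}_2$-coboundary-expansion statement for spherical buildings of rank $\geq 2$. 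My plan is to combine the known simply-connectedness and cohomological vanishing of irreducible spherical buildings of rank $\geq 2$ with a quantitative Garland/cone-type argument, in the spirit of the $\widetilde{A}_n$ case treated in \cite{KKL}, to produce an $\epsilon$ that depends only on the Weyl type of $\widetilde{X}$ and on $t$. Note that $\epsilon$ must be fixed before invoking the second isoperimetric theorem, since the threshold $\Lambda_2$ there depends on $\epsilon$; once $\epsilon$ is in hand, $\Lambda_2$ becomes a concrete number, and the required spectral bound $\lambda \geq \Lambda_2$ can be arranged by further enlarging $t$.

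With all three inputs in hand, Theorem \ref{isoperimetric inequalities theorem - introduction, k=1} controls minimal $1$-cochains of bounded norm on $X$ and the second isoperimetric theorem controls minimal $2$-cochains of bounded norm on $X$. Plugging the slightly more general versions of these estimates into the KKL criterion for topological overlapping of the $2$-skeleton yields a constant $c > 0$ such that the $2$-skeleton of $X = \widetilde{X}/\Gamma$ has $c$-topological overlapping. By construction $c$ depends only on $\epsilon$, on $\Lambda_2$ and on the universal constants in the KKL criterion, hence only on $t$ and on the Weyl type of $\widetilde{X}$, and not on the particular choice of $\Gamma$.
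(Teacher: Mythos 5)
Your overall framework matches the paper's: verify $\lambda$-local spectral expansion (via the explicitly known spectra of generalized polygons, which are the $1$-dimensional links), supply a uniform lower bound on $\epsilon_1$ for the vertex links (which are spherical buildings), and feed both into the isoperimetric-to-topological-overlapping machinery. You also correctly observe that the freeness hypothesis $d(g.v,v)>2$ forces the links of $\widetilde{X}/\Gamma$ to be isomorphic to links of $\widetilde{X}$. However, there are two concrete gaps.

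The first, and more serious, concerns the coboundary expansion of the vertex links. You identify this as the main obstacle, which is right, but then propose to derive it from ``the known simply-connectedness and cohomological vanishing of irreducible spherical buildings of rank $\geq 2$ with a quantitative Garland/cone-type argument.'' This is where the paper's proof actually rests on a citation you are missing: it invokes \cite{LMM}[Corollary 3.6], which gives the explicit bound $\epsilon_k (Y) \geq \left({n+1 \choose k+2}^2 \vert W \vert \right)^{-1}$ for any spherical building $Y$ arising from a $BN$-pair, and this bound is \emph{independent of the thickness} $t$. Your sketch is problematic on two counts. (i) Garland's method is a spectral argument over $\mathbb{R}$; it does not produce $\mathbb{F}_2$-coboundary expansion, which is a combinatorial/isoperimetric quantity, not a spectral gap. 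The cone-radius argument (which is what \cite{LMM} actually runs) is the correct tool, but you never develop it, and vanishing of $H^1(Y,\mathbb{F}_2)$ alone gives no quantitative bound. (ii) You allow $\epsilon$ to depend on $t$, and this creates a circularity you do not resolve: $\Lambda_2$ depends on $\epsilon$, so if $\epsilon(t)$ degrades as $t$ grows, the threshold $\Lambda_2(\epsilon(t))$ could chase the improving local spectral gap $\lambda(t)$ and the inequality $\lambda \geq \Lambda_2$ might never be achieved. The thickness-independence of the \cite{LMM} bound is exactly what breaks this loop: one fixes $\epsilon$ once and for all from the Weyl type, computes a fixed $\Lambda_2$, and then takes $t$ large enough.

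The second gap is that you never address condition (3) of Theorem \ref{Topological overlap for 2-skeletons}, namely the bound
$$\dfrac{\sup_{\sigma \in X^{(2)}} \vert \lbrace \eta \in X^{(n)} : \sigma \subset \eta \rbrace \vert }{\inf_{\sigma \in X^{(2)}} \vert \lbrace \eta \in X^{(n)} : \sigma \subset \eta \rbrace \vert } \leq M .$$
This is the hypothesis that permits Lemma \ref{topological overlap of the n-1 skeleton based on isoperimetric ineq} to transfer norm inequalities from the $n$-dimensional weight $\overline{m_h}$ to the intrinsic weight on the $2$-skeleton before applying the KKL criterion. For affine buildings this ratio is controlled by the Weyl type and the residue field parameter (and is exactly $1$ in type $\widetilde{A}_n$), but it must be checked; without it the passage to the $2$-skeleton is not justified.
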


\begin{corollary}
Let $\widetilde{X}$ as in the above theorem with large thickness. Then for a sequence of groups $\Gamma_i$ that act on $\widetilde{X}$ simplicially and cocompactly with the condition stated in the above theorem. Let $Y_i = \widetilde{X} / \Gamma_i$ and let $X_i$ be the $2$-skeleton of $Y_i$. Then $\lbrace X_i \rbrace$ is a family of topological expanders.
\end{corollary}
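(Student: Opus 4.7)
The plan is that this corollary should follow essentially immediately from the preceding theorem, since the definition of a family of topological expanders requires only the existence of a uniform constant $c>0$ such that each member has $c$-topological overlapping. First I would fix $\widetilde{X}$ together with its thickness $t$ (chosen large enough that the hypothesis of the theorem is satisfied), and observe that every $\Gamma_i$ in the given sequence acts simplicially and cocompactly on this same $\widetilde{X}$ and satisfies the injectivity-radius condition $d(g.v,v)>2$ for all $g\in\Gamma_i$ and all vertices $v$.

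Therefore, applying Theorem \ref{topological overlapping - introduction} to each index $i$ separately, I would obtain for each $i$ a constant $c_i>0$ such that the $2$-skeleton $X_i$ of $\widetilde{X}/\Gamma_i$ has $c_i$-topological overlapping. The crucial point, which is already built into the statement of the theorem, is that $c_i$ depends only on $t$ and on the Weyl group of $\widetilde{X}$, and not on the particular subgroup $\Gamma_i$. Since $\widetilde{X}$ and $t$ are the same for every index, all the $c_i$ can be taken equal to a single constant $c=c(t,\widetilde{X})>0$.

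With this uniform $c$ in hand, the definition of a family of topological expanders is satisfied verbatim by $\{X_i\}$, completing the proof. There is no real obstacle here; the only thing to verify is that the hypotheses of the theorem (in particular the existence of the uniform spectral and coboundary bounds needed en route, as encoded in the statement) are met for every $\Gamma_i$, which is guaranteed because those bounds are properties of $\widetilde{X}$ alone and pass to every quotient $\widetilde{X}/\Gamma_i$ satisfying the stated injectivity-radius condition.
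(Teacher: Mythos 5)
Your argument is correct and is exactly the intended one: the paper states the corollary without proof, treating it as immediate from the theorem since the overlapping constant $c$ depends only on $t$ and the Weyl group, which are fixed across the whole sequence $\lbrace \Gamma_i \rbrace$. Nothing further is needed.
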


\begin{remark}
Explicit examples of affine buildings $\widetilde{X}$ and $\Gamma_i$'s as in the corollary are the Ramanujan complexes constructed in \cite{LSV}.
\end{remark}

\begin{remark}
As noted above, it is our hope to prove the isoperimetric inequalities stated above based on spectral information only, i.e., for a general simplicial complex with large enough local spectral expansion without assuming the complex to be an affine building. The question how to do so (or if this can be done at all) is left for future study.
\end{remark}

\textbf{Structure of this article:} In section 2, we review basic definitions and results about weighted graphs, weighted complex and links. In section 3, we review different notions of high dimensional expansion considered in the article. In section 4, we prove some new technical results connecting the norm of $\mathbb{F}_2$ cochains to the norms of the localizations. In section 5, we discuss different notions of minimality for $\mathbb{F}_2$ cochains. In section 6, we discuss criteria to topological expansion and show how to drive topological expansion from (certain types of) isoperimetric inequalities.  In section 7, we prove the isoperimetric inequalities stated above (which are the main results of this article). Finally, in section 8, we derive topological overlapping for the $2$-skeleton of affine building. In the interest of readers not used to the weighted setting, we added an appendix covering basic results (such as the Cheeger inequality) for weighted graphs.

\section{Background definitions and results}
This section is aimed to provide background results that we shall need in order to prove our main theorems. 
\subsection{Weighted graphs and Cheeger inequalities}
We shall provide the basic definitions on weighted graphs, graph Laplacians and state (without proof) some Cheeger inequalities for weighted graphs. The interested reader can find proofs and a more complete discussion regarding the definitions in the appendix. \\
For a graph $G=(V,E)$, a weight function is a function $m: V \cup E \rightarrow \mathbb{R}^+$ such that 
$$\forall v \in V, m (v) = \sum_{e \in E, v \in e} m(e) .$$ 
Denote by $C^0 (G, \mathbb{R})$ the space:
$$C^0 (G, \mathbb{R} ) = \lbrace \phi : V \rightarrow \mathbb{R} \rbrace$$
Let $\Delta^+ : C^0 (G, \mathbb{R} ) \rightarrow C^0 (G, \mathbb{R} )$ be the graph Laplacian with respect to the weight function $m$ which is defined as follows:
$$\Delta^+ \phi (v) =  \phi (v) - \dfrac{1}{m(v)} \sum_{u \in V, \lbrace u,v \rbrace \in E} m (\lbrace u,v \rbrace) \phi (u) .$$
The Laplacian is a positive operator and if $G$ is connected, only constant functions have the eigenvalue $0$. For a connected graph $G$, we denote by $\lambda (G)$ the smallest positive eigenvalue. Next, we state the following Cheeger-type inequalities (the reader can find the proofs in the appendix): 
\begin{proposition}
\label{Cheeger for weighted graphs proposition}
Let $G = (V,E)$ be a connected graph. We introduce the following notations:
For $\emptyset \neq U \subseteq V$ denote 
$$m (U) = \sum_{v \in U} m (v) .$$
For $\emptyset \neq U_1, U_2 \subseteq V$ denote
$$m(U_1,U_2 ) = \sum_{u_1 \in U_1, u_2 \in U_2,  \lbrace u_1, u_2 \rbrace \in E} m (\lbrace u_1, u_2 \rbrace ).$$
Then:
\begin{enumerate}
\item (Cheeger inequality) For every $\emptyset \neq U \subsetneq V$, we have that
$$m (U, V \setminus U) \geq \lambda (G) \dfrac{m (U) m (V \setminus U)}{m(V)} .$$
\item For every $\emptyset \neq U \subsetneq V$, we have that
$$\dfrac{m(U)}{2}  \left(1 - \lambda (G) \frac{m(V \setminus U)}{m(V)} \right) \geq m (U,U)  .$$
\end{enumerate}
\end{proposition}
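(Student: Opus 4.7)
My plan is to reduce both inequalities to the Rayleigh quotient characterization of $\lambda(G)$. The first step is to rewrite the quadratic form of $\Delta^+$ in the weighted inner product $\langle \phi, \psi \rangle = \sum_v m(v) \phi(v) \psi(v)$. Using the defining identity $m(v) = \sum_{e \ni v} m(e)$, a direct expansion yields the standard Dirichlet form
$$\langle \Delta^+ \phi, \phi \rangle = \sum_{\{u,v\} \in E} m(\{u,v\})(\phi(u) - \phi(v))^2.$$
Because $G$ is connected, $\ker \Delta^+$ is spanned by the constant function, and the min-max theorem gives
$$\lambda(G) = \min\left\{\frac{\langle \Delta^+ \phi, \phi \rangle}{\langle \phi, \phi \rangle} : \phi \neq 0, \; \sum_v m(v) \phi(v) = 0\right\}.$$

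For part 1, I would test this minimum against the centered indicator $\phi = \mathbbm{1}_U - m(U)/m(V)$, which is orthogonal to constants by construction. A short computation shows $\phi \equiv m(V \setminus U)/m(V)$ on $U$ and $\phi \equiv -m(U)/m(V)$ on $V \setminus U$, so
$$\langle \phi, \phi \rangle = \frac{m(U) m(V \setminus U)}{m(V)},$$
while in the Dirichlet form only cut edges contribute, with $(\phi(u)-\phi(v))^2 = 1$, giving $\langle \Delta^+ \phi, \phi \rangle = m(U, V \setminus U)$. Plugging these into $\lambda(G)\langle \phi, \phi \rangle \leq \langle \Delta^+ \phi, \phi \rangle$ and rearranging yields part 1.

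For part 2, I would combine part 1 with the book-keeping identity $m(U) = 2 m(U, U) + m(U, V \setminus U)$, obtained by writing $m(U) = \sum_{v \in U} \sum_{u: \{u,v\} \in E} m(\{u,v\})$ and separating the inner sum according to whether $u \in U$ (so each internal edge contributes $m(\{u,v\})$ twice, once from each endpoint) or $u \in V \setminus U$ (so each cut edge contributes once). Substituting the Cheeger lower bound from part 1 into $2 m(U, U) = m(U) - m(U, V \setminus U)$ gives
$$2 m(U, U) \leq m(U)\left(1 - \lambda(G)\frac{m(V \setminus U)}{m(V)}\right),$$
which after division by $2$ is exactly the stated inequality.

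Neither step is a genuine obstacle; the only item requiring care is fixing the combinatorial convention for $m(U, U)$ at the outset, since the factor $\tfrac{1}{2}$ in part 2 relies on each internal edge being counted once by $m(U, U)$ (while each cut edge is counted once by $m(U, V \setminus U)$). Once that is pinned down, the argument is the standard weighted Cheeger proof.
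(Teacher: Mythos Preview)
Your proposal is correct and follows essentially the same approach as the paper: both test the Rayleigh quotient against a function that is constant on $U$ and on $V\setminus U$ and orthogonal to $\mathbbm{1}$ (your centered indicator is a scalar multiple of the paper's choice $\phi=\tfrac{1}{m(U)}$ on $U$, $\phi=-\tfrac{1}{m(V\setminus U)}$ on $V\setminus U$), and both derive part~2 from part~1 via the identity $m(U)=2m(U,U)+m(U,V\setminus U)$. Your explicit remark about the convention for $m(U,U)$ is apt, since the paper's definition is literally a sum over ordered pairs but is used as a sum over edges.
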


\subsection{Weighted simplicial complexes}
Let $X$ be a pure $n$-dimensional finite simplicial complex. For $-1 \leq k \leq n$, we denote $X^{(k)}$ to be the set of all $k$-simplices in $X$ ($X^{(-1)} = \lbrace \emptyset \rbrace$). A weight function $m$ on $X$ is a function:
$$m : \bigcup_{-1 \leq k \leq n} X^{(k)} \rightarrow \mathbb{R}^+,$$
such that for every $-1 \leq k \leq n-1$ and for every $\tau \in X^{(k)}$ we have that
$$m (\tau) = \sum_{\sigma \in X^{(k+1)}} m(\sigma).$$
By its definition, it is clear the $m$ is determined by the values in takes in $X^{(n)}$. A simplicial complex with a weight function will be called a weighted simplcial complex. \\

\begin{proposition}
\label{weight in n dim simplices}
For every $-1 \leq k \leq n$ and every $\tau \in X^{(k)}$ we have that
$$m (\tau) =(n-k)!  \sum_{\sigma \in X^{(n)}, \tau \subseteq \sigma} m (\sigma ),$$
where $\tau \subseteq \sigma$ means that $\tau$ is a face of $\sigma$. 
\end{proposition}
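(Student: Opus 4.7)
The plan is a downward induction on $k$, starting at $k = n$ and descending to $k = -1$. The base case $k = n$ is immediate: $(n-k)! = 1$ and the only $n$-simplex containing a given $\tau \in X^{(n)}$ is $\tau$ itself, so both sides reduce to $m(\tau)$.

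For the inductive step, suppose the identity has been established at level $k+1$. The defining recursion of $m$ gives
\[
m(\tau) \;=\; \sum_{\sigma' \in X^{(k+1)},\, \tau \subset \sigma'} m(\sigma'),
\]
and substituting the inductive hypothesis yields
\[
m(\tau) \;=\; (n-k-1)!\, \sum_{\sigma' \in X^{(k+1)},\, \tau \subset \sigma'}\; \sum_{\sigma \in X^{(n)},\, \sigma' \subseteq \sigma} m(\sigma).
\]
The key move is to swap the order of summation and count, for each $\sigma \in X^{(n)}$ with $\tau \subseteq \sigma$, the number of intermediate simplices $\sigma' \in X^{(k+1)}$ satisfying $\tau \subsetneq \sigma' \subseteq \sigma$. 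Such a $\sigma'$ is obtained by adjoining exactly one vertex of $\sigma \setminus \tau$ to $\tau$, and since $|\sigma \setminus \tau| = n-k$ there are precisely $n-k$ choices. Hence the double sum collapses to $(n-k) \sum_{\sigma \in X^{(n)},\, \tau \subseteq \sigma} m(\sigma)$, producing the coefficient $(n-k)(n-k-1)! = (n-k)!$ and closing the induction.

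This is essentially a combinatorial bookkeeping argument, so no real obstacle is anticipated; the only point that needs care is the count of intermediate $\sigma'$ between $\tau$ and $\sigma$ with $\dim \sigma' = k+1$, which is evident from the cardinalities of the underlying vertex sets. The argument applies uniformly down to $k = -1$, where $\tau = \emptyset$ is contained in every $n$-simplex and the formula recovers the total weight $m(\emptyset) = n! \sum_{\sigma \in X^{(n)}} m(\sigma)$.
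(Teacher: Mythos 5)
Your proof is correct and follows essentially the same route as the paper: a downward induction on $k$ with trivial base case $k=n$, applying the defining recursion of $m$, substituting the inductive hypothesis, swapping the order of summation, and counting the $n-k$ intermediate $(k+1)$-simplices between $\tau$ and a fixed $n$-simplex $\sigma$.
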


\begin{proof}
The proof is by induction. For $k=n$ this is obvious. Assume the equality is true for $k+1$, then for $\tau \in X^{(k)}$ we have
\begin{dmath*}
m( \tau ) = \sum_{\sigma \in X^{(k+1)}, \tau \subset \sigma} m(\sigma ) = \sum_{\sigma \in X^{(k+1)}, \tau \subset \sigma} (n-k-1)! \sum_{\eta \in X^{(n)}, \sigma \subset \eta} m(\eta) = (n-k) (n-k-1)! \sum_{\eta \in X^{(n)}, \tau \subset \eta} m(\eta)=   (n-k)! \sum_{\eta \in X^{(n)}, \tau \subset \eta} m(\eta) .
\end{dmath*}
\end{proof}

\begin{corollary}
\label{weight in l dim simplices}
For every $-1 \leq k < l \leq n$ and every $\tau \in X^{(k)}$ we have
$$ m(\tau) =(l-k)! \sum_{\sigma \in X^{(l)}, \tau \subset \sigma} m(\sigma) .$$
\end{corollary}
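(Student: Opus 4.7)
The plan is to reduce to Proposition \ref{weight in n dim simplices}, which already handles the case $l=n$, by applying it twice: once to $\tau$ directly, and once to each $l$-face $\sigma$ containing $\tau$, and then interchanging the order of summation. So first I would write
\[
m(\tau) = (n-k)! \sum_{\eta \in X^{(n)},\, \tau \subseteq \eta} m(\eta)
\]
using Proposition \ref{weight in n dim simplices}, and separately
\[
m(\sigma) = (n-l)! \sum_{\eta \in X^{(n)},\, \sigma \subseteq \eta} m(\eta)
\]
for each $\sigma \in X^{(l)}$ with $\tau \subset \sigma$. Summing the second identity over all such $\sigma$ and swapping the order of summation yields
\[
\sum_{\sigma \in X^{(l)},\, \tau \subset \sigma} m(\sigma) = (n-l)! \sum_{\eta \in X^{(n)},\, \tau \subseteq \eta} N(\tau,\eta)\, m(\eta),
\]
where $N(\tau,\eta)$ is the number of $\sigma \in X^{(l)}$ with $\tau \subset \sigma \subset \eta$. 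A purely combinatorial count gives $N(\tau,\eta) = \binom{n-k}{l-k}$ (pick the $l-k$ vertices of $\sigma \setminus \tau$ among the $n-k$ vertices of $\eta \setminus \tau$), so the right-hand side becomes $(n-l)!\binom{n-k}{l-k} \sum_\eta m(\eta) = \tfrac{(n-k)!}{(l-k)!} \sum_\eta m(\eta)$. Substituting back the first identity gives the claimed formula.

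Alternatively (and perhaps more elegantly), I would mimic the induction proof of Proposition \ref{weight in n dim simplices} directly: induct on $l-k$, with the base case $l=k+1$ being exactly the defining property of the weight function, and the inductive step obtained by expanding each summand $m(\sigma)$ over its $(l{+}1)$-cofaces and absorbing the factor $l{+}1{-}k$ that comes from each $(l{+}1)$-simplex being counted once per $l$-face above $\tau$. There is no genuine obstacle here; the only delicate point is the combinatorial identity $N(\tau,\eta) = \binom{n-k}{l-k}$ (or, in the inductive version, the counting of $l$-faces above $\tau$ inside an $(l{+}1)$-face), which is routine but is the place where the factorial $(l-k)!$ is produced.
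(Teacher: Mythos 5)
Your first route is essentially the paper's own proof: it applies Proposition \ref{weight in n dim simplices} to each $\sigma \in X^{(l)}$ above $\tau$, interchanges the sums, counts $\binom{n-k}{l-k}$ choices of $\sigma$ between $\tau$ and a given $\eta \in X^{(n)}$, and then reconverts via the $k,n$ case of the same proposition. The count and the factorial bookkeeping match the paper's computation exactly.
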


\begin{proof}
For every $\sigma \in X^{(l)}$ we have
$$ m(\sigma ) = (n-l)! \sum_{\eta \in X^{(n)}, \sigma \subseteq \eta} m(\eta) .$$
Therefore
\begin{dmath*}
\sum_{\sigma \in X^{(l)}, \tau \subset \sigma} m(\sigma) = \sum_{\sigma \in X^{(l)}, \tau \subset \sigma} (n-l)! \sum_{\eta \in X^{(n)}, \sigma \subseteq \eta} m(\eta) = \dfrac{(n-k)!}{(l-k)! (n-k - (l-k) )! } (n-l)! \sum_{\eta \in X^{(n)}, \tau \subseteq \eta} m(\eta) = \dfrac{(n-k)!}{(l-k)!}  \sum_{\eta \in X^{(n)}, \tau \subseteq \eta} m(\eta) = \dfrac{1}{(l-k)!} m (\tau ) .
\end{dmath*}
\end{proof}

For $-1 \leq k \leq n$ and a set $\emptyset \neq U \subseteq X^{(k)}$, we denote 
$$m(U) = \sum_{\sigma \in U} m(\sigma ).$$

\begin{proposition}
\label{weight of X^k compared to X^l}
For every $-1 \leq k < l \leq n$, 
$$m(X^{(k)}) = \dfrac{(l+1)!}{(k+1)!} m (X^{(l)}). $$
\end{proposition}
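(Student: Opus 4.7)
The plan is to sum the identity from Corollary \ref{weight in l dim simplices} over all $\tau \in X^{(k)}$ and then exchange the order of summation. Concretely, I would start from
$$m(X^{(k)}) = \sum_{\tau \in X^{(k)}} m(\tau) = (l-k)! \sum_{\tau \in X^{(k)}} \sum_{\sigma \in X^{(l)}, \tau \subset \sigma} m(\sigma),$$
and then swap the two sums to obtain
$$m(X^{(k)}) = (l-k)! \sum_{\sigma \in X^{(l)}} m(\sigma) \cdot \bigl| \{\tau \in X^{(k)} : \tau \subset \sigma \} \bigr|.$$

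The only remaining input is that every $\sigma \in X^{(l)}$ has exactly $\binom{l+1}{k+1}$ faces of dimension $k$, since $\sigma$ has $l+1$ vertices and any $(k+1)$-element subset of them determines a $k$-face. Plugging this in gives
$$m(X^{(k)}) = (l-k)! \binom{l+1}{k+1} m(X^{(l)}) = (l-k)! \cdot \frac{(l+1)!}{(k+1)!(l-k)!}\, m(X^{(l)}) = \frac{(l+1)!}{(k+1)!}\, m(X^{(l)}),$$
which is the desired identity.

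There is really no obstacle here: everything reduces to a double counting after applying the previously established corollary. The only place one must be a little careful is in the edge cases $k = -1$ (where $X^{(-1)} = \{\emptyset\}$ and every $l$-simplex contains the empty face exactly once, matching $\binom{l+1}{0} = 1$) and $l = n$, both of which are consistent with the formula. One could alternatively prove the statement by induction on $l - k$, using the defining relation $m(\tau) = \sum_{\sigma \in X^{(k+1)}, \tau \subset \sigma} m(\sigma)$ and the fact that every $(k+1)$-simplex has exactly $k+2$ faces of dimension $k$, but the direct double-counting argument above is cleaner.
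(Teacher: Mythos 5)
Your proof is correct and follows essentially the same route as the paper: both sum the identity of Corollary \ref{weight in l dim simplices} over $\tau \in X^{(k)}$, exchange the order of summation, and use that each $l$-simplex has exactly $\binom{l+1}{k+1}$ faces of dimension $k$. The remarks on edge cases and the alternative induction are fine but not needed; the core argument matches the paper's.
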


\begin{proof}
By corollary \ref{weight in l dim simplices}, we have that
\begin{dmath*}
m(X^{(k)}) = \sum_{\tau \in X^{(k)}} m(\tau ) = \sum_{\tau \in X^{(k)}} (l-k)! \sum_{\sigma \in X^{(l)}, \tau \subset \sigma} m(\sigma) =\sum_{\sigma \in X^{(l)}} (l-k)! m(\sigma) \sum_{\tau \in X^{(k)}, \tau \subset \sigma} 1 = \sum_{\sigma \in X^{(l)}} (l-k)! m(\sigma) {l+1 \choose k+1} = \dfrac{(l+1)!}{(k+1)!} m(X^{(l)}) .
\end{dmath*}
\end{proof}

\begin{remark}
\label{1 skeleton of weighted complex is weighted graph - remark} 
Note that for every weighted complex $X$, the $1$-skeleton of $X$ is a weighted graph and all the results stated above for weighted graphs hold.
\end{remark} 

We would like to distinguish to following weight function $m_h$ which we call the homogeneous weight function (since it give the value $1$ to each $n$-dimensional simplex):
$$\forall \tau \in X^{(k)},   m_h (\tau) =  (n-k)! \vert \lbrace \eta \in X^{(n)} : \tau \subseteq \eta \rbrace \vert .$$
The next proposition shows that $m_h$ is indeed a weight function:
\begin{proposition}
For every $-1 \leq k \leq n-1$ and every $\tau \in X^{(k)}$ we have that 
$$m_h (\tau ) = \sum_{\sigma \in X^{(k+1)}, \tau \subset \sigma} m_h (\sigma).$$
\end{proposition}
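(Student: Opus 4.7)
The plan is to verify the identity directly by a double-counting / switch-the-order-of-summation argument on pairs $(\sigma, \eta)$ with $\tau \subset \sigma \subset \eta$, $\sigma \in X^{(k+1)}$, $\eta \in X^{(n)}$. This is essentially the same technique already used in the proof of Proposition \ref{weight in n dim simplices}, but carried out in one step rather than by induction.

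First I would unfold the definition of $m_h$ on the right-hand side, pulling out the constant $(n-k-1)!$, which leaves
\[
\sum_{\sigma \in X^{(k+1)},\, \tau \subset \sigma} m_h(\sigma) = (n-k-1)! \sum_{\sigma \in X^{(k+1)},\, \tau \subset \sigma} \bigl|\{\eta \in X^{(n)} : \sigma \subseteq \eta\}\bigr|.
\]
Next I would rewrite the double sum as a sum over pairs $(\sigma, \eta)$ with $\tau \subset \sigma \subseteq \eta$, and then switch the order of summation so that $\eta$ is the outer index. For each fixed $\eta \in X^{(n)}$ containing $\tau$, the number of $\sigma \in X^{(k+1)}$ with $\tau \subset \sigma \subset \eta$ equals the number of ways to adjoin a single vertex to $\tau$ from the $n-k$ vertices of $\eta \setminus \tau$, which is exactly $n-k$.

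Combining these observations gives
\[
\sum_{\sigma \in X^{(k+1)},\, \tau \subset \sigma} \bigl|\{\eta \in X^{(n)} : \sigma \subseteq \eta\}\bigr| = (n-k)\, \bigl|\{\eta \in X^{(n)} : \tau \subseteq \eta\}\bigr|,
\]
so the right-hand side becomes $(n-k-1)! \cdot (n-k) \cdot |\{\eta \in X^{(n)} : \tau \subseteq \eta\}| = (n-k)! |\{\eta \in X^{(n)} : \tau \subseteq \eta\}| = m_h(\tau)$, as desired.

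There is no real obstacle here; the only point requiring even minor care is the combinatorial count of intermediate simplices $\sigma$ between $\tau$ (with $k+1$ vertices) and $\eta$ (with $n+1$ vertices), where the answer is $n-k$ and not, say, $\binom{n-k}{1}$ written in a more convoluted form. Everything else is purely mechanical rearrangement.
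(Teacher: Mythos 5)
Your proof is correct and follows essentially the same argument as the paper: both pull out the constant $(n-k-1)!$, switch the order of summation over pairs $(\sigma,\eta)$ with $\tau \subset \sigma \subseteq \eta$, and use the fact that for each fixed $\eta \supset \tau$ there are exactly $n-k$ intermediate $(k+1)$-simplices $\sigma$. (Note the paper's proof of \emph{this} proposition is already a one-step double count, not an induction; you may have been thinking of the proof of Proposition~\ref{weight in n dim simplices}.)
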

\begin{proof}
Fix  $-1 \leq k \leq n-1$ and $\tau \in X^{(k)}$, note that for every $\eta \in X^{(n)}$ with $\tau \subset \eta$, there are exactly $n-k$ simplices $\sigma \in X^{(k+1)}$ such that $\tau \subset \sigma \subseteq \eta$. Therefore we have that
\begin{dmath*}
\sum_{\sigma \in X^{(k+1)}, \tau \subset \sigma} m_h (\sigma) = {(n-k-1)! \sum_{\sigma \in X^{(k+1)}, \tau \subset \sigma} \vert \lbrace \eta \in X^{(n)} : \sigma \subseteq \eta \rbrace \vert} = {(n-k)!  \vert \lbrace \eta \in X^{(n)} : \tau \subseteq \eta \rbrace \vert} = m_h (\tau).
\end{dmath*}
\end{proof}

It will be more convenient to normalize $m_h$ as follows: define $\overline{m_h}$ to be the weight function:
$$\forall \sigma \in X^{(k)}, \overline{m_h} = \dfrac{m_h (\sigma )}{\vert X^{(n)} \vert} .$$

\begin{remark}
We remark that most of our results holds for any weight function. The only place where we'll need to use the normalized homogeneous weight function is when we like to deduce topological overlap using the results in \cite{KKL}.
\end{remark}

Throughout this article, $X$ is a pure, $n$-dimensional weighted simplicial complex with a weight function $m$.

\subsection{Links and spectral gaps}
\label{Links and spectral gaps subsection}
Let $X$ be a pure $n$-dimensional finite simplicial complex. For $\lbrace v_{0},...,v_{j} \rbrace=\tau\in X^{(j)}$, denote by $X_{\tau}$
the \emph{link} of $\tau$ in $X$, that is, the (pure) complex of dimension
$n-j-1$ consisting on simplices $\sigma=\lbrace  w_{0},...,w_{k} \rbrace$
such that $\lbrace v_{0},...,v_{j} \rbrace, \lbrace w_{0},...,w_{k} \rbrace$ are disjoint as sets and $\lbrace v_{0},...,v_{j} \rbrace \cup \lbrace w_{0},...,w_{k} \rbrace \in X^{(j+k+1)}$. Note that for $\lbrace \emptyset \rbrace = X^{(-1)}$, $X_\emptyset =X$. \\
If $m$ is a weight function on $X$, define $m_\tau$ to be a weight function of $X_\tau$ by taking
$$m_\tau (\sigma ) = m( \tau \cup \sigma).$$
\begin{proposition}
For $0 \leq j \leq n-2$ and $\tau \in X^{(j)}$, the function $m_\tau$ defined above is indeed a weight function on $X_\tau$, i.e., for every $k<n-j-1$ and every $\sigma \in X_\tau^{(k)}$ we have that 
$$m_\tau (\sigma ) = \sum_{\eta \in X_\tau^{(k+1)}} m_\tau (\eta ) .$$
\end{proposition}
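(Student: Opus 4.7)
The plan is to reduce this directly to the weight-function property of $m$ on $X$ itself, via the identification of cofaces in the link with cofaces containing $\tau$ in the ambient complex. Note that the displayed sum on the right-hand side should implicitly be over those $\eta \in X_\tau^{(k+1)}$ with $\sigma \subset \eta$ (otherwise the claim is clearly false); this is the only reading consistent with the definition of a weight function given earlier.

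First I would unfold the definition: $m_\tau(\sigma) = m(\tau \cup \sigma)$, where $\tau \cup \sigma \in X^{(j+k+1)}$. Since $k < n-j-1$, we have $j+k+1 < n$, so the weight-function property of $m$ on $X$ applies to $\tau \cup \sigma$, giving
\[
m(\tau \cup \sigma) = \sum_{\mu \in X^{(j+k+2)},\ \tau \cup \sigma \subset \mu} m(\mu).
\]

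Next I would set up the bijection between the index sets. The map $\eta \mapsto \tau \cup \eta$ is a bijection from $\{\eta \in X_\tau^{(k+1)} : \sigma \subset \eta\}$ onto $\{\mu \in X^{(j+k+2)} : \tau \cup \sigma \subset \mu\}$: injectivity is immediate since $\tau$ and $\eta$ are disjoint (by definition of the link), surjectivity follows because any such $\mu$ must contain $\tau$, so $\eta := \mu \setminus \tau$ is a $(k+1)$-simplex of $X_\tau$ containing $\sigma$. Under this bijection, $m(\mu) = m(\tau \cup \eta) = m_\tau(\eta)$.

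Combining these two observations yields
\[
m_\tau(\sigma) \;=\; \sum_{\mu \in X^{(j+k+2)},\ \tau \cup \sigma \subset \mu} m(\mu) \;=\; \sum_{\eta \in X_\tau^{(k+1)},\ \sigma \subset \eta} m_\tau(\eta),
\]
which is the required identity. There is no real obstacle here: the entire argument rests on the observation that containment in $X_\tau$ corresponds to containment over $\tau$ in $X$, together with the weight property of $m$ applied one level up.
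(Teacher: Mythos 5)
Your proof is correct and follows essentially the same route as the paper's: apply the weight-function identity for $m$ at $\tau\cup\sigma$ and reindex the sum via the bijection $\eta\leftrightarrow\tau\cup\eta$. You are also right that the displayed sum should be restricted to $\eta\supset\sigma$; the paper omits that constraint here (and in the ambient definition of a weight function), and its proof of this proposition also has a couple of off-by-one index typos ($X^{(k+j+1)}$ for $X^{(k+j+2)}$, $\eta\in X^{(k)}$ for $\eta\in X_\tau^{(k+1)}$) that your writeup correctly avoids.
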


\begin{proof}
Let $k<n-j-1$ and $\sigma \in X_\tau^{(k)}$, then
\begin{dmath*}
m_\tau (\sigma ) = m (\tau \cup \sigma ) = \sum_{\gamma \in X^{(k+j+1)}, \tau \cup \sigma \subset \gamma} m(\gamma) = \sum_{\eta \in X^{(k)}, \tau \cup \eta \in X^{(k+j+1)}, \sigma \subset \eta} m(\tau \cup \eta) =  \sum_{\eta \in X_\tau^{(k+1)}} m_\tau (\eta ) .
\end{dmath*}
\end{proof}

By remark \ref{1 skeleton of weighted complex is weighted graph - remark}, the $1$-skeleton of $X_\tau$ is a weighted graph with the weight function $m_\tau$. We shall denote by $\lambda (X_\tau)$ the smallest positive eigenvalue of the (weighted) graph Laplacian on the graph. 
\begin{remark}
We remark that when one works with the homogeneous weight function $m_h$, then for $\tau \in X^{(n-2)}$, $X_\tau$ is a graph and $m_{h,\tau}$ assigns the weight $1$ to each edge, so in this setting, the Laplacian is the usual (un-weighted) graph Laplacian.
\end{remark} 

Next, we'll state a useful theorem from \cite{Opp-LocalSpectral}:
\begin{theorem}\cite{Opp-LocalSpectral}[Lemma 5.1, Corollary 5.2]
\label{decent in links theorem}
Let $X$ be a pure $n$-dimensional weighted simplicial complex (with $n>1$). Assume that all the links of $X$ of dimension $>0$ are connected (including $X$ itself). For $-1 \leq k \leq n-2$ denote  
$$\lambda_k = \min_{\tau \in X^{(k-1)}} \lambda (X_\tau).$$
\begin{enumerate}
\item For every $0 \leq k \leq n-2$, we have that
$$\lambda_k \geq 2 - \dfrac{1}{\lambda_{k+1}} .$$
\item If $\lambda_{n-1} > \frac{n-1}{n}$, then for every $0 \leq k \leq n-2$, we have that $\lambda_{k} > \frac{k}{k+1}$. Moreover, for every $N \in \mathbb{N}, N \geq n$, we have that if 
 $\lambda_{n-1} > \frac{N-1}{N}$, then for every $0 \leq k \leq n-2$, $\lambda_{k} > \frac{N-n+k}{N-n + k+1}$.
\end{enumerate}

\end{theorem}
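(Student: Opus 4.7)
The plan is to prove this by Garland's \emph{descent in links} method, which bounds the spectral gap of the $1$-skeleton of each link $X_\tau$ of a $(k-1)$-simplex in terms of the spectral gaps of the one-vertex-smaller links $X_{\tau \cup \{v\}}$. Part $2$ will follow from Part $1$ by an elementary downward induction on $k$, so essentially all the work is in Part $1$.

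For Part $1$, I would fix $\tau \in X^{(k-1)}$, let $\mu = \lambda(X_\tau)$, and pick a $\Delta^+_{X_\tau}$-eigenfunction $\phi$ with eigenvalue $\mu$ which is orthogonal to constants in the weighted inner product. For each vertex $\{v\} \in X_\tau^{(0)}$, the restriction of $\phi$ to the vertex set of the link $X_{\tau \cup \{v\}}$ satisfies the weighted Poincar\'e inequality
$$\sum_{\{u,w\} \in X_{\tau \cup \{v\}}^{(1)}} m_{\tau \cup \{v\}}(\{u,w\})(\phi(u) - \phi(w))^2 \geq \lambda_{k+1} \sum_{\{u\} \in X_{\tau \cup \{v\}}^{(0)}} m_{\tau \cup \{v\}}(\{u\})(\phi(u) - \overline{\phi}_v)^2,$$
where $\overline{\phi}_v$ denotes the weighted mean of $\phi$ over the link. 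Summing these inequalities unweighted over $v$, the left-hand side reorganises as a sum over triangles $\{u,v,w\} \in X_\tau^{(2)}$ weighted by $m_\tau(\{u,v,w\})$; collapsing by edges and using the weight identity $m_\tau(\{a,b\}) = \sum_c m_\tau(\{a,b,c\})$ yields exactly $\langle \Delta^+_{X_\tau} \phi, \phi\rangle$. On the right, expanding the squares and using $\sum_u m_\tau(\{u,v\}) = m_\tau(\{v\})$ yields $\lambda_{k+1}(\|\phi\|^2 - \|A\phi\|^2)$, where $A = I - \Delta^+_{X_\tau}$ is the averaging operator. Since $\phi$ is an eigenfunction, $\|A\phi\|^2 = (1-\mu)^2\|\phi\|^2$, so the inequality reduces to $\mu \geq \lambda_{k+1}\mu(2-\mu)$; dividing by $\mu > 0$ and rearranging gives $\mu \geq 2 - 1/\lambda_{k+1}$. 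Taking the minimum over $\tau \in X^{(k-1)}$ proves Part $1$.

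For Part $2$, I would induct downward on $k$. The base case $k = n-2$ uses $\lambda_{n-1} > (n-1)/n$ to conclude $\lambda_{n-2} \geq 2 - 1/\lambda_{n-1} > (n-2)/(n-1)$; the inductive step is identical, since $\lambda_{k+1} > (k+1)/(k+2)$ gives $\lambda_k > 2 - (k+2)/(k+1) = k/(k+1)$. The stronger $N$-indexed statement is the same induction with $k$ replaced by $N-n+k$. The main obstacle is the summation-reorganisation step in Part $1$: arranging the weights so that the left-hand side \emph{exactly} recovers $\langle \Delta^+_{X_\tau} \phi, \phi \rangle$ with no spurious multiplicative factor requires threading the single weight-function compatibility of $X$ through the iterated link weights $m_\tau$ and $m_{\tau \cup \{v\}}$; once this bookkeeping is in place, the rest is routine spectral manipulation and arithmetic.
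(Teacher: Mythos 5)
Your proof is correct and is the standard Garland-type descent argument; the paper does not prove this theorem itself but cites it from \cite{Opp-LocalSpectral} (Lemma 5.1 and Corollary 5.2), and your sketch matches that reference's method. The weight bookkeeping you flag as the main obstacle does close cleanly: since $m_{\tau\cup\{v\}}(\sigma) = m_\tau(\{v\}\cup\sigma)$, the unweighted sum over apex vertices $v$ together with the compatibility $m_\tau(\{a,b\}) = \sum_c m_\tau(\{a,b,c\})$ reproduces $\langle d\phi, d\phi\rangle_{X_\tau}$ exactly with no spurious factor, and the vertex-level identity $\sum_u m_\tau(\{u,v\}) = m_\tau(\{v\})$ gives $\|\phi\|^2 - \|A\phi\|^2$ on the right, so that dividing by $\mu = \lambda(X_\tau) > 0$ (positive by connectedness of $X_\tau$) yields the recursion, and Part 2 then follows by the downward induction you describe.
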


\section{Different notions of expansion}
Here we'll review different notions of expansion considered in this article.
\subsection{$\mathbb{F}_2$-expansion}
\label{subsection F_2 expansion}
For $-1 \leq k \leq n$, define $C^{k} (X, \mathbb{F}_2)$ to be $k$-cochains of $X$ over $\mathbb{F}_2$:
$$C^{k} (X, \mathbb{F}_2) = \lbrace \phi : X^{(k)} \rightarrow \mathbb{F}_2 \rbrace. $$
For $\phi \in C^{k} (X, \mathbb{F}_2)$ denote by $supp(\phi)$ the support of $\phi$:
$$supp (\phi ) = \lbrace \sigma \in X^{(k)} : \phi (\sigma) =1  \rbrace .$$  
Further define the differential $d_k : C^{k} (X, \mathbb{F}_2) \rightarrow C^{k+1} (X, \mathbb{F}_2)$ as:
$$\forall \sigma \in X^{(k+1)}, \forall \phi \in C^{k} (X, \mathbb{F}_2), d_k \phi (\sigma) = \sum_{\tau \in X^{(k)}, \tau \subset \sigma} \phi (\tau)  .$$
It is easy to check that $d_{k+1} d_k = 0$, therefore the usual cohomological definitions hold: i.e., denote
$$B^k (X, \mathbb{F}_2) = im (d_{k-1} ), Z^k (X, \mathbb{F}_2) = ker (d_k), H^k (X, \mathbb{F}_2) = \dfrac{Z^k (X, \mathbb{F}_2)}{B^k (X, \mathbb{F}_2)} .$$
From now on, when there is no chance for confusion, we shall omit the index on the differential and just denote it by $d$. \\
Define the following norm on $ C^{k} (X, \mathbb{F}_2)$: for every $\phi \in  C^{k} (X, \mathbb{F}_2)$ define
$$\Vert \phi \Vert  = \sum_{\tau \in X^{(k)}, \phi (\tau ) =1} m(\tau).$$
\begin{claim}
For every $k$ as above and for every $\phi \in  C^k (X, \mathbb{F}_2)$, we have that 
$$\Vert d \phi \Vert \leq \Vert \phi \Vert.$$
\end{claim}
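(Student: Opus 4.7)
The plan is to bound the support of $d\phi$ in terms of the support of $\phi$ using the key defining property of the weight function, namely $m(\tau) = \sum_{\sigma \in X^{(k+1)}, \tau \subset \sigma} m(\sigma)$ for every $\tau \in X^{(k)}$ with $k \leq n-1$.

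First I observe that if $d\phi(\sigma) = 1$ for some $\sigma \in X^{(k+1)}$, then the sum $\sum_{\tau \subset \sigma} \phi(\tau)$ is odd in $\mathbb{F}_2$, hence in particular there exists at least one $k$-face $\tau \subset \sigma$ with $\phi(\tau) = 1$. Therefore
\[
\mathrm{supp}(d\phi) \subseteq \bigcup_{\tau \in \mathrm{supp}(\phi)} \{\sigma \in X^{(k+1)} : \tau \subset \sigma\}.
\]

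Next, I bound the norm by switching the order of summation (which is only an upper bound because a given $\sigma \in X^{(k+1)}$ may be covered multiple times by $k$-faces in $\mathrm{supp}(\phi)$, but this is harmless for an inequality):
\[
\Vert d\phi \Vert = \sum_{\sigma \in \mathrm{supp}(d\phi)} m(\sigma) \leq \sum_{\tau \in \mathrm{supp}(\phi)} \sum_{\sigma \in X^{(k+1)}, \tau \subset \sigma} m(\sigma).
\]

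Finally, applying the weight function identity $m(\tau) = \sum_{\sigma \in X^{(k+1)}, \tau \subset \sigma} m(\sigma)$ (valid for all $-1 \leq k \leq n-1$) to each inner sum gives
\[
\Vert d\phi \Vert \leq \sum_{\tau \in \mathrm{supp}(\phi)} m(\tau) = \Vert \phi \Vert,
\]
as desired. There is no real obstacle here; this is essentially a bookkeeping statement and the only input beyond the definitions is the defining compatibility of the weights between dimension $k$ and dimension $k+1$.
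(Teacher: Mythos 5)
Your proof is correct and follows essentially the same route as the paper: both arguments observe that $d\phi(\sigma)=1$ forces some $\tau\in\mathrm{supp}(\phi)$ with $\tau\subset\sigma$, swap the order of summation, and then invoke the defining identity $m(\tau)=\sum_{\sigma\supset\tau}m(\sigma)$ of the weight function. The paper phrases the overcounting step as bounding $d\phi(\sigma)$ by the number of supporting faces rather than by a support inclusion, but this is only a cosmetic difference.
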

\begin{proof}
Fix $\phi \in  C^k (X, \mathbb{F}_2)$. Note that for every $\sigma \in X^{(k+1)}$
$$\left( \forall \tau \in supp (\phi), \tau \not\subset \sigma \right) \Rightarrow d \phi (\sigma) =0.$$
Also note that for every $\sigma \in X^{(k+1)}$, $d \phi (\sigma) \leq 1$. Therefore
\begin{dmath*}
\Vert d \phi \Vert = \sum_{\sigma \in X^{(k+1)}} m(\sigma) d \phi (\sigma) \leq \sum_{\sigma \in X^{(k+1)}} m(\sigma) (\sum_{\tau \in X^{(k)}, \tau \in supp (\phi), \tau \subset \sigma} 1 )  = \\ \sum_{\tau \in X^{(k)}, \tau \in supp (\phi)} \sum_{\sigma \in X^{(k+1)}, \tau \subset \sigma} m(\sigma) = \sum_{\tau \in X^{(k)}, \tau \in supp (\phi)} m(\tau) = \Vert \phi \Vert. 
\end{dmath*}
\end{proof}
Using the above norm, we'll define the following constants: 
\begin{definition}
for $-1 \leq k \leq n-1$, and $X$ a simplicial complex, define
\begin{enumerate}
\item The k-th coboundary expansion of $X$:
$$\epsilon_k (X) = \min \left\lbrace \dfrac{\Vert d \phi \Vert}{\min_{\psi \in B^k (X, \mathbb{F}_2)} \Vert \phi - \psi \Vert} : \phi \in C^{k} (X, \mathbb{F}_2) \setminus B^k (X, \mathbb{F}_2) \right\rbrace.$$
\item The k-th cocycle expansion of $X$:
$$ \widetilde{\epsilon}_k (X) = \min \left\lbrace \dfrac{\Vert d \phi \Vert}{\min_{\psi \in Z^k (X, \mathbb{F}_2)} \Vert \phi - \psi \Vert} : \phi \in C^{k} (X, \mathbb{F}_2) \setminus Z^k (X, \mathbb{F}_2) \right\rbrace.$$
\item The k-th cofilling constant of $X$:
$$\mu_k (X) = \max_{0 \neq \phi \in B^{k+1} (X,\mathbb{F}_2 )} \left( \dfrac{1}{\Vert \phi \Vert} \min_{\psi \in   C^{k} (X,\mathbb{F}_2 ), d \psi = \phi} \Vert \psi \Vert \right) .$$
\end{enumerate}
\end{definition}
\begin{remark}
\label{connection between cofilling and cocycle expansion}
Recall that $ B^k (X, \mathbb{F}_2) \subseteq  Z^k (X, \mathbb{F}_2)$ and therefore $\widetilde{\epsilon}_k (X) \geq \epsilon_k (X)$. Also note that 
$$\mu_k (X)  = \dfrac{1}{\widetilde{\epsilon}_k (X)},$$
(the proof of this equality is basically unfolding the definitions of both constants and therefore it is left as an exercise to the reader).   
\end{remark}
\begin{remark}
\label{the case k=-1 always has epsilon =1}
Note that when $k=-1$, we have that $B^{-1} (X, \mathbb{F}_2) = Z^{-1} (X, \mathbb{F}_2) = \lbrace 0 \rbrace$. Also note that the only $\phi in  C^{-1} (X, \mathbb{F}_2) \setminus B^{-1} (X, \mathbb{F}_2)$ is $\phi (\emptyset) =1$  and for that $\phi$, we have that 
$$\Vert d \phi \Vert = m(X^{(0)}) = m(\emptyset) = \Vert \phi \Vert.$$
Therefore, we always have 
$$\mu_{-1} (X) = \widetilde{\epsilon}_{-1} (X) = \epsilon_{-1} (X) =1.$$ 
\end{remark}
\begin{definition}
Let $\lbrace X_j \rbrace$ be a family of pure $n$-dimensional simplicial complexes.
\begin{enumerate}
\item $\lbrace X_j \rbrace$ is called a family of coboundary expanders if there is a constant $\epsilon >0$ such that 
$$\forall 0 \leq k \leq n-1, \forall j, \epsilon_k (X_j ) \geq \epsilon.$$
\item  $\lbrace X_j \rbrace$ is called a family of cocycle expanders if there is a constant $\epsilon >0$ such that 
$$\forall 0 \leq k \leq n-1, \forall j, \widetilde{\epsilon}_k (X_j ) \geq \epsilon.$$
\end{enumerate} 
\end{definition}

\subsection{Topological expansion}
Let $X$ be an $n$-dimensional simplicial complex as before. Given a map $f : X^{(0)} \rightarrow \mathbb{R}^n$, a topological extension of $f$ is a continuous map $\widetilde{f} : X \rightarrow \mathbb{R}^n$ which coincides with $f$ on $X^{(0)}$.
\begin{definition}
A simplicial complex $X$ as above is said to have $c$-topological overlapping (with $1 \geq c>0$) if for every $f: X^{(0)} \rightarrow \mathbb{R}^n$ and every topological extension $\widetilde{f}$, there is a point $z \in \mathbb{R}^n$ such that 
$$\vert \lbrace \sigma \in X^{(n)} : z \in \widetilde{f} (\sigma)\rbrace \vert \geq c \vert X^{(n)} \vert .$$
In other words, this means that at least $c$ fraction of the images of $n$-simplices intersect at a single point. 
\end{definition}

A family of pure $n$-dimensional simplicial complexes $\lbrace X_j \rbrace$ is called a family of topological expanders, if there is some $c>0$ such that for every $j$, $X_j$ has  $c$-topological overlapping. \\
In \cite{Grom}, Gromov showed that any family of coboundary expanders (with respect to the homogeneous weight) is also a family of topological expanders, where the topological overlapping $c$ is a function of $\epsilon$  defined above.
 
\subsection{Local spectral expansion}

In \cite{Opp-LocalSpectral}, the author defined the concept of local spectral expansion:
\begin{definition}
A  pure $n$-dimensional simplicial complex is said to have $\lambda$-local spectral expansion, where $\frac{n-1}{n} < \lambda \leq 1$ if both of the following hold:
\begin{enumerate}
\item All the links of $X$ of dimension $>0$ are connected. 
\item All the $1$ dimensional links have spectral gap at least $\lambda$ (the $1$ dimensional links are the links of the $(n-2)$-simplices). 
\end{enumerate}
\end{definition}

This definition gives many results because theorem \ref{decent in links theorem} allows one to deduce much spectral data from the spectral gap of $1$-dimensional links.

\section{Localization inequalities in $\mathbb{F}_2$ coefficients}
Observe that for every $-1 \leq j \leq n-2$ and $\tau \in X^{(j)}$, the link $X_\tau$ is a weighted simplicial complex (with the weight $m_\tau$) and we can define the norm of it for every $\phi \in C^k (X_\tau, \mathbb{F}_2)$. We also denote $d_{\tau,k} : C^k (X_\tau, \mathbb{F}_2) \rightarrow C^{k+1} (X_\tau, \mathbb{F}_2)$ to be the differential (as before, we shall omit the index $k$ and denote only $d_\tau$).  Next, we'll define the concept of localization for $\phi \in C^k (X, \mathbb{F}_2)$:
\begin{definition}

Let $X$ be a pure $n$-dimensional weighted simplicial complex and let $-1 \leq j \leq n-1$. For $\tau \in X^{(j)}$, and $\phi \in C^k (X, \mathbb{F}_2)$ where $k-j-1 \geq 0$, define the localization of $\phi$ at $X_\tau$ to be $\phi_\tau \in  C^{k-j-1} (X_\tau, \mathbb{F}_2)$:
$$\forall \sigma \in X_\tau^{(k-j-1)}, \phi_\tau (\sigma ) = \phi (\tau \cup \sigma).$$
\end{definition}

\begin{remark}
When working with $C^k(X,\mathbb{R})$ (and not $C^k(X,\mathbb{F}_2)$) it is known that localization can be used to calculate norms of cochains and of differentials.  Below we establish such methods when working in $C^k(X,\mathbb{F}_2)$. As far as we know, the results below are new - one can find similar results in \cite{KKL}, proven only for Ramanujan complexes, but not for the general case. 
The reader should note that proposition \ref{F_2 norm from links proposition} and lemma \ref{F_2 differential norm and (k-1)-localization proposition} below have analogous results in $C^k(X,\mathbb{R})$ that are well known (appear for instance in \cite{BS}). The reader can compare these results to the results stated in \cite{Opp-LocalSpectral}[Lemma 4.4, Corollary 4.6].
\end{remark}

\begin{proposition}
\label{F_2 norm from links proposition}
Let $X$ be a pure $n$-dimensional weighted simplicial complex and let $-1 \leq j \leq n-1$. 
For every $k \geq j+1$ and every $\phi \in C^{k} (X, \mathbb{F}_2)$ we have that 
$${k+1 \choose j+1} \Vert \phi \Vert = \sum_{\tau \in X^{(j)}} \Vert \phi_\tau \Vert ,$$
where $\Vert \phi_\tau \Vert$ is the norm of $\phi_\tau$ in $X_\tau$.
\end{proposition}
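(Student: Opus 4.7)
The plan is to unfold the definition of $\|\phi_\tau\|$, swap the order of summation, and count the number of $j$-dimensional faces of a $k$-simplex.

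First I would write out the left-hand side of the sum. Fix $\tau \in X^{(j)}$. By definition of the weight on the link, $m_\tau(\sigma) = m(\tau \cup \sigma)$ for any $\sigma \in X_\tau^{(k-j-1)}$, and $\phi_\tau(\sigma) = \phi(\tau \cup \sigma)$. Thus
\begin{equation*}
\|\phi_\tau\| = \sum_{\substack{\sigma \in X_\tau^{(k-j-1)} \\ \phi_\tau(\sigma) = 1}} m_\tau(\sigma) = \sum_{\substack{\sigma \in X_\tau^{(k-j-1)} \\ \phi(\tau \cup \sigma) = 1}} m(\tau \cup \sigma).
\end{equation*}
The map $\sigma \mapsto \tau \cup \sigma$ is a bijection between $X_\tau^{(k-j-1)}$ and the set of $\eta \in X^{(k)}$ containing $\tau$, so reparametrizing yields
\begin{equation*}
\|\phi_\tau\| = \sum_{\substack{\eta \in X^{(k)},\, \tau \subset \eta \\ \phi(\eta) = 1}} m(\eta).
\end{equation*}

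Next I would sum over $\tau \in X^{(j)}$ and swap the order of summation:
\begin{equation*}
\sum_{\tau \in X^{(j)}} \|\phi_\tau\| = \sum_{\substack{\eta \in X^{(k)} \\ \phi(\eta) = 1}} m(\eta) \cdot \bigl|\{\tau \in X^{(j)} : \tau \subset \eta\}\bigr|.
\end{equation*}
Since $\eta$ is a $k$-simplex, it has exactly $\binom{k+1}{j+1}$ faces of dimension $j$, and all of them are simplices of $X$. Substituting this count gives the claimed identity.

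There is no real obstacle here: the argument is just a Fubini-type exchange of summation together with the (built-in) compatibility between the link weight $m_\tau$ and the ambient weight $m$. The only minor care is in the case $j = -1$, where $X^{(-1)} = \{\emptyset\}$, $X_\emptyset = X$, and the sum degenerates correctly to $\binom{k+1}{0}\|\phi\| = \|\phi\|$, matching the formula.
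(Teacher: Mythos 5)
Your proof is correct and follows essentially the same approach as the paper: unfold the definition of $\Vert \phi_\tau \Vert$, reindex via $\sigma \mapsto \tau \cup \sigma$, swap the order of summation, and count the $\binom{k+1}{j+1}$ $j$-faces of a $k$-simplex.
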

 
\begin{proof}
Let $\phi \in C^{k} (X, \mathbb{F}_2$ as above. Then
\begin{dmath*}
\sum_{\tau \in X^{(j)}} \Vert \phi_\tau \Vert = \sum_{\tau \in X^{(j)}} \sum_{\sigma \in X_\tau^{(k-j-1)}, \phi_\tau (\sigma) =1} m_{\tau} (\sigma) = \sum_{\tau \in X^{(j)}} \sum_{\tau \cup \sigma  \in X^{(k)}, \phi (\tau \cup \sigma) =1} m (\tau \cup \sigma)  =\\
 \sum_{\tau \in X^{(j)}} \sum_{\gamma \in X^{(k)}, \tau \subset \gamma, \phi (\gamma) =1} m (\gamma) = \sum_{\gamma \in X^{(k)}, \phi (\gamma)= 1}  \sum_{\tau \in X^{(j)}, \tau \subset \gamma} m(\gamma) = \\
  \sum_{\gamma \in X^{(k)}, \phi (\gamma)= 1}  {k+1 \choose j+1} m(\gamma) = {k+1 \choose j+1} \Vert \phi \Vert .
\end{dmath*}
\end{proof}

\begin{proposition}
\label{F_2 differential norm and 0-localization proposition}
Let $1 \leq k \leq n-1$ and let $\phi \in C^{k} (X, \mathbb{F}_2)$. Then we have that: 
$$k \Vert d \phi \Vert + \Vert \phi \Vert \geq \sum_{\lbrace v \rbrace \in X^{(0)}} \Vert d_{\lbrace v \rbrace} \phi_{\lbrace v \rbrace} \Vert  .$$
\end{proposition}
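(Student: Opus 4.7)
The plan is to reorganize the sum $\sum_v \Vert d_{\{v\}} \phi_{\{v\}}\Vert$ as a sum indexed by $(k+1)$-simplices of $X$, and then to exploit the $\mathbb{F}_2$ parity identity $d\phi(\gamma) \equiv \sum_{\tau \subsetneq \gamma} \phi(\tau) \pmod 2$ to obtain the factor of $k$ rather than the cruder $k+2$ that the triangle inequality alone would yield.

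First, I would record the following local identity. For $\{v\}\in X^{(0)}$ and $\eta \in X_{\{v\}}^{(k)}$, set $\gamma = \{v\}\cup\eta\in X^{(k+1)}$; the $k$-faces of $\gamma$ are $\eta$ together with the $k$-simplices $\{v\}\cup\sigma$ where $\sigma$ runs over the $(k-1)$-faces of $\eta$. Unfolding the definitions of $d$ and $d_{\{v\}}$ then gives
$$d_{\{v\}}\phi_{\{v\}}(\eta)\ \equiv\ d\phi(\gamma) + \phi(\eta)\ \pmod 2,$$
so $d_{\{v\}}\phi_{\{v\}}(\eta)=1$ iff $\phi(\eta)\neq d\phi(\gamma)$.

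Swapping the order of summation (using $m_{\{v\}}(\eta)=m(\gamma)$) then yields
$$\sum_{\{v\}\in X^{(0)}}\Vert d_{\{v\}}\phi_{\{v\}}\Vert\ =\ \sum_{\gamma\in X^{(k+1)}} m(\gamma)\, N_\gamma,$$
where $N_\gamma := \bigl|\{v\in\gamma:\phi(\gamma\setminus\{v\})\neq d\phi(\gamma)\}\bigr|$. Writing $M_\gamma := \bigl|\{\tau\in X^{(k)}: \tau\subsetneq\gamma,\ \phi(\tau)=1\}\bigr|$, one has $N_\gamma = M_\gamma$ when $d\phi(\gamma)=0$ and $N_\gamma = (k+2)-M_\gamma$ when $d\phi(\gamma)=1$. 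The weight identity $m(\tau)=\sum_{\gamma\supset\tau} m(\gamma)$ (for $k<n$) gives $\sum_\gamma m(\gamma) M_\gamma = \Vert\phi\Vert$, and combining the two cases yields
$$\sum_v \Vert d_{\{v\}}\phi_{\{v\}}\Vert\ =\ \Vert\phi\Vert + (k+2)\Vert d\phi\Vert - 2\sum_{\gamma:\, d\phi(\gamma)=1} m(\gamma)\, M_\gamma.$$

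The last (and only non-bookkeeping) step is the parity observation: whenever $d\phi(\gamma)=1$, the relation $M_\gamma \equiv d\phi(\gamma)\pmod 2$ forces $M_\gamma$ to be a positive odd integer, so $M_\gamma\geq 1$. This gives $\sum_{\gamma:\,d\phi(\gamma)=1} m(\gamma)\, M_\gamma \geq \Vert d\phi\Vert$, and substituting back produces the claimed bound $k\Vert d\phi\Vert + \Vert\phi\Vert \geq \sum_v \Vert d_{\{v\}}\phi_{\{v\}}\Vert$. I do not expect a genuine obstacle: the factor of $k$ (instead of $k+2$) is exactly the saving afforded by $\mathbb{F}_2$ parity over a naive triangle-inequality estimate.
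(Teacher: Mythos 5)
Your proof is correct and takes essentially the same route as the paper: both reorganize $\sum_v\Vert d_{\{v\}}\phi_{\{v\}}\Vert$ as a sum over $(k+1)$-simplices, use the local identity $d_{\{v\}}\phi_{\{v\}}(\gamma\setminus\{v\}) = d\phi(\gamma) + \phi(\gamma\setminus\{v\})$ (the paper phrases this combinatorially via the intersection $\sigma_1\cap\cdots\cap\sigma_i$), and close with the parity observation that $M_\gamma\geq 1$ whenever $d\phi(\gamma)=1$. The only difference is bookkeeping: the paper groups the $(k+1)$-simplices into classes $T_i$ indexed by $M_\gamma=i$ and checks $(k+4-4i)\leq k$ for $i\geq 1$, whereas you keep $M_\gamma$ per simplex, which makes the $\mathbb{F}_2$-parity saving a bit more transparent.
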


\begin{proof}
Fix $\phi \in C^{k} (X, \mathbb{F}_2)$ and partition $X^{(k+1)}$ as follows: denote
$$T_0 = \lbrace \eta \in X^{(k+1)} : \forall \sigma \in X^{(k)}, \sigma \subset \eta \Rightarrow \phi (\sigma) =0 \rbrace ,$$
and for $i = 1,...,k+2$, denote
$$T_i = \lbrace \eta \in X^{(k+1)} : \exists! \sigma_1,...,\sigma_i \in X^{(k)}, \forall j, \sigma_j \subset \eta, \phi (\sigma_j) = 1 \rbrace .$$
Then $T_0,...,T_{k+2}$ is a disjoint partition of $X^{(k+1)}$. By the definition of the norm and the differential, we have that 
$$\Vert \phi \Vert = \sum_{j=0}^{k+2} (j) m(T_j).$$
$$\Vert d \phi \Vert = \sum_{i =1}^{\lceil \frac{k+2}{2} \rceil} m( T_{2i-1})  .$$
Note that for every $1 \leq i \leq k+2$ and for every $\eta \in T_i$, when choosing $v \in \eta$ there are exactly two possibilities: either $v \in \sigma_j$ for every $1 \leq j \leq i$ (where $\sigma_j$ are the $k$-simplices as in the definition of $T_i$), or there is a single $j_0$ such that $v \notin \sigma_{j_0}$.  Also note that 
$$\vert \sigma_1 \cap ... \cap \sigma_i \vert = k+2-i .$$
Therefore, for every  $1 \leq i \leq k+2$ and every $\eta \in T_i$ there are $k+2-i$ vertices $v$ such that $d_{\lbrace v \rbrace} \phi_{\lbrace v \rbrace} (\eta \setminus \lbrace v \rbrace) = d \phi (\eta)$ and $i$ vertices such that $d_{\lbrace v \rbrace} \phi_{\lbrace v \rbrace} (\eta \setminus \lbrace v \rbrace) = d \phi (\eta) + 1$ (when the addition is in $\mathbb{F}_2$). Therefore
$$\sum_{\lbrace v \rbrace \in X^{(0)}}  \Vert d_{\lbrace v \rbrace} \phi_{\lbrace v \rbrace} \Vert = \sum_{i =1}^{\lceil \frac{k+2}{2} \rceil} (k+2-(2i-1)) m( T_{2i-1} ) + \sum_{i =1}^{\lfloor \frac{k+2}{2} \rfloor} (2i) m( T_{2i} )  .$$
We finish by:
\begin{dmath*}
\sum_{\lbrace v \rbrace \in X^{(0)}}  \Vert  d_{\lbrace v \rbrace} \phi_{\lbrace v \rbrace} \Vert - \Vert \phi \Vert = \\
\sum_{i =1}^{\lceil \frac{k+2}{2} \rceil} (k+2-(2i-1)) m( T_{2i-1} ) + \sum_{i =1}^{\lfloor \frac{k+2}{2} \rfloor} (2i) m( T_{2i} )   - \sum_{j=0}^{k+2} (j) m(T_j) = \\
\sum_{i =1}^{\lceil \frac{k+2}{2} \rceil} (k+2-(2i-1) - (2i-1)) m( T_{2i-1} ) = \sum_{i =1}^{\lceil \frac{k+2}{2} \rceil} (k+4-4i) m( T_{2i-1} ) \leq k \Vert d \phi \Vert.
\end{dmath*}
\end{proof}

Using the above proposition we can get another result of this type:
\begin{lemma}
\label{F_2 differential norm and (k-1)-localization proposition}
Let $1 \leq k \leq n-1$ and let $\phi \in C^{k} (X, \mathbb{F}_2)$. Then we have that: 
$$\Vert d \phi \Vert  \geq \sum_{\tau \in X^{(k-1)}} \Vert d_\tau \phi_\tau \Vert  - k \Vert \phi \Vert.$$
\end{lemma}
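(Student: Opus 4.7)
The plan is to mimic the proof of Proposition~\ref{F_2 differential norm and 0-localization proposition} by re-using the same partition $T_0, T_1, \ldots, T_{k+2}$ of $X^{(k+1)}$, where $T_i = \{\eta \in X^{(k+1)} : |\{\sigma \in X^{(k)} : \sigma \subset \eta, \phi(\sigma)=1\}| = i\}$, and expressing the new quantity $\sum_{\tau \in X^{(k-1)}} \Vert d_\tau \phi_\tau \Vert$ as a weighted sum over these classes. From the previous proof we already have $\Vert \phi \Vert = \sum_{i=0}^{k+2} i \, m(T_i)$ and $\Vert d\phi \Vert = \sum_{i \text{ odd}} m(T_i)$.

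The first main step is to obtain the analogous expression for the left-hand side of the lemma. For $\tau \in X^{(k-1)}$ and $\{v,w\} \in X_\tau^{(1)}$, the value $d_\tau \phi_\tau(\{v,w\}) = \phi(\tau \cup \{v\}) + \phi(\tau \cup \{w\})$ equals $1$ precisely when exactly one of the two $k$-faces $\tau\cup\{v\}$, $\tau\cup\{w\}$ lies in $\mathrm{supp}(\phi)$. I will re-index the double sum by $\eta = \tau \cup \{v, w\} \in X^{(k+1)}$; each such $\eta$ arises from $\binom{k+2}{2}$ distinct unordered pairs $(\tau, \{v,w\})$, the weight is $m_\tau(\{v,w\}) = m(\eta)$, and the condition on the pair becomes "exactly one of $\eta\setminus\{v\}$, $\eta\setminus\{w\}$ is in $\mathrm{supp}(\phi)$". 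For $\eta \in T_i$ the number of such unordered pairs is $i(k+2-i)$, yielding
$$\sum_{\tau \in X^{(k-1)}} \Vert d_\tau \phi_\tau \Vert = \sum_{i=0}^{k+2} i(k+2-i)\, m(T_i).$$

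With all three quantities now expressed over the same partition, the lemma reduces to the pointwise inequality
$$i(k+2-i) \leq k\cdot i + \mathbf{1}[i \text{ odd}], \qquad 0 \leq i \leq k+2,$$
which rearranges to $(i-1)^2 \geq 1 - \mathbf{1}[i \text{ odd}]$ and is trivial to verify case-by-case (it holds with equality for $i = 0, 1, 2$ and strictly for $i \geq 3$). Multiplying by $m(T_i) \geq 0$ and summing over $i$ gives exactly the claimed bound.

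I do not expect a real obstacle here: the content of the proof is entirely combinatorial, and the only care required is to count the unordered pair $\{v, w\}$ once (not twice) in the re-indexing so that the factor is $i(k+2-i)$ and not $2i(k+2-i)$. The conceptual point is that Proposition~\ref{F_2 differential norm and 0-localization proposition} already prepared the right partition and essentially all of its accounting; this lemma is the natural Pascal-type variant for $(k-1)$-localizations instead of vertex-localizations.
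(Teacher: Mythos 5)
Your proof is correct, and it takes a genuinely different route from the paper's. The paper proves the lemma by induction on $k$: it applies the base case (Proposition~\ref{F_2 differential norm and 0-localization proposition}, the $0$-localization inequality) to each $\phi_{\lbrace v\rbrace}$ in the vertex links, sums over vertices, invokes Proposition~\ref{F_2 norm from links proposition} to collapse $\sum_{\lbrace v\rbrace}\Vert\phi_{\lbrace v\rbrace}\Vert$, and then feeds the result back through the $0$-localization inequality one more time. Your argument instead computes $\sum_{\tau\in X^{(k-1)}}\Vert d_\tau\phi_\tau\Vert$ directly against the same partition $T_0,\ldots,T_{k+2}$ of $X^{(k+1)}$ that the base case introduced, obtaining the exact identity $\sum_\tau \Vert d_\tau\phi_\tau\Vert = \sum_i i(k+2-i)\,m(T_i)$, so that the lemma reduces to the trivial coefficientwise inequality $i(k+2-i) \leq ki + \mathbf{1}[i\text{ odd}]$, equivalently $i(i-2) + \mathbf{1}[i\text{ odd}] \geq 0$. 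The reindexing by $\eta = \tau\cup\lbrace v,w\rbrace$ and the count $i(k+2-i)$ of unordered vertex pairs with exactly one face in $\mathrm{supp}(\phi)$ are both correct. What your route buys is that it is self-contained (no induction, and Proposition~\ref{F_2 norm from links proposition} is not needed), it gives an explicit formula rather than only a bound, and it makes visible exactly where the slack in the inequality comes from (the terms $i\geq 3$). The paper's inductive route is less direct but reuses the already-proved vertex-localization statement, which keeps the bookkeeping to a single combinatorial lemma.
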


\begin{proof}
We'll prove by induction. The case $k=1$ is proven in the above proposition. Let $k>1$.
For every $\lbrace v \rbrace \in X^{(0)}$ apply the induction assumption for each $\phi_{\lbrace v \rbrace} \in C^{k-1} (X_{\lbrace v \rbrace}, \mathbb{F}_2)$ to get 
\begin{dmath*}
\Vert d_{\lbrace v \rbrace} \phi_{\lbrace v \rbrace} \Vert \geq \left( \sum_{\eta \in X_{\lbrace v \rbrace}^{(k-2)}} \Vert (d_{\lbrace v \rbrace})_\eta (\phi_{\lbrace v \rbrace})_\eta \Vert \right) - (k-1) \Vert \phi_{\lbrace v \rbrace} \Vert = \\
\left( \sum_{\tau \in X^{(k-1)}, v \in \tau} \Vert d_\tau \phi_\tau \Vert \right) - (k-1) \Vert \phi_{\lbrace v \rbrace} \Vert.
\end{dmath*}
Summing on all vertices we get that 
\begin{dmath*}
\sum_{{\lbrace v \rbrace} \in X^{(0)}}  \Vert d_{\lbrace v \rbrace} \phi_{\lbrace v \rbrace} \Vert \geq \sum_{\lbrace v \rbrace \in X^{(0)}} \left( \left( \sum_{\tau \in X^{(k-1)}, v \in \tau} \Vert d_\tau \phi_\tau \Vert \right) - (k-1) \Vert \phi_{\lbrace v \rbrace} \Vert \right) = \\ \sum_{\tau \in X^{(k-1)}} k \Vert d_\tau \phi_\tau \Vert - (k+1)(k-1) \Vert \phi \Vert,
\end{dmath*}
where in the last equality we used proposition \ref{F_2 norm from links proposition}.
Next, by proposition \ref{F_2 differential norm and 0-localization proposition}:
$$k \Vert d \phi \Vert + \Vert \phi \Vert \geq \sum_{{\lbrace v \rbrace} \in X^{(0)}} \Vert d_{\lbrace v \rbrace} \phi_{\lbrace v \rbrace} \Vert  .$$
Therefore
$$k \Vert d \phi \Vert + \Vert \phi \Vert \geq \sum_{\tau \in X^{(k-1)}} k \Vert d_\tau \phi_\tau \Vert - (k+1)(k-1) \Vert \phi \Vert ,$$
which yields
$$\Vert d \phi \Vert  \geq \sum_{\tau \in X^{(k-1)}} \Vert d_\tau \phi_\tau \Vert  - k \Vert \phi \Vert,$$
and we are done.
\end{proof}

\section{Different notions of minimality}
In order to derive higher dimensional isoperimetric inequalities (see below), we must first discuss several notions of minimality of a cochain. The idea is that for $\phi \in C^k (X,\mathbb{F}_2)$ we want to measure the norm of $\phi$ up to $\psi \in B^k (X,\mathbb{F}_2)$. This can be done in several ways of optimality:
\begin{definition}

Let $0 \leq k \leq n-1$ and $\phi \in C^k (X,\mathbb{F}_2)$. 
\begin{enumerate}
\item $\phi$ is called minimal if for every $\varphi \in B^k (X,\mathbb{F}_2)$, we have that 
$$\Vert \phi \Vert \leq \Vert \phi - \varphi \Vert.$$
\item For $1 \leq k$, $\phi$ is called locally minimal if for every $\lbrace v \rbrace \in X^{(0)}$, $\phi_{\lbrace v \rbrace}$ is minimal in $X_{\lbrace v \rbrace}$ (this definition is taken from \cite{KKL}), i.e., if for every $\lbrace v \rbrace \in X^{(0)}$ and every $\varphi \in B^{k-1} (X_{\lbrace v \rbrace}, \mathbb{F}_2)$ we have that 
$$\Vert \phi_{\lbrace v \rbrace} \Vert \leq \Vert \phi_{\lbrace v \rbrace} - \varphi \Vert.$$ 
For $k=0$, $\phi \in C^0 (X,\mathbb{F}_2)$ is called locally minimal if it is minimal.
\item For $\varepsilon>0$, $k\geq 1$, $\phi \in C^k (X, \mathbb{F}_2)$ is called $\varepsilon$-locally minimal if for every $0 \leq j \leq k-1$ and every $\tau \in X^{(j)}$ we have that 
$$\forall \varphi \in B^{k-j-1} (X_\tau, \mathbb{F}_2), \Vert \phi_\tau \Vert \leq \Vert \phi_\tau - \varphi \Vert + \varepsilon m(\tau).$$
For $\phi \in  C^0 (X, \mathbb{F}_2)$, $\phi$ is called $\varepsilon$-locally minimal if 
$$\Vert \phi \Vert \leq \dfrac{(1+\varepsilon) m(X^{(0)})}{2}.$$
\end{enumerate}
\end{definition}

\begin{remark}
One can show that 
$$\phi \text{ is minimal } \Rightarrow \phi \text{ is locally minimal } \Rightarrow \phi \text{ is } \varepsilon-\text{locally minimal for every } \varepsilon>0.$$
One can also show that the reverse implications are false. We shall make no use of these facts, so the proofs are left to the reader. 
\end{remark}
\begin{remark}
\label{epsilon locally minimal implies epsilon very locally minimal}
It is not hard to see that for every $\phi \in C^k (X,\mathbb{F}_2)$, if $\phi$ is $\varepsilon$-locally minimal, then for every $\tau \in X^{(k-1)}$ we have that 
$$\Vert \phi_\tau \Vert \leq \dfrac{(1+\varepsilon)m(\tau)}{2}.$$
\end{remark}
The reason we consider $\varepsilon$-local minimality and not just local minimality (as in \cite{KKL}) is the following lemma:
\begin{lemma}
\label{inequality for varepsilon-locally min}
For every $\phi \in C^k (X, \mathbb{F}_2)$ and every $\varepsilon>0$  there is $\psi \in C^{k-1} (X,\mathbb{F}_2)$ such that $\phi - d \psi$ is $\varepsilon$-locally minimal and
$$\Vert \phi \Vert \geq \Vert \phi - d \psi \Vert + \varepsilon \Vert \psi \Vert. $$  
\end{lemma}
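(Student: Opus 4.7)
The plan is to produce $\psi$ by a single global minimization. Define the functional $F: C^{k-1}(X,\mathbb{F}_2) \to \mathbb{R}_{\geq 0}$ by $F(\psi) := \Vert \phi - d\psi \Vert + \varepsilon \Vert \psi \Vert$, let $\psi_*$ be a minimizer of $F$ (which exists since $C^{k-1}(X,\mathbb{F}_2)$ is finite), and set $\phi_* := \phi - d\psi_*$. The norm inequality in the conclusion is then automatic from $F(\psi_*) \leq F(0) = \Vert \phi \Vert$, so the entire content of the lemma is to show that $\phi_*$ is $\varepsilon$-locally minimal.

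I would prove this by contradiction. Assume $k \geq 1$ and that $\phi_*$ is not $\varepsilon$-locally minimal; by definition, this yields some $j \in \{0,\dots,k-1\}$, some $\tau \in X^{(j)}$, and some $\varphi \in B^{k-j-1}(X_\tau,\mathbb{F}_2)$ with
$$\Vert (\phi_*)_\tau - \varphi \Vert < \Vert (\phi_*)_\tau \Vert - \varepsilon m(\tau).$$
Write $\varphi = d_\tau \eta$ and take the natural lift $\tilde\eta \in C^{k-1}(X,\mathbb{F}_2)$ defined by $\tilde\eta(\sigma) := \eta(\sigma\setminus\tau)$ when $\tau \subseteq \sigma$ and $\tilde\eta(\sigma) := 0$ otherwise. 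The aim is to show $F(\psi_* + \tilde\eta) < F(\psi_*)$, contradicting the choice of $\psi_*$.

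This reduces to two computations. First, a direct unwinding of the definitions shows that $d\tilde\eta$ is supported on $k$-simplices containing $\tau$, with $(d\tilde\eta)(\tau \cup \sigma'') = \varphi(\sigma'')$ for every $\sigma'' \in X_\tau^{(k-j-1)}$; separating the contributions from simplices containing and not containing $\tau$ then gives the identity $\Vert \phi_* - d\tilde\eta \Vert = \Vert \phi_* \Vert - \Vert (\phi_*)_\tau \Vert + \Vert (\phi_*)_\tau - \varphi \Vert$, so $\Vert \phi_* - d\tilde\eta \Vert < \Vert \phi_* \Vert - \varepsilon m(\tau)$. Second, the lift is cheap: $\Vert \tilde\eta \Vert = \Vert \eta \Vert_{X_\tau} \leq m_\tau(X_\tau^{(k-j-2)}) = m(\tau)/(k-j-1)! \leq m(\tau)$, where the middle equality is Proposition \ref{weight of X^k compared to X^l} applied inside the link together with $m_\tau(X_\tau^{(-1)}) = m(\tau)$. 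Combining these with $\Vert \psi_* + \tilde\eta \Vert \leq \Vert \psi_* \Vert + m(\tau)$ yields
$$F(\psi_* + \tilde\eta) < \bigl(\Vert \phi_* \Vert - \varepsilon m(\tau)\bigr) + \varepsilon\bigl(\Vert \psi_* \Vert + m(\tau)\bigr) = F(\psi_*),$$
the desired contradiction. The edge case $k=0$ reduces to flipping $\psi_* \in \{0,1\} = C^{-1}(X,\mathbb{F}_2)$ when the condition $\Vert \phi_* \Vert \leq (1+\varepsilon) m(X^{(0)})/2$ fails, which is an essentially identical but shorter calculation.

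The delicate point is the bound $\Vert \tilde\eta \Vert \leq m(\tau)$: the contradiction hinges on the $\varepsilon$-penalty $\varepsilon \Vert \tilde\eta \Vert$ being dominated by the gain $\varepsilon m(\tau)$, and this relies precisely on the normalization $m_\tau(X_\tau^{(k-j-2)}) = m(\tau)/(k-j-1)!$. It is this quantitative feature of the weight function that allows one to force $\varepsilon$-local minimality at every level $j$, not merely at the top level $j=k-1$ where the bound is immediate.
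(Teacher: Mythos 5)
Your proof is correct, and it takes a recognizably different route from the paper's while sharing the core computation. Both arguments hinge on the same local-improvement lift: from a witness $\eta\in C^{k-j-2}(X_\tau,\mathbb{F}_2)$ against $\varepsilon$-local minimality at $\tau$, build $\tilde\eta\in C^{k-1}(X,\mathbb{F}_2)$, observe that $d\tilde\eta$ is supported on $k$-simplices containing $\tau$ with $(d\tilde\eta)(\tau\cup\sigma'')=(d_\tau\eta)(\sigma'')$, and bound the cost by $\Vert\tilde\eta\Vert=\Vert\eta\Vert_{X_\tau}\le m_\tau(X_\tau^{(k-j-2)})=m(\tau)/(k-j-1)!\le m(\tau)$. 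Where you diverge is in the organizing principle. The paper runs a greedy iteration: repeatedly find a violation, subtract the lift, note that the norm drops by at least $\varepsilon\min_\tau m(\tau)$ each step (so the process terminates), then telescope the chain of inequalities and finish with the triangle inequality $\Vert\psi_1+\cdots+\psi_l\Vert\le\sum_i\Vert\psi_i\Vert$. You instead minimize $F(\psi)=\Vert\phi-d\psi\Vert+\varepsilon\Vert\psi\Vert$ over the finite space $C^{k-1}(X,\mathbb{F}_2)$ and show any minimizer works by a one-step contradiction. The variational framing buys a shorter write-up: $\Vert\phi\Vert\ge\Vert\phi-d\psi_*\Vert+\varepsilon\Vert\psi_*\Vert$ is immediate from $F(\psi_*)\le F(0)$, the termination argument is absorbed into existence of a minimizer on a finite set, and only a single application of the triangle inequality ($\Vert\psi_*+\tilde\eta\Vert\le\Vert\psi_*\Vert+\Vert\tilde\eta\Vert$) is needed. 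The paper's iterative version is, in effect, an explicit descent on $F$; yours observes that any global minimizer already suffices. One small thing worth making explicit in a final write-up is the triangle inequality for the $\mathbb{F}_2$-norm (supports add under symmetric difference), since it is used silently in the step $\Vert\psi_*+\tilde\eta\Vert\le\Vert\psi_*\Vert+m(\tau)$.
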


\begin{proof}
Fix $\varepsilon>0$. Let $\phi \in C^k(X, \mathbb{F}_2)$. If $\phi$ is $\varepsilon$-locally minimal we are done by taking $\psi \equiv 0$. Assume that $\phi$ is not $\varepsilon$-locally minimal. If $k=0$ and $\phi \in C^0 (X,\mathbb{F}_2)$, then 
$$\Vert \phi \Vert > \dfrac{(1+\varepsilon) m(X^{(0)})}{2}.$$
Take $\psi \in C^{-1} (X,\mathbb{F}_2)$ to be $\psi (\emptyset)=1$, then
\begin{dmath*}
\Vert \phi - d \psi \Vert < \dfrac{(1-\varepsilon) m(X^{(0)})}{2} = \dfrac{(1+\varepsilon) m(X^{(0)})}{2} - \varepsilon m(X^{(0)}) < \Vert \phi \Vert - \varepsilon m(X^{(0)}) = \Vert \phi \Vert - \varepsilon \Vert \psi \Vert,
\end{dmath*}
and we are done. Assume next that $k \geq 1$ and $\phi \in C^k (X,\mathbb{F}_2)$ is not $\varepsilon$-locally minimal. Then there is $0 \leq j \leq k-1$ and $\tau \in X^{(j)}$, $\psi_1' \in C^{k-j-2} (X_\tau, \mathbb{F}_2)$ such that 
$$\Vert \phi_\tau \Vert > \Vert \phi_\tau - d_\tau \psi_1' \Vert + \varepsilon m(\tau).$$ 
Define $\psi_1 \in C^{k-1} (X,\mathbb{F}_2)$ as 
$$\psi_1 (\eta) = \begin{cases}
\psi_1' (\eta \setminus \tau) & \tau \subset \eta \\
0 & \tau \not\subset \eta
\end{cases}.$$
Note the following: for $\sigma \in X^{(k)}$, if $\tau \not\subset \sigma$ then $d \psi_1 (\sigma ) =0$. Also note that for  $\sigma \in X^{(k)}$, if $\tau \subset \sigma$, then 
\begin{dmath*}
d \psi_1 (\sigma) = \sum_{\eta \in X^{(k-1)}, \eta \subset \sigma} \psi_1 (\eta) =  \sum_{\eta \in X^{(k-1)}, \eta \subset \sigma, \tau \subset \eta} \psi_1 (\eta) = \sum_{\eta \in X^{(k-1)}, \eta \subset \sigma, \tau \subset \eta} \psi_1' (\eta \setminus \tau) = d_\tau \psi_1' (\sigma \setminus \tau) = d_\tau \psi_1' (\sigma \setminus \tau).
\end{dmath*}
Therefore
\begin{dmath*}
\Vert \phi - d \psi_1 \Vert = \sum_{\sigma \in supp (\phi -  d \psi_1), \tau \not\subset \sigma} m(\sigma) + \sum_{\sigma \in supp (\phi -  d \psi_1), \tau \subset \sigma} m(\sigma) = \\ 
\sum_{\sigma \in supp (\phi), \tau \not\subset \sigma} m(\sigma)  + \sum_{\tau \subset \sigma, (\sigma \setminus \tau) \in supp (\phi_\tau -  d_\tau \psi_1')} m(\sigma) = \\
\sum_{\sigma \in supp (\phi), \tau \not\subset \sigma} m(\sigma) + \Vert \phi_\tau -  d_\tau \psi_1' \Vert < \\
 \sum_{\sigma \in supp (\phi), \tau \not\subset \sigma} m(\sigma) + \Vert \phi_\tau \Vert - \varepsilon m(\tau).
\end{dmath*}
Note that $\Vert \psi_1 \Vert = \Vert \psi_1' \Vert$ and that by corollary \ref{weight in l dim simplices} 
$$\Vert \psi_1' \Vert \leq \sum_{\theta \in X_\tau^{(k-j-2)}} m_\tau (\theta) = \sum_{\eta \in X^{(k-1)}, \tau \subset \eta} m(\eta) = \dfrac{1}{(k-1-j)!} m(\tau) \leq m(\tau).$$
Combined with the previous inequality, this yields
$$\Vert \phi \Vert \geq \Vert \phi - d \psi_1 \Vert + \varepsilon \Vert \psi_1 \Vert.$$
Define $\phi_1 = \phi - d \psi_1$. If $\phi_1$ is $\varepsilon$-locally minimal we are done by taking $\psi = \psi_1$. Otherwise repeat the same process to get $\psi_2 \in C^k (X,\mathbb{F}_2)$ and 
$$\Vert \phi_1 \Vert \geq \Vert \phi_1 - d \psi_2 \Vert + \varepsilon \Vert \psi_2 \Vert.$$
We continue this way until we get $\phi_l$ that is $\varepsilon$-locally minimal. Note that for every $i \leq l-1$ we have that  
$$\Vert \phi_i \Vert \geq \Vert \phi_{i+1} \Vert + \varepsilon \min \lbrace m(\tau) : \tau \in X^{(k-1)} \rbrace,$$
and therefore this procedure ends after finitely many steps. Define $\psi = \psi_1+...+\psi_l$ and note that $\phi_l = \phi - d \psi$ is $\varepsilon$-locally minimal and that
$$\Vert \phi - d \psi \Vert \leq \Vert \phi_1 - d (\psi_2+...+\psi_l) \Vert - \varepsilon \Vert \psi_1 \Vert \leq ... \leq \Vert \phi \Vert - \varepsilon ( \Vert \psi_1 \Vert + ... + \Vert \psi_l \Vert).$$
By the triangle inequality for the norm we get that
$$\Vert \phi \Vert \geq \Vert \phi - d \psi \Vert + \varepsilon \Vert \psi \Vert.$$
\end{proof}

\section{Criteria for topological overlapping}

To prove topological overlapping we'll relay on the following criterion taken from \cite{KKL}. In the theorem below, the norm $\Vert . \Vert$ refers to the norm with respect to the normalizes homogeneous weight $\overline{m_h}$ (also see remark below).

\begin{theorem}
\label{overlapping from KKL}
For constants $\mu > 0, \nu >0$, there is $c=c(n,\mu ,\nu)>0$ such that if $X$ is a finite simplicial complex (with the norm $\overline{m_h}$) satisfying:
\begin{enumerate}
\item For every $0 \leq k \leq n-1$, we have that
$$\mu_k (X) \leq \mu .$$
\item For every $0 \leq k \leq n-1$, we have that
$$\min \lbrace \Vert \phi \Vert : \phi \in Z^k (X,\mathbb{F}_2 ) \setminus B^k (X,\mathbb{F}_2 ) \rbrace \geq \nu.$$
\end{enumerate}
Then $X$ has the $c$-topological overlapping property.
\end{theorem}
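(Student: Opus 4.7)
The plan is to follow the Gromov-style cascade/filling argument, in the form adapted in \cite{KKL}, reducing the geometric topological overlapping statement to a sequence of $\mathbb{F}_2$-cohomological filling problems controlled by hypotheses $(1)$ and $(2)$.

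Step 1 (Construction of cochains from a generic point). Fix a continuous $\widetilde{f} : X \to \mathbb{R}^n$. Following Gromov, choose a generic point $z \in \mathbb{R}^n$ together with a generic coordinate frame through $z$. For each $0 \leq k \leq n$ this determines a codimension-$k$ affine subspace $L_{n-k} \subseteq \mathbb{R}^n$ passing through $z$, and one defines a random cochain $\phi^{(k)} \in C^k(X,\mathbb{F}_2)$ where $\phi^{(k)}(\sigma)$ records, modulo $2$, the number of transverse intersections of $\widetilde{f}(\sigma)$ with $L_{n-k}$. In particular, $\phi^{(n)}(\sigma) = 1$ iff $z \in \widetilde{f}(\sigma)$, so that $\Vert \phi^{(n)} \Vert \geq c$ (under $\overline{m_h}$) is exactly the desired topological overlap of $\widetilde{f}$ at $z$.

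Step 2 (Cohomological cascade). A standard transversality and Stokes-in-$\mathbb{F}_2$ argument shows that, for generic $z$ and frame, each $\phi^{(k)}$ is a cocycle and satisfies the relation $d \phi^{(k-1)} = \phi^{(k)}$; in particular $\phi^{(k)} \in B^k(X,\mathbb{F}_2)$. The cofilling hypothesis $(1)$ then produces $\psi^{(k-1)}$ with $d \psi^{(k-1)} = \phi^{(k)}$ and $\Vert \psi^{(k-1)} \Vert \leq \mu \Vert \phi^{(k)} \Vert$. The difference $\psi^{(k-1)} - \phi^{(k-1)}$ is a cocycle, and by the systolic hypothesis $(2)$ it is either a coboundary (controlled again by $\mu$) or a nontrivial cocycle of norm $\geq \nu$; in the latter case $\Vert \phi^{(k-1)} \Vert \geq \nu - \Vert \psi^{(k-1)} \Vert$, giving a macroscopic lower bound that propagates through the cascade.

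Step 3 (Base case and conclusion). At the bottom of the cascade, $\phi^{(0)}$ encodes which side of a hyperplane each vertex image $\widetilde{f}(v)$ lies on; a standard median argument lets us choose $z$ so that $\Vert \phi^{(0)} \Vert$ is a definite fraction of $m(X^{(0)})$, which is a constant depending only on $n$ under the normalization $\overline{m_h}$. Unwinding the inequalities $\Vert \phi^{(k-1)} \Vert \leq \mu \Vert \phi^{(k)} \Vert + \nu$ upward from $k=1$ to $k=n$ yields a lower bound $\Vert \phi^{(n)} \Vert \geq c(n,\mu,\nu) > 0$, and averaging (or pigeonholing) over the choice of $z$ and the frame produces an actual $z$ realizing this bound, hence the $c$-topological overlapping property.

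The main obstacle is Step 2: for a merely continuous $\widetilde{f}$, the equalities $d\phi^{(k-1)} = \phi^{(k)}$ and the cocycle conditions are not literal, and justifying them requires a careful transversality/perturbation argument together with genericity of $z$ so that $\widetilde{f}$ restricted to each skeleton is transverse to every $L_{n-k}$. In \cite{KKL} this is handled by integrating over the position of $z$ and the frame, replacing pointwise identities by averaged inequalities and extracting a good $z$ at the end via pigeonhole; this is also the step where hypothesis $(2)$ genuinely enters, since the systole bound lets one treat the cocycle discrepancy arising from the averaging as negligible unless it is already large enough to conclude the theorem directly.
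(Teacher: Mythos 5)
The paper does not give a proof of this theorem at all: it is imported verbatim from \cite{KKL} (the sentence immediately preceding it reads ``To prove topological overlapping we'll relay on the following criterion taken from \cite{KKL}''), so there is no in-paper proof against which to compare your sketch, and I can only assess it on its own terms.

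Your framework --- Gromov's cochain cascade built from mod-$2$ intersection numbers of $\widetilde f$ with a generic flag of affine subspaces through a random $z$, with the filling bound (hypothesis 1) and systolic bound (hypothesis 2) used to propagate a macroscopic bound --- is indeed the architecture underlying the KKL argument, so the high-level plan is right. But the propagation step is not coherent as written. In Step 2, the non-coboundary branch gives $\Vert \phi^{(k-1)}\Vert \geq \nu - \Vert\psi^{(k-1)}\Vert \geq \nu - \mu\Vert\phi^{(k)}\Vert$, i.e.\ $\Vert\phi^{(k)}\Vert \geq (\nu - \Vert\phi^{(k-1)}\Vert)/\mu$. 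This is only informative when $\Vert\phi^{(k-1)}\Vert < \nu$, whereas your median argument at $k=0$ produces a \emph{large} $\Vert\phi^{(0)}\Vert$ (a definite fraction of $m(X^{(0)})$). In Step 3 you then invoke the opposite-shaped bound $\Vert\phi^{(k-1)}\Vert \leq \mu\Vert\phi^{(k)}\Vert + \nu$ and unwind it upward; this inequality is not derived in Step 2, does not follow from it, and even if taken on faith does not obviously yield a positive $c$ independent of $X$ unless $\Vert\phi^{(0)}\Vert$ dominates $\nu(1+\mu+\cdots+\mu^{n-1})$, which is not guaranteed. The coboundary branch in Step 2 (``controlled again by $\mu$'') is likewise not resolved: refilling at dimension $k-2$ introduces an additional term involving $\Vert\phi^{(k-1)}\Vert$ itself, which is not small, so the cascade does not close as described. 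The missing idea is that one must track the distance from $\phi^{(k)}$ to $B^k$ (a coset norm, not $\Vert\phi^{(k)}\Vert$), fold the filling and systolic hypotheses into a single cosystolic-type inequality, and handle the genuine dependence of the $\phi^{(k)}$ on $z$ and the frame by working with expectations before extracting a good $z$; you correctly flag this last point as the main obstacle, but it is exactly where the sketch stops being a proof. In short: right blueprint, but the cascade inequalities need to be reformulated (coset distances, unified filling-plus-systole bound, averaging) before Steps 2--3 are sound.
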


\begin{remark}
\label{weights remark}
In other reference such as \cite{KKL}, the function that is used to define the norm is 
$$\forall \tau \in X^{(k)}, wt(\tau) = \dfrac{\vert \lbrace \eta \in X^{(n)} : \tau \subseteq \eta \rbrace \vert}{{n+1 \choose k+1} \vert X^{(n)} \vert} = \dfrac{(k+1)! \overline{m_h}(\tau)}{(n+1)!} ,$$
and the norm $\Vert . \Vert_{wt}$ is defined accordingly (as we defined it using $m$). We did not use this function, because since it is not a proper weight function (according to our definition) and therefore using it makes some summation procedures more cumbersome.
However, any inequality on the norm $\Vert . \Vert_{wt}$ can be easily transformed to an inequality about the norm $\Vert . \Vert$ defined by $\overline{m_h}$, by adding a multiplicative constant (dependent only on $n$ and on $k$). Therefore we allowed ourselves to stated the theorem using our norm and not $\Vert . \Vert_{wt}$.
\end{remark}

The idea (taken from \cite{KKL}) is that the conditions in the above theorem can be deduced from isoperimetric inequalities (apart from bounding $\mu_{n-1}$). 
\begin{lemma}
\label{isoperimetry implies conditions for topological overlap}
Let $X$ be a pure $n$-dimensional weighted simplicial complex. Assume there are constants $C_0 >0,...,C_{n-1} >0$ and $\varepsilon >0$ such that for  every $0 \leq k \leq n-1$ and for every $0 \neq \phi \in C^k (X,\mathbb{F}_2)$ we have that
$$\left( \phi \text{ is } \varepsilon \text{-locally minimal and } \Vert \phi \Vert \leq C_k m(X^{(k)}) \right) \Rightarrow \Vert d \phi \Vert >0.$$ 
Denote
$$\mu = \max \lbrace \dfrac{1}{\varepsilon}, \dfrac{1}{C_0}, \dfrac{2}{ C_1},...,\dfrac{n}{ C_{n-1}} \rbrace ,$$ 
$$\nu = \min \lbrace C_0,...,C_{n-1} \rbrace.$$ 
Then:
 \begin{enumerate}
\item For every $0 \leq k \leq n-2$, we have that $\mu_k (X) \leq \mu$ (with $\mu_k (X)$ being the k-th cofilling constant of $X$ defined above).
\item For every $0 \leq k \leq n-1$, we have that
$$\min \lbrace \Vert \phi \Vert : \phi \in Z^k (X,\mathbb{F}_2 ) \setminus B^k (X,\mathbb{F}_2 ) \rbrace \geq \nu m(X^{(k)}).$$
\end{enumerate}
\end{lemma}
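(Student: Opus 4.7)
The plan is to handle both parts uniformly by first invoking Lemma~\ref{inequality for varepsilon-locally min} to replace an arbitrary cochain by an $\varepsilon$-locally minimal one, and then taking the contrapositive of the hypothesis. The crucial (and slightly non-obvious) point to flag at the outset is that for part (1) the hypothesis must be invoked at level $k+1$, not at level $k$: applying it at level $k$ would only reproduce the already known fact that $d\psi \neq 0$ for a small $\varepsilon$-locally minimal would-be filling, which is useless quantitatively.

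Part (2) is essentially a one-liner. Given $\phi \in Z^k(X,\mathbb{F}_2) \setminus B^k(X,\mathbb{F}_2)$, Lemma~\ref{inequality for varepsilon-locally min} produces $\eta$ with $\phi' := \phi - d\eta$ both $\varepsilon$-locally minimal and satisfying $\Vert\phi\Vert \geq \Vert\phi'\Vert$. Since $\phi \notin B^k$, also $\phi' \notin B^k$, so $\phi' \neq 0$ and obviously $d\phi' = d\phi = 0$. The contrapositive of the hypothesis at level $k$ therefore forces $\Vert\phi'\Vert > C_k\, m(X^{(k)}) \geq \nu\, m(X^{(k)})$, and the same bound transfers to $\Vert\phi\Vert$.

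For part (1), fix a nonzero $\phi \in B^{k+1}(X,\mathbb{F}_2)$ with $0 \leq k \leq n-2$. I would view $\phi$ as a cochain in $C^{k+1}$ and apply Lemma~\ref{inequality for varepsilon-locally min} at that level, producing $\xi \in C^k$ such that $\phi' := \phi - d\xi$ is $\varepsilon$-locally minimal in $C^{k+1}$ and $\Vert\phi\Vert \geq \Vert\phi'\Vert + \varepsilon \Vert\xi\Vert$. Since $\phi \in B^{k+1} \subseteq Z^{k+1}$, also $d\phi' = 0$, so the contrapositive of the hypothesis at level $k+1$ (permissible because $k+1 \leq n-1$) splits the argument into two cases: either $\phi' = 0$, or $\Vert\phi'\Vert > C_{k+1}\, m(X^{(k+1)})$. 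In the first case $\xi$ itself is a filling of $\phi$, with $\Vert\xi\Vert \leq \Vert\phi\Vert/\varepsilon \leq \mu \Vert\phi\Vert$. In the second case $\Vert\phi\Vert \geq \Vert\phi'\Vert > C_{k+1}\, m(X^{(k+1)})$ is already large, so any filling $\psi_0 \in C^k$ of $\phi$ (one exists because $\phi \in B^{k+1}$) suffices: the trivial bound $\Vert\psi_0\Vert \leq m(X^{(k)})$, combined with $m(X^{(k)}) = (k+2)\, m(X^{(k+1)})$ from Proposition~\ref{weight of X^k compared to X^l}, yields $\Vert\psi_0\Vert < \frac{k+2}{C_{k+1}} \Vert\phi\Vert \leq \mu \Vert\phi\Vert$, using $\mu \geq (k+2)/C_{k+1}$.

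The hard part is exactly the observation flagged above: the cofilling bound at level $k$ is controlled by the hypothesis at level $k+1$, not at level $k$. Once this is in hand, the two subcases of part (1) align cleanly with the two summands of $\mu$ that are active at level $k$, namely $1/\varepsilon$ (used in the small-$\phi'$ case) and $(k+2)/C_{k+1}$ (used in the large-$\phi'$ case), so no further cleverness seems necessary.
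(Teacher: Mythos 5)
Your proof is correct and follows essentially the same route as the paper: apply Lemma~\ref{inequality for varepsilon-locally min} to reduce to an $\varepsilon$-locally minimal representative, then invoke the hypothesis (at level $k+1$ for part (1), at level $k$ for part (2)) to split into the small and large cases, matching the $1/\varepsilon$ and $(k+2)/C_{k+1}$ terms of $\mu$. Incidentally, your computation $m(X^{(k)}) = (k+2)\,m(X^{(k+1)})$ and the resulting bound $(k+2)/C_{k+1}$ are the correct consequence of Proposition~\ref{weight of X^k compared to X^l}; the paper's proof text writes $(k+1)/C_{k+1}$ at the corresponding step, which appears to be a typo (the definition of $\mu$ in the lemma's statement, with terms $\tfrac{j+1}{C_j}$, agrees with your version).
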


\begin{proof}
We'll start by proving the second assertion. Fix $0 \leq k \leq n-1$ and let $\phi \in Z^k (X,\mathbb{F}_2)$. If $\Vert \phi \Vert \leq C_k m(X^{(k)})$, then by lemma \ref{inequality for varepsilon-locally min} there is $\psi \in C^{k-1} (X,\mathbb{F}_2)$ such that $\phi - d \psi$ is $\varepsilon$-locally minimal and 
$$\Vert \phi \Vert \geq \Vert \phi - d \psi \Vert + \varepsilon \Vert \psi \Vert.$$
In particular, $C_k m(X^{(k)}) \geq \Vert \phi \Vert \geq \Vert \phi - d \psi \Vert$, therefore by our assumptions we have that if $\phi - d \psi \neq 0$, then 
$$\Vert d (\phi - d \psi) \Vert >0,$$
but this is a contradiction to the fact that $\phi \in Z^k (X,\mathbb{F}_2)$. Therefore $\phi = d \psi$, which mean that $\phi \in B^k (X,\mathbb{F}_2)$. This yields that for every $\phi \in Z^k (X,\mathbb{F}_2) \setminus B^k (X,\mathbb{F}_2)$, we have that $\Vert \phi \Vert > C_k m(X^{(k)})$. \\
Next, we'll prove the first assertion of the lemma. Fix $0 \leq k \leq n-2$ and let $\phi \in B^{k+1} (X,\mathbb{F}_2)$. If $\Vert \phi \Vert \geq C_{k+1} m(X^{(k+1)})$, then we have that for every $\psi \in C^k (X,\mathbb{F}_2), d \psi = \phi$, that
$$\dfrac{\Vert \psi \Vert}{\Vert \phi \Vert} \leq \dfrac{m(X^{(k)})}{C_{k+1} m(X^{(k+1)})} = \dfrac{k+1}{C_{k+1}},$$
where the last equality is due to proposition \ref{weight of X^k compared to X^l}. Assume next that $\Vert \phi \Vert \leq C_{k+1} m(X^{(k+1)})$. By lemma \ref{inequality for varepsilon-locally min}, there is $\psi \in C^{k} (X,\mathbb{F}_2)$ such that $\phi - d \psi$ is $\varepsilon$-locally minimal and such that 
$$\Vert \phi \Vert \geq \Vert \phi - d \psi \Vert + \varepsilon \Vert \psi \Vert.$$
Therefore, $\Vert \phi - d \psi \Vert \leq C_{k+1} m(X^{(k+1)})$. Note that $\phi \in B^{k+1} (X,\mathbb{F}_2)$ and therefore $d (\phi - d \psi)=0$. Therefore, we can deduce that $\phi - d \psi =0$. Indeed, otherwise $\phi - d \psi$ is $\varepsilon$-locally minimal and $\Vert \phi - d \psi \Vert \leq C_{k+1} m(X^{(k+1)})$, which mean that $\Vert d (\phi - d \psi) \Vert >0$, which yields a contradiction. So we have that $\phi = d \psi$ and 
$$\Vert \phi \Vert \geq \Vert \phi - d \psi \Vert + \varepsilon \Vert \psi \Vert =\varepsilon \Vert \psi \Vert.$$
Therefore
$$\dfrac{\Vert \psi \Vert}{\Vert \phi \Vert} \leq \dfrac{1}{\varepsilon},$$
and we are done.    
\end{proof}

Using the above lemma combined with the criterion for topological overlap stated above, we can deduce the following:

\begin{lemma}
\label{topological overlap of the n-1 skeleton based on isoperimetric ineq}
Let $1 \leq l \leq n-1,  M \geq 1, C_0 >0,...,C_{l} >0, \varepsilon >0$. There is a constant $c = c(M,C_0,...,C_{l}, \varepsilon)$ such that for every simplicial complex $X$ of dimension $n>1$, if 
\begin{enumerate}
\item For  every $0 \leq k \leq l$ and for every $0 \neq \phi \in C^k (X,\mathbb{F}_2)$ we have that
$$\left( \phi \text{ is } \varepsilon \text{-locally minimal and } \Vert \phi \Vert \leq C_k \overline{m_h}(X^{(k)}) \right) \Rightarrow \Vert d \phi \Vert >0.$$ 
\item We have that
$$\dfrac{\sup_{\sigma \in X^{(l)}} \vert \lbrace \eta \in X^{(n)} : \sigma \subset \eta \rbrace \vert }{\inf_{\sigma \in X^{(l)}} \vert \lbrace \eta \in X^{(n)} : \sigma \subset \eta \rbrace \vert } \leq M .$$
\end{enumerate} 
Then the $l$-skeleton of $X$ has $c$-topological overlapping property.
\end{lemma}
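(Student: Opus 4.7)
The plan is to apply Theorem \ref{overlapping from KKL} to the $l$-skeleton $Y := X^{(\leq l)}$, viewed as a pure $l$-dimensional simplicial complex equipped with its normalized homogeneous weight $\overline{m_h^Y}$. The first step is the weight comparison: a routine double counting (each $n$-simplex containing $\tau \in X^{(k)}$ is counted once per $l$-simplex $\sigma$ with $\tau \subseteq \sigma \subseteq \eta$, and there are $\binom{n-k}{l-k}$ such $\sigma$ for each $\eta$) together with hypothesis (2) shows that for every $0 \leq k \leq l$ and every $\tau \in X^{(k)}$,
$$\frac{(n-l)!\binom{n+1}{l+1}}{M}\,\overline{m_h^Y}(\tau) \;\leq\; \overline{m_h}(\tau) \;\leq\; (n-l)!\binom{n+1}{l+1}\, M\cdot \overline{m_h^Y}(\tau),$$
so the two weights on $Y$'s simplices are pointwise comparable up to a factor depending only on $M, n, l$.

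Second, I would work on $Y$ equipped with the inherited weight $\overline{m_h}$ (which is a valid weight on $Y$, since the weight identity $m(\tau) = \sum_{\tau \subset \sigma} m(\sigma)$ holds in $X$ at every level $k \leq l-1$ and only involves simplices of $Y$). For every $k \leq l$, the $k$-cochains, differentials, localizations $\phi_\tau$, coboundary spaces $B^{k-j-1}(Y_\tau,\mathbb{F}_2) = B^{k-j-1}(X_\tau,\mathbb{F}_2)$, and norms all coincide for $Y$ and for $X$; in particular the notion of $\varepsilon$-local minimality agrees. Hence hypothesis (1) supplies exactly the isoperimetric inequality required by Lemma \ref{isoperimetry implies conditions for topological overlap}, at every level $k = 0, \ldots, l$. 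Because the proof of that lemma deduces $\mu_k \leq \mu$ from the single-level hypothesis at level $k+1$, the same proof applied to $Y$ yields $\mu_k(Y)_{\overline{m_h}} \leq \mu$ for all $0 \leq k \leq l-1$ (the top index $k = l-1$ is covered precisely because the hypothesis is available at the extra level $l$), together with the cohomology-norm bounds $\min\{\Vert\phi\Vert_{\overline{m_h}} : \phi \in Z^k(Y,\mathbb{F}_2) \setminus B^k(Y,\mathbb{F}_2)\} \geq \nu\,\overline{m_h}(Y^{(k)})$ for $0 \leq k \leq l-1$, with $\mu, \nu$ depending only on $\varepsilon, C_0, \ldots, C_l$.

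Finally, using the comparability of the first step, these bounds transfer to the weight $\overline{m_h^Y}$: the ratio $\mu_k(Y)_{\overline{m_h^Y}} / \mu_k(Y)_{\overline{m_h}}$ is at most a combinatorial multiple of $M^2$, and the cohomology-norm lower bounds become $\min\Vert\phi\Vert_{\overline{m_h^Y}} \geq \nu'$ for a positive constant $\nu'$ depending only on $M, \nu, n, l$ (here I use that $\overline{m_h^Y}(Y^{(k)}) = (l+1)!/(k+1)!$ is itself a positive constant by Proposition \ref{weight of X^k compared to X^l}). Both hypotheses of Theorem \ref{overlapping from KKL} are thereby verified on $Y$ with its native weight, delivering a constant $c > 0$ depending only on $M, C_0, \ldots, C_l, \varepsilon$ (and implicitly $n, l$) such that $Y$ has $c$-topological overlap. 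The main obstacle is carrying out the weight-comparison double count cleanly while tracking the combinatorial factors and the ratio bound $M$; once this is in place, the observation that the proof of Lemma \ref{isoperimetry implies conditions for topological overlap} extends to yield $\mu_{l-1}(Y)$ is immediate from the extra hypothesis at level $l$.
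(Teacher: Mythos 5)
Your proof is correct and follows essentially the same route as the paper: establish the pointwise comparability of $\overline{m_h}$ with the $l$-skeleton's native homogeneous weight up to the factor $(n-l)!\binom{n+1}{l+1}M^{\pm 1}$ using hypothesis (2), obtain the cofilling and cohomology-norm bounds from Lemma \ref{isoperimetry implies conditions for topological overlap} (your explicit remark that the level-$l$ hypothesis is what supplies $\mu_{l-1}$ is the same implicit use made in the paper), transfer these bounds across the two comparable weights, and apply Theorem \ref{overlapping from KKL} to the $l$-skeleton.
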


\begin{proof}
In lemma \ref{isoperimetry implies conditions for topological overlap}, we showed that there are $\mu = \mu (C_0,...,C_{l}, \varepsilon)$, $\nu = \nu ((C_0,...,C_{l})$ such that 
  \begin{enumerate}
\item For every $0 \leq k \leq l-1$, we have that $\mu_k (X) \leq \mu$ (with $\mu_k (X)$ being the k-th cofilling constant of $X$ defined above).
\item For every $0 \leq k \leq l-1$, we have that
$$\min \lbrace \Vert \phi \Vert : \phi \in Z^k (X,\mathbb{F}_2 ) \setminus B^k (X,\mathbb{F}_2 ) \rbrace \geq \nu \overline{m_h} (X^{(k)}) = \nu \dfrac{(n+1)!}{(k+1)!} .$$
\end{enumerate}
However, one should note that we cannot apply theorem \ref{overlapping from KKL} to the $l$-skeleton yet, since we have to deal with the following issue: all the inequalities states above refer to the norm $\overline{m_h}$ on $X$ that is based on the $n$-dimensional simplices. To apply theorem \ref{overlapping from KKL} on the $l$-skeleton, we shall need similar inequalities when all the norms are computed with respect to the norm $\overline{m_{h,l}}$ defined using the $l$ simplices as:
$$\forall 0 \leq k \leq l, \forall \tau \in X^{(k)}, \overline{m_{h,l}} (\tau ) = \dfrac{(l-k)! \vert \lbrace \eta \in X^{(l)} : \tau \subseteq \eta \rbrace \vert}{\vert X^{(l)} \vert} .$$
Therefore, we'll need to compare the norm calculated by $\overline{m_{h,l}}$ to the norm calculated by $\overline{m_{h}}$.  
Denote the norm with respect to $\overline{m_{h,l}}$ as $\Vert . \Vert_{l}$. Also denote
$$M_1 = \inf_{\sigma \in X^{(l)}} \vert \lbrace \eta \in X^{(n)} : \sigma \subset \eta \rbrace \vert ,$$
$$M_2 = \sup_{\sigma \in X^{(l)}} \vert \lbrace \eta \in X^{(n)} : \sigma \subset \eta \rbrace \vert.$$
Then we have that for every $\sigma \in X^{(l)}$ that 
$$M_1 \leq \vert \lbrace \eta \in X^{(n)} : \sigma \subset \eta \rbrace \vert \leq M_2,$$
and
$$ \dfrac{M_1}{{n+1 \choose l+1}}\vert X^{(l)} \vert \leq \vert X^{(n)} \vert \leq  \dfrac{M_2}{{n+1 \choose l+1}} \vert X^{(l)} \vert.$$
Therefore we have for every $\sigma \in X^{(l)}$ that 
$$\dfrac{(n-l)! {n+1 \choose l+1}}{M} \overline{m_{h,l}} (\sigma) =\dfrac{(n-l)! M_1}{\frac{M_2}{{n+1 \choose l+1}} \vert X^{(l)} \vert} \leq \dfrac{(n-l)! \vert \lbrace \eta \in X^{(n)} : \sigma \subset \eta \rbrace \vert}{\vert X^{(n)} \vert} = \overline{m_{h}} (\sigma ).$$
Similarly for every $\sigma \in X^{(l)}$
$$\overline{m_{h}} (\sigma ) \leq \dfrac{(n-l)! {n+1 \choose l+1}  M_2}{M_1 \vert X^{(l)} \vert} = (n-l)! {n+1 \choose l+1} M \overline{m_{h,l}} (\sigma).$$
Therefore, by the definition of the weight function, we have that for every $0 \leq k \leq l$ and every $\tau \in X^{(k)}$ the following 
$$\dfrac{(n-l)! {n+1 \choose l+1}}{M} \overline{m_{h,l}} (\tau)   \leq  \overline{m_{h}} (\tau) \leq (n-l)! {n+1 \choose l+1} M \overline{m_{h,l}} (\tau) .$$
This in turn yields that for every $0 \leq k \leq l$ and every $\phi \in C^k (X,\mathbb{F}_2)$ we have that
$$\dfrac{(n-l)! {n+1 \choose l+1}}{M}  \Vert \phi \Vert_{l} \leq \Vert \phi \Vert  \leq (n-l)! {n+1 \choose l+1} M\Vert \phi \Vert_{l} .$$
Therefore any inequality stated in the usual norm $\Vert . \Vert$ of $X$ can be transformed to an inequality in the "$l$-skeleton norm" (the constants may change as $M$ changes). Explicitly, let $X'$ be the $l$-skeleton of $X$, with the norm $\Vert . \Vert_{l}$, then  with $\mu, \nu$ as above we have that
\begin{enumerate}
\item For every $0 \leq k \leq l-1$, we have that $\mu_k (X') \leq M^2 \mu$.
\item For every $0 \leq k \leq l-1$, we have that
$$\min \lbrace \Vert \phi \Vert_{l} : \phi \in Z^k (X',\mathbb{F}_2 ) \setminus B^k (X',\mathbb{F}_2 ) \rbrace \geq \dfrac{1}{M}  \dfrac{(l+1)!}{(k+1)!} \nu .$$
\end{enumerate}
Therefore we are done by applying theorem \ref{overlapping from KKL} to $X'$.
\end{proof}

\section{Isoperimetric inequalities}
In this section we shall prove the main result of this paper. The main idea of this result and its proof are taken from \cite{KKL}.
Following \cite{KKL}, we define the notion of thick and thin:
\begin{definition}
Let $0 < \delta <1 , 0 < r \leq 1$ and $X$ be a pure $n$-dimensional weighted simplicial complex. Define the following: 
\begin{enumerate}
\item $\phi \in C^0 (X,\mathbb{F}_2)$ will be called $\delta$-thin if
$$\Vert \phi \Vert \leq \delta m(X^{(0)}).$$
Otherwise, we shall call $\phi$ $\delta$-thick.
\item For $k>0$ and $\phi \in C^k (X,\mathbb{F}_2)$, we shall call $\tau \in X^{(k-1)}$, $\delta$-thin, is $\phi_\tau \in  C^0 (X_\tau,\mathbb{F}_2)$ if $\delta$-thin, i.e., if
$$\Vert \phi_\tau \Vert \leq \delta m(\tau) .$$
Otherwise, we shall call $\tau$ $\delta$-thick. Denote 
$$A_\delta = \lbrace \tau \in X^{(k-1)} : \tau \text{ is } \delta \text{-thin} \rbrace.$$
\item For $k>0$ and $\phi \in C^k (X,\mathbb{F}_2)$, we shall call $\phi$ $(r, \delta)$-thin, if
$$\sum_{\tau \in A_\delta} \Vert \phi_\tau \Vert \geq r (k+1) \Vert \phi \Vert  .$$
Otherwise, $\phi$ will be called $(r, \delta)$-thick.
\end{enumerate}
\end{definition}

\begin{lemma}
\label{thin has laplacian inequality}
Let $X$ be a pure $n$-dimensional weighted simplicial. Let $0 < \varepsilon <1 , 0 < \delta <\frac{1}{2} , 0 < r \leq 1$. Denote 
$$\lambda_0 = \lambda (X),$$
$$k \geq 1, \lambda_{k} = \min_{\tau \in X^{(k-1)}} \lambda (X_\tau),$$
(see \ref{Links and spectral gaps subsection} to recall the definition of $\lambda (X_\tau)$ and some facts about it). \\
\begin{enumerate}
\item If $\phi \in C^{0} (X,\mathbb{F}_2)$ is $\delta$-thin, then
$$\Vert d \phi \Vert \geq \lambda_0 (1-\delta) \Vert \phi \Vert.$$
\item For $k>0$, if $\phi \in C^{k} (X,\mathbb{F}_2)$ is $\varepsilon$-locally minimal and $(r,\delta)$-thin, then
$$\Vert d \phi \Vert  \geq   \left(  \left(   \dfrac{r+1}{2}   - \delta  - \dfrac{\varepsilon}{2} \right)  \lambda_k (k+1)  - k \right) \Vert \phi \Vert  .$$
\end{enumerate}

\end{lemma}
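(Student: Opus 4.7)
The plan is to handle the two parts separately, with (2) reducing to (1) via the link decomposition.

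For (1), a $0$-cochain $\phi$ is just the indicator of $U := \mathrm{supp}(\phi) \subseteq X^{(0)}$, so by unwinding the definitions $\Vert\phi\Vert = m(U)$ and $\Vert d\phi\Vert = m(U, X^{(0)}\setminus U)$. The $\delta$-thin hypothesis gives $m(X^{(0)}\setminus U) \geq (1-\delta)m(X^{(0)})$, and I would apply the weighted Cheeger inequality, proposition \ref{Cheeger for weighted graphs proposition}(1), to the $1$-skeleton of $X$ to conclude directly.

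For (2), the first step is to invoke lemma \ref{F_2 differential norm and (k-1)-localization proposition}, which gives the descent inequality
$$\Vert d\phi\Vert \;\geq\; \sum_{\tau \in X^{(k-1)}} \Vert d_\tau\phi_\tau\Vert \;-\; k\Vert\phi\Vert.$$
Each $\phi_\tau$ lies in $C^0(X_\tau,\mathbb{F}_2)$, so part (1) applies to each link as soon as one knows how thin $\phi_\tau$ is in $X_\tau$. Here I would split into two cases. If $\tau \in A_\delta$, by definition $\phi_\tau$ is literally $\delta$-thin in $X_\tau$, and part (1) yields $\Vert d_\tau\phi_\tau\Vert \geq \lambda_k(1-\delta)\Vert\phi_\tau\Vert$. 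If $\tau \notin A_\delta$, the $\varepsilon$-local minimality of $\phi$ together with remark \ref{epsilon locally minimal implies epsilon very locally minimal} forces $\Vert\phi_\tau\Vert \leq (1+\varepsilon)m(\tau)/2$, i.e.\ $\phi_\tau$ is $\tfrac{1+\varepsilon}{2}$-thin (this is where the hypothesis $\varepsilon < 1$ is essential so that this fraction stays below $1$), so part (1) now gives $\Vert d_\tau\phi_\tau\Vert \geq \lambda_k \tfrac{1-\varepsilon}{2}\Vert\phi_\tau\Vert$.

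To close the argument, I would combine these two bounds with the global identity $\sum_\tau\Vert\phi_\tau\Vert = (k+1)\Vert\phi\Vert$ from proposition \ref{F_2 norm from links proposition} and the $(r,\delta)$-thinness hypothesis $S := \sum_{\tau\in A_\delta}\Vert\phi_\tau\Vert \geq r(k+1)\Vert\phi\Vert$. The assumption $\delta < 1/2$ makes the coefficient $1-\delta$ strictly larger than $\tfrac{1-\varepsilon}{2}$, so the resulting linear expression in $S$ is increasing in $S$ and is minimized at $S = r(k+1)\Vert\phi\Vert$, yielding the lower bound $\lambda_k(k+1)\Vert\phi\Vert\bigl[(1-\delta)r + \tfrac{1-\varepsilon}{2}(1-r)\bigr]$ for $\sum_\tau\Vert d_\tau\phi_\tau\Vert$. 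A short algebraic check (the difference reduces to $\delta(1-r) + r\varepsilon/2 \geq 0$) shows this dominates $\lambda_k(k+1)\Vert\phi\Vert\bigl(\tfrac{r+1}{2} - \delta - \tfrac{\varepsilon}{2}\bigr)$, and subtracting $k\Vert\phi\Vert$ as in the descent inequality yields the claim. The only real subtlety is the bifurcation into thin versus thick links together with the use of $\varepsilon$-local minimality to control the thick side; everything else is bookkeeping.
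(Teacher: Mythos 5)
Your proposal is correct and follows the same route as the paper: reduce to links via Lemma~\ref{F_2 differential norm and (k-1)-localization proposition}, apply part (1) on each link with two different thinness parameters ($\delta$ for $\tau\in A_\delta$ and $\tfrac{1+\varepsilon}{2}$ via Remark~\ref{epsilon locally minimal implies epsilon very locally minimal} for the rest), and close with Proposition~\ref{F_2 norm from links proposition} and the $(r,\delta)$-thin hypothesis. The only cosmetic difference is that you extremize in $S=\sum_{\tau\in A_\delta}\Vert\phi_\tau\Vert$ and then verify the coefficient inequality at the end, while the paper rearranges the coefficients first and relaxes $r\delta+(1-r)\tfrac{\varepsilon}{2}$ to $\delta+\tfrac{\varepsilon}{2}$ at the last step; both yield the same bound.
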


\begin{proof}
\begin{enumerate}
\item Let $\phi \in C^{0} (X,\mathbb{F}_2)$ that is $\delta$-thin. Note that by definition
$$\Vert d \phi \Vert = m (supp (\phi), X^{(0)} \setminus supp (\phi) ),$$
and 
$$\Vert \phi \Vert = m(supp (\phi) ) .$$
Therefore, the assumption that $\phi$ is $\delta$-thin, yields that 
$$ m(supp (\phi) ) \leq \delta m(X^{(0)}),$$
and equivalently that 
$$ m(X^{(0)} \setminus supp (\phi) ) \geq (1-\delta) m(X^{(0)}).$$
By proposition \ref{Cheeger for weighted graphs proposition}, we have that 
$$m (supp (\phi), X^{(0)} \setminus supp (\phi) ) \geq \lambda_0 \dfrac{m(supp (\phi)) m(X^{(0)} \setminus supp (\phi) )}{m(X^{(0)})} \geq \lambda_0 (1-\delta) \Vert \phi \Vert ,$$
and we are done.
\item  For $k>0$, let $\phi \in C^{k} (X,\mathbb{F}_2)$ be $\varepsilon$-locally minimal and $(r,\delta)$-thin. Note that for every $\tau \in X^{(k-1)}$, $X_\tau$ is a weighted graph with
$$\lambda (X_\tau) \geq \lambda_k.$$
By the assumption that $\phi$ is $\varepsilon$-locally minimal, we have for every $\tau \in X^{(k-1)}$ that $\phi_\tau$ is $\frac{1+\varepsilon}{2}$-thin (by remark \ref{epsilon locally minimal implies epsilon very locally minimal}). Therefore by result above for the case $k=0$, we have for every $\tau \in X^{(k-1)}$ that 
$$\Vert d_\tau \phi_\tau \Vert \geq \lambda_k \dfrac{1-\varepsilon}{2} \Vert \phi_\tau \Vert  .$$
For $\tau \in A_\delta$, we have by the result for $k=0$ that 
$$\Vert d_\tau \phi_\tau \Vert \geq \lambda_k (1-\delta) \Vert \phi_\tau \Vert  .$$
Therefore
\begin{dmath*}
\sum_{\tau \in X^{(k-1)}} \Vert d_\tau \phi_\tau \Vert = \sum_{\tau \in A_\delta} \Vert d_\tau \phi_\tau \Vert + \sum_{\tau \in X^{(k-1)} \setminus A_\delta} \Vert d_\tau \phi_\tau \Vert \geq \\
  \lambda_k \left( (1-\delta) \sum_{\tau \in A_\delta} \Vert \phi_\tau \Vert +  \dfrac{1-\varepsilon}{2} \sum_{\tau \in X^{(k-1)} \setminus A_\delta}  \Vert \phi_\tau \Vert \right) = \\
 \lambda_k \left( \dfrac{1}{2}  \sum_{\tau \in X^{(k-1)}} \Vert \phi_\tau \Vert  +  (\dfrac{1}{2}-\delta)  \sum_{\tau \in A_\delta} \Vert \phi_\tau \Vert  - \dfrac{\varepsilon}{2}  \sum_{\tau \in X^{(k-1)} \setminus A_\delta} \Vert \phi_\tau \Vert \right) = \\
  \lambda_k  \left( \dfrac{k+1}{2}  \Vert \phi \Vert +  (\dfrac{1}{2}-\delta) \sum_{\tau \in A_\delta}  \Vert \phi_\tau \Vert  - \dfrac{\varepsilon}{2}  \sum_{\tau \in X^{(k-1)} \setminus A_\delta} \Vert \phi_\tau \Vert \right).
\end{dmath*}
Next, by the assumption that $\phi$ is $(r,\delta)$-thin we get by the above proposition that
\begin{dmath*}
{(\dfrac{1}{2}-\delta) \sum_{\tau \in A_\delta}   \Vert \phi_\tau \Vert  - \dfrac{\varepsilon}{2}  \sum_{\tau \in X^{(k-1)} \setminus A_\delta} \Vert \phi_\tau \Vert \geq} \\ \left(  r (k+1) (\dfrac{1}{2}-\delta) -  (1-r)(k+1) \dfrac{\varepsilon}{2} \right) \Vert \phi \Vert.
\end{dmath*}
Therefore we showed that 
\begin{dmath*}
\sum_{\tau \in X^{(k-1)}} \Vert d_\tau \phi_\tau \Vert \geq \left(  \dfrac{1}{2} + r (\dfrac{1}{2}-\delta) -  (1-r) \dfrac{\varepsilon}{2} \right) \lambda_k (k+1) \Vert \phi \Vert = \\
 \left(   \dfrac{r+1}{2}   -r \delta  - (1-r) \dfrac{\varepsilon}{2} \right) \lambda_k (k+1) \Vert \phi \Vert .
\end{dmath*}

By lemma \ref{F_2 differential norm and (k-1)-localization proposition}, we have that
$$\Vert d \phi \Vert  \geq \sum_{\tau \in X^{(k-1)}} \Vert d_\tau \phi_\tau \Vert  - k \Vert \phi \Vert.$$
Therefore
$$\Vert d \phi \Vert  \geq   \left(  \left(   \dfrac{r+1}{2}   -r \delta  - (1-r) \dfrac{\varepsilon}{2} \right)  \lambda_k (k+1)  - k \right) \Vert \phi \Vert  .$$
From the fact that $0 < r \leq 1$, this yields that 
$$\Vert d \phi \Vert  \geq   \left(  \left(   \dfrac{r+1}{2}   - \delta  - \dfrac{\varepsilon}{2} \right)  \lambda_k (k+1)  - k \right) \Vert \phi \Vert  .$$
\end{enumerate}
\end{proof}

A simple corollary of the above lemma is the following isoperimetric inequality for the case $k=0$:
\begin{corollary}
\label{k=0 isoperimetric inequality}
For $X$ that is a pure $n$-dimensional weighted simplicial complex with $n \geq 1$, denote 
$$\lambda_0 = \lambda (X).$$
For every $X$ as above, if $\lambda_0 >0$ then for every $1> \varepsilon >0$, we have for every $0 \neq \phi \in C^0 (X, \mathbb{F}_2)$ that
$$\left( \phi \text{ is } \varepsilon \text{-locally minimal} \right) \Rightarrow \Vert d \phi \Vert \geq \lambda_0 \dfrac{1-\varepsilon}{2} \Vert \phi \Vert >0.$$ 
\end{corollary}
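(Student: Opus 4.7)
The plan is to reduce the corollary directly to the first part of the preceding lemma (Lemma \ref{thin has laplacian inequality}). First I would unpack the definition of $\varepsilon$-local minimality in the degree $k=0$ case: for $\phi\in C^0(X,\mathbb{F}_2)$, being $\varepsilon$-locally minimal is by definition the inequality
$$\Vert \phi \Vert \leq \dfrac{(1+\varepsilon)\, m(X^{(0)})}{2}.$$
If I set $\delta = \tfrac{1+\varepsilon}{2}$, this is exactly the assertion that $\phi$ is $\delta$-thin in the sense of the preceding definition.

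Next I would apply part (1) of Lemma \ref{thin has laplacian inequality}, which, on input that $\phi$ is $\delta$-thin, yields
$$\Vert d \phi \Vert \geq \lambda_0 (1-\delta)\Vert \phi \Vert = \lambda_0 \,\dfrac{1-\varepsilon}{2}\,\Vert \phi \Vert.$$
A small caveat is that Lemma \ref{thin has laplacian inequality} is stated with the hypothesis $0<\delta<\tfrac{1}{2}$, whereas here $\delta = \tfrac{1+\varepsilon}{2} \in (\tfrac{1}{2},1)$. However, inspecting the proof of part (1) shows that the only ingredient used is the Cheeger-type inequality from Proposition \ref{Cheeger for weighted graphs proposition}, which goes through verbatim for any $\delta<1$; I would remark on this (or equivalently re-run the one-line Cheeger argument in this range).

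Finally, for the strict positivity, I note that $\phi\neq 0$ forces $\Vert \phi \Vert >0$ (since $m$ is strictly positive on simplices), that $\lambda_0>0$ by hypothesis, and that $\varepsilon<1$ makes the factor $\tfrac{1-\varepsilon}{2}$ strictly positive; together these give $\Vert d\phi\Vert \geq \lambda_0\tfrac{1-\varepsilon}{2}\Vert\phi\Vert > 0$. There is no real obstacle here — the corollary is essentially a direct specialization of Lemma \ref{thin has laplacian inequality}(1), and the only content is matching the condition $\varepsilon$-locally minimal with the condition $\tfrac{1+\varepsilon}{2}$-thin in degree zero.
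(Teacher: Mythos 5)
Your proof is correct and follows essentially the same route as the paper: recognize that $\varepsilon$-local minimality in degree $0$ is exactly $\tfrac{1+\varepsilon}{2}$-thinness, then invoke part (1) of Lemma \ref{thin has laplacian inequality}. You also rightly flag that the lemma as stated imposes $\delta<\tfrac12$ while the application needs $\delta=\tfrac{1+\varepsilon}{2}\in(\tfrac12,1)$ — the paper applies the lemma without comment, but as you observe, the proof of part (1) only uses the Cheeger bound from Proposition \ref{Cheeger for weighted graphs proposition} and therefore works for any $\delta<1$, so the caveat is benign; your noting it is a small improvement on the paper's write-up (which also has a typo, writing $\lambda_0(1-\tfrac{\varepsilon}{2})$ in the displayed bound where $\lambda_0\tfrac{1-\varepsilon}{2}$ is meant).
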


\begin{proof}
Fix $1> \varepsilon >0$. By definition we have that for every $\phi \in C^0 (X,\mathbb{F}_2)$, if $\phi$ is $\varepsilon$-locally minimal, then
$$\Vert \phi \Vert \leq \dfrac{1+\varepsilon}{2} m(X^{(0)}).$$
By the above lemma, we have that  
$$\Vert d \phi \Vert \geq \lambda_0 (1-\frac{\varepsilon}{2}) \Vert \phi \Vert >0,$$
and we are done.
\end{proof}

Next, we shall prove the following:
\begin{lemma}
\label{small implies thin lemma}
For $X$  a pure $n$ dimensional weighted simplicial complex of dimension $n >1$ denote 
$$\lambda_0 = \lambda (X).$$
For every $\delta >0, \varepsilon_1 >0$, there are constants $0< C_1' = C_1' (\delta, \varepsilon_1) <1, \theta_1' = \theta_1' (\delta, \varepsilon_1 ) <1$, such that if $X$ is as above with
$$\lambda_0 \geq \theta_1',$$
then for every $\phi \in C^1 (X,\mathbb{F}_2)$, if $\Vert \phi \Vert \leq C_1' m(X^{(1)})$ then:
\begin{enumerate}
\item 
$$2 \varepsilon_1 \Vert \phi \Vert \geq \sum_{\lbrace u, v\rbrace \in X^{(1)}, \lbrace u \rbrace \notin A_\delta, \lbrace v \rbrace \notin A_\delta} m(\lbrace u, v\rbrace) .$$
\item  $\phi$ is $(\frac{1}{2}-\varepsilon_1 , \delta)$-thin.
\end{enumerate}
\end{lemma}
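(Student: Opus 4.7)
The plan is to let $B = X^{(0)} \setminus A_\delta$ denote the set of $\delta$-thick vertices, so that the sum on the right of (1) is exactly the weight of edges with both endpoints in $B$. Two facts will give the lemma: first, $B$ has small vertex weight relative to $X^{(0)}$, by proposition \ref{F_2 norm from links proposition}; second, small sets in a spectral expander span few internal edges, by the weighted Cheeger-type inequality proposition \ref{Cheeger for weighted graphs proposition}(2). For the first fact, proposition \ref{F_2 norm from links proposition} with $j=0, k=1$ yields the identity $\sum_{\{v\} \in X^{(0)}} \Vert \phi_{\{v\}} \Vert = 2 \Vert \phi \Vert$, and since each $\{v\} \in B$ contributes strictly more than $\delta m(\{v\})$ to this sum, one concludes $m(B) < 2 \Vert \phi \Vert / \delta$. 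Combined with $\Vert \phi \Vert \leq C_1' m(X^{(1)}) = \frac{C_1'}{2} m(X^{(0)})$ from proposition \ref{weight of X^k compared to X^l}, this also yields $m(B)/m(X^{(0)}) < C_1'/\delta$.

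Applying proposition \ref{Cheeger for weighted graphs proposition}(2) to $B$ in the weighted $1$-skeleton of $X$ (with smallest positive Laplacian eigenvalue $\lambda_0$) gives
$$m(B,B) \leq \frac{m(B)}{2}\left(1 - \lambda_0 \frac{m(X^{(0)}) - m(B)}{m(X^{(0)})}\right) \leq \frac{(1-\lambda_0) m(B)}{2} + \frac{m(B)^2}{2 m(X^{(0)})}.$$
Substituting the bounds above,
$$m(B,B) < \frac{(1-\lambda_0) \Vert \phi \Vert}{\delta} + \frac{C_1' \Vert \phi \Vert}{\delta^2}.$$
Now choose $\theta_1' := \max(1/2,\, 1 - \varepsilon_1 \delta / 2)$ and $C_1' := \min(1/2,\, \varepsilon_1 \delta^2 / 2)$; both lie in $(0,1)$ and make the right-hand side at most $\varepsilon_1 \Vert \phi \Vert$, which implies (1) with a factor of two to spare.

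For part (2), a direct double-counting gives
$$\sum_{\{v\} \in B} \Vert \phi_{\{v\}} \Vert = \sum_{e \in \mathrm{supp}(\phi)} m(e)\, |e \cap B| \leq 2 \, m(B,B) + \Vert \phi \Vert,$$
since an edge contributes at most $2m(e)$ if both endpoints lie in $B$ (in which case it lies inside $B$) and at most $m(e)$ otherwise. Combined with the bound $m(B,B) < \varepsilon_1 \Vert \phi \Vert$ from the previous paragraph, this gives $\sum_{\{v\} \in B} \Vert \phi_{\{v\}} \Vert < (1 + 2\varepsilon_1)\Vert \phi \Vert$; subtracting from $\sum_{\{v\}} \Vert \phi_{\{v\}} \Vert = 2 \Vert \phi \Vert$ yields $\sum_{\tau \in A_\delta} \Vert \phi_\tau \Vert > (1 - 2\varepsilon_1)\Vert \phi \Vert = (\tfrac{1}{2} - \varepsilon_1)(k+1)\Vert \phi \Vert$ with $k+1 = 2$, which is precisely the $(\frac{1}{2} - \varepsilon_1, \delta)$-thinness of $\phi$. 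The conceptual content of the lemma lies entirely in the weighted Cheeger step; the only mild subtlety is to tune the constants a factor of two tighter than part (1) alone requires, so that the double-counting step closes cleanly for part (2).
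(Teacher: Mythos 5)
Your proof is correct and follows essentially the same route as the paper's: bound $m(B)$ via the localization identity $\sum_v \Vert\phi_{\{v\}}\Vert = 2\Vert\phi\Vert$, control $m(B,B)$ via the weighted Cheeger-type inequality (Proposition \ref{Cheeger for weighted graphs proposition}(2)), then double-count edge weights over $A_\delta$ versus $B$. The one cosmetic difference is that your bound $\sum_{\{v\}\in B}\Vert\phi_{\{v\}}\Vert \leq 2m(B,B)+\Vert\phi\Vert$ overcounts (the tight count is $m(B,B)+\Vert\phi\Vert$, since an edge with both endpoints in $B$ is already counted once in the $\Vert\phi\Vert$ term), which is what forces you to tune the constants a factor of two tighter than the paper does; the tight count would let you use the paper's constants $C_1'=\delta^2\varepsilon_1$, $\theta_1'=1-\delta\varepsilon_1$ directly.
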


\begin{proof}
Fix $\delta>0, \varepsilon_1>0$.  Choose 
$$C_1' = \delta^2 \varepsilon_1 ,$$
$$\theta_1' = 1-\delta \varepsilon_1 .$$
Assume that $\lambda_0 \geq \theta_1'$ and let $\phi \in C^1 (X,\mathbb{F}_2)$ such that $\Vert \phi \Vert \leq C_1' m(X^{(1)})$. In the case $k=1$, $A_\delta \subset X^{(0)}$. We'll denote 
$$R^1 = A_\delta , S^1 = X^{(0)} \setminus S^1.$$
(these notations are in order to make the proof of this lemma similar to the proof of theorem \ref{k=2 alternative theorem} below). \\
Note that for every $\lbrace v \rbrace \in S^1$ we have that 
$$\Vert \phi_{\lbrace v \rbrace} \Vert \geq \delta m(v) .$$
By proposition \ref{F_2 norm from links proposition}, we have that
$$2 \Vert \phi \Vert \geq \sum_{\lbrace v \rbrace \in S^1} \Vert \phi_{\lbrace v \rbrace} \Vert  \geq \sum_{\lbrace v \rbrace \in S^1} \delta m(v) = \delta m(S^1) .$$
Therefore 
\begin{equation}
\label{ineq1}
\dfrac{2}{\delta} \Vert \phi \Vert \geq m(S^1), 
\end{equation}
and 
$$2 C_1' m(X^{(1)}) \geq \delta m(S^1) .$$
By the choice of $C_1'$ and since $m(X^{(0)}) = 2 m(X^{(1)})$ we get that 
$$\delta \varepsilon_1 \geq \dfrac{m(S^1)}{m (X^{(0)})}.$$
Equivalently,
\begin{equation}
\label{ineq2} 
1 - \delta \varepsilon_1  \leq \dfrac{m(R^1)}{m (X^{(0)})}.
\end{equation}
Recall that by proposition \ref{Cheeger for weighted graphs proposition}, we have that 
$$\dfrac{m(S^1)}{2} \left( 1 - \lambda_0 \dfrac{m(R^1)}{m (X^{(0)})} \right) \geq m(S^1, S^1) .$$
Combining the above inequality with \eqref{ineq1}, \eqref{ineq2} and the choice of $\theta_1'$ we get that
$$\dfrac{1}{\delta} \Vert \phi \Vert  \left( 1 - (1-\delta \varepsilon_1)^2 \right) \geq m(S^1, S^1) .$$
This yields that 
\begin{equation}
\label{ineq3}
2 \varepsilon_1 \Vert \phi \Vert  \geq m(S^1, S^1) .
\end{equation}
Next, for $i=0,1,2$ denote the following sets 
$$K_i^1 = \lbrace \sigma \in X^{(1)} : \vert \lbrace \lbrace v \rbrace \in S^1 : \lbrace v \rbrace \subset \sigma \rbrace  \vert =i \rbrace .$$
Note that $X^{(1)} = K_0^1 \cup K_1^1 \cup K_2^1$ and all the above sets are disjoint. With this notation we reinterpret \eqref{ineq3} as
$$2 \varepsilon_1 \Vert \phi \Vert \geq m(K_2^1) \geq m(K_2^1 \cap supp (\phi)) .$$
This proves the first assertion in the lemma. The above inequality yields that
$$  (1- 2\varepsilon_1)  \Vert \phi \Vert \leq m(K_0^1 \cap supp (\phi)) + m(K_1^1 \cap supp (\phi)) .$$
Also note that 
$$\sum_{\lbrace v \rbrace \in R^1} \Vert \phi_{\lbrace v \rbrace} \Vert = 2 m(K_0^1 \cap supp (\phi)) + m(K_1^1 \cap supp (\phi)) .$$
Therefore 
$$ (1- 2 \varepsilon_1)  \Vert \phi \Vert \leq 2 \sum_{\lbrace v \rbrace \in R^1} \Vert \phi_{\lbrace v \rbrace} \Vert ,$$
which yields 
$$2  (\dfrac{1}{2}- \varepsilon_1)  \Vert \phi \Vert \leq \sum_{\lbrace v \rbrace \in R^1} \Vert \phi_{\lbrace v \rbrace} \Vert,$$
as needed.
\end{proof}

Combining the two lemmas above we get the following isoperimetric inequality :
\begin{theorem}
\label{k=1 isoperimetric inequality theorem}
For $X$ that is a pure $n$-dimensional weighted simplicial complex with $n>1$, denote 
$$\lambda_0 = \lambda (X),$$
$$\lambda_{1} = \min_{\lbrace v \rbrace \in X^{(0)}} \lambda (X_{\lbrace v \rbrace}).$$
There are constants $\varepsilon  >0,  \theta_1 = \theta_1 <1, C_1 = C_1 > 0$ such that for every $X$ as above we have that if $\min \lbrace \lambda_0, \lambda_1 \rbrace \geq \theta_1$, then for every $0 \neq \phi \in C^1 (X,\mathbb{F}_2)$
$$\left( \phi \text{ is } \varepsilon \text{-locally minimal and } \Vert \phi \Vert \leq C_1 m(X^{(1)}) \right) \Rightarrow \Vert d \phi \Vert \geq \dfrac{1}{4} \Vert \phi \Vert >0.$$
\end{theorem}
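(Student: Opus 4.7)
The plan is to combine the two preceding lemmas in a straightforward way: use Lemma \ref{small implies thin lemma} to extract thinness from the smallness assumption $\|\phi\|\leq C_1 m(X^{(1)})$, and then feed this thinness into Lemma \ref{thin has laplacian inequality} to produce the Laplacian-type isoperimetric bound. The $\varepsilon$-local minimality assumption is exactly what is needed on the input side of Lemma \ref{thin has laplacian inequality}.

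Concretely, I would fix small positive auxiliary constants $\delta, \varepsilon_1, \varepsilon$ and set $C_1 := C_1'(\delta,\varepsilon_1)$ and $\theta_1 := \max\{\theta_1'(\delta,\varepsilon_1),\theta_1''\}$, where $\theta_1''<1$ will be chosen just below. Given $\phi$ satisfying the hypotheses, Lemma \ref{small implies thin lemma}(2) (applicable since $\lambda_0\geq\theta_1\geq\theta_1'$ and $\|\phi\|\leq C_1 m(X^{(1)})$) yields that $\phi$ is $(\tfrac{1}{2}-\varepsilon_1,\delta)$-thin. Then by Lemma \ref{thin has laplacian inequality}(2) applied with $k=1$ and $r=\tfrac{1}{2}-\varepsilon_1$,
\begin{equation*}
\Vert d\phi\Vert \;\geq\; \Bigl(\bigl(\tfrac{r+1}{2}-\delta-\tfrac{\varepsilon}{2}\bigr)\cdot 2\lambda_1 - 1\Bigr)\Vert\phi\Vert \;=\; \Bigl(\bigl(\tfrac{3}{2}-\varepsilon_1-2\delta-\varepsilon\bigr)\lambda_1 - 1\Bigr)\Vert\phi\Vert.
\end{equation*}

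To close the argument I would then choose $\varepsilon_1,\delta,\varepsilon$ small enough and $\theta_1''<1$ close enough to $1$ so that
\begin{equation*}
\bigl(\tfrac{3}{2}-\varepsilon_1-2\delta-\varepsilon\bigr)\lambda_1 - 1 \;\geq\; \tfrac{1}{4}
\end{equation*}
whenever $\lambda_1\geq \theta_1''$. Since for $\varepsilon_1=\delta=\varepsilon=0$ and $\lambda_1=1$ the left-hand side equals $\tfrac{1}{2}$, there is genuine slack and such a choice is possible; e.g., one may take $\varepsilon_1,\delta,\varepsilon$ small enough that $\tfrac{3}{2}-\varepsilon_1-2\delta-\varepsilon\geq \tfrac{7}{5}$, and then $\theta_1''=\tfrac{25}{28}$ works.

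There is no real obstacle: all the hard work (the Cheeger-type estimate on the $0$-localizations, the passage from $\|d\phi\|$ to $\sum_\tau \|d_\tau\phi_\tau\|$, and the combinatorial argument showing that "small norm forces most vertices to be thin") has already been done in the two lemmas. The only mildly delicate point is bookkeeping: one must verify that $\theta_1'(\delta,\varepsilon_1)$ can simultaneously be taken $<1$ (which was proven in Lemma \ref{small implies thin lemma} with explicit formulas $C_1'=\delta^2\varepsilon_1$ and $\theta_1'=1-\delta\varepsilon_1$) and that the resulting $\theta_1$ lies strictly below $1$. Picking the constants in the order $\varepsilon\to\varepsilon_1\to\delta\to\theta_1$ keeps the dependencies clean.
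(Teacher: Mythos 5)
Your proposal is correct and follows the paper's proof of Theorem \ref{k=1 isoperimetric inequality theorem} essentially verbatim: apply Lemma \ref{small implies thin lemma} to obtain $(\tfrac{1}{2}-\varepsilon_1,\delta)$-thinness, feed this into Lemma \ref{thin has laplacian inequality} with $k=1$ and $r=\tfrac{1}{2}-\varepsilon_1$, and then pick $\delta,\varepsilon_1,\varepsilon$ small and $\theta_1<1$ close enough to $1$ to close the arithmetic. If anything your bookkeeping is slightly more careful than the paper's, which in the final simplification effectively moves the additive $-k=-1$ inside the factor multiplied by $\lambda_1$ and so ends up with $\theta_1=\max\{\theta_1',\tfrac{8}{9}\}$, a value that does not quite suffice with those constants; your version keeps the $-1$ outside and the choice $\tfrac{3}{2}-\varepsilon_1-2\delta-\varepsilon\geq\tfrac{7}{5}$, $\theta_1''=\tfrac{25}{28}$ genuinely gives $\tfrac{7}{5}\cdot\tfrac{25}{28}-1=\tfrac{1}{4}$.
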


\begin{proof}
Take 
$$\delta = \varepsilon = \dfrac{1}{16}, \varepsilon_1 = \dfrac{1}{32}.$$
With the above choice let $\theta_1' = \theta_1' (\delta, \varepsilon_1), C_1' = C_1' (\delta, \varepsilon_1)$ as in lemma \ref{small implies thin lemma}. Next take $C_1 = C_1' (\delta, \varepsilon_1)$, 
$$\theta_1 = \max \left\lbrace \theta_1', \dfrac{\frac{1}{4}}{\frac{1}{2} - \frac{7}{32}} \right\rbrace .$$
Let $X$ with $\min \lbrace \lambda_0, \lambda_1 \rbrace \geq \theta_1$ and let $\phi \in C^1 (X,\mathbb{F}_2)$ such that $\phi$ is $\varepsilon$-locally minimal and $\Vert \phi \Vert \leq C_1 m(X^{(1)})$. By lemma \ref{small implies thin lemma}, we have that $\phi$ is $(\frac{1}{2}-\frac{1}{32},\delta)$-thin. By lemma \ref{thin has laplacian inequality} for $k=1$, we get that 
$$\Vert d \phi \Vert  \geq   \left(   \lambda_1 (1+\frac{1}{2}  -\dfrac{1}{32} - 2\delta - \varepsilon) -1  \right) \Vert \phi \Vert  .$$
By the choice of $\delta, \varepsilon, \varepsilon_1$ this yields
$$\Vert d \phi \Vert  \geq   \left(   \lambda_1 (1+\frac{1}{2} -\frac{1}{32}-  \dfrac{1}{8}- \dfrac{1}{16} -1  \right) \Vert \phi \Vert  .$$
After simplifying, we get that 
$$\Vert d \phi \Vert  \geq   \left(   \lambda_1 (\dfrac{1}{2}- \dfrac{7}{32})  \right) \Vert \phi \Vert  .$$
To finish, recall that by the choice of $\theta_1$, we have that $\lambda_1 \geq \frac{ \frac{1}{4}}{\frac{1}{2} - \frac{7}{32}}$ and therefore
$$\Vert d \phi \Vert \geq \dfrac{1}{4} \Vert \phi \Vert .$$
\end{proof}

\begin{remark}
Our choice of $\frac{1}{4}$ in the above theorem is arbitrary: for every $\varepsilon_1'>0$, one can find $C_1,\theta_1, \varepsilon$, such that the formulation of the above theorem reads
$$\left( \phi \text{ is } \varepsilon \text{-locally minimal and } \Vert \phi \Vert \leq C_1 m(X^{(1)}) \right) \Rightarrow \Vert d \phi \Vert \geq \left(\dfrac{1}{2} - \varepsilon_1' \right) \Vert \phi \Vert.$$
\end{remark}

\begin{remark}
The above theorem provides an isoperimetric inequality for the case $k=1$ (i.e., for $\phi \in C^1 (X,\mathbb{F}_2)$ under certain conditions). Note that this result does not depend on anything other than the spectral properties of the simplicial complex and can be deduced from large enough local spectral expansion. At this point, we do not know how to prove isoperimetric inequality in the $k=2$ cases strictly from spectral gap considerations (it is our hope to do so in the future). Below, we will show the isoperimetric inequality for the case $k=2$ under further assumptions. 
\end{remark}

\begin{theorem}
\label{k=2 alternative theorem}
For $X$ that is a pure $n$-dimensional weighted simplicial complex with $n>2$, denote 
$$\lambda_0 = \lambda (X),$$
$$\lambda_{1} = \min_{\lbrace v \rbrace \in X^{(0)}} \lambda (X_{\lbrace v \rbrace}),$$
$$\lambda_{2} = \min_{\tau \in X^{(1)}} \lambda (X_\tau).$$
For every $1 \geq \varepsilon_2 >0, \delta >0$ there are constants $ \theta_2' = \theta_2' (\varepsilon_2, \delta)<1, C_1' = C_1' (\varepsilon_2, \delta)> 0, C_2' = C_2' (\varepsilon_2, \delta)> 0$ such that for every $X$ as above and $\phi \in C^2 (X,\mathbb{F}_2)$ we have that if:
\begin{enumerate}
\item  $\min \lbrace \lambda_0, \lambda_1, \lambda_2 \rbrace \geq \theta_2'$.
\item $\Vert \phi \Vert \leq C_2' m(X^{(2)})$.
\end{enumerate}
Then one of the following holds:
\begin{enumerate}
\item There is a set $S^2 \subset X^{(0)}$ such that 
$$\forall \lbrace v \rbrace \in S^2, \Vert \phi_{\lbrace v \rbrace} \Vert \geq C_1' m(X_{\lbrace v \rbrace}^{(1)}) ,$$
$$ \sum_{\lbrace v \rbrace \in S^2} \Vert \phi_{\lbrace v \rbrace} \Vert \geq \dfrac{9}{20} \Vert \phi \Vert   ,$$
and
$$\Vert d \phi \Vert \geq \sum_{\lbrace v \rbrace \in S^2} \Vert d_{\lbrace v \rbrace} \phi_{\lbrace v \rbrace} \Vert -\dfrac{ 11 \varepsilon_2}{9} \sum_{\lbrace v \rbrace \in S^2} \Vert \phi_{\lbrace v \rbrace} \Vert.$$
\item $\phi$ is $(\frac{1}{3} + \frac{\varepsilon_2}{15}, \delta)$-thin.
\end{enumerate}
\end{theorem}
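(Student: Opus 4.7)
The plan is to execute a dichotomy on the vertex-localizations $\phi_{\{v\}}$, using Lemma \ref{small implies thin lemma} as the workhorse applied inside each vertex link $X_{\{v\}}$; this lifts the ``thick vs.\ thin vertex'' split of that lemma one dimension higher, to ``thick vs.\ thin link''.

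\textbf{Setup of constants.} Given $\delta$ and $\varepsilon_2$, I would first pick an auxiliary parameter $\varepsilon_1 = \varepsilon_1(\varepsilon_2) > 0$ small enough that $\tfrac{51(1-2\varepsilon_1)}{40} \geq 1 + \tfrac{\varepsilon_2}{5}$ (for $\varepsilon_2 \leq 1$ any $\varepsilon_1 \leq 1/34$ works). Feeding $(\delta,\varepsilon_1)$ into Lemma \ref{small implies thin lemma} produces constants $\widetilde{C}_1, \widetilde{\theta}_1 < 1$, and I set $C_1' := \widetilde{C}_1$. Then I pick $\theta_2' < 1$ close enough to $1$ so that Theorem \ref{decent in links theorem} forces $\lambda(X_{\{v\}}) \geq \widetilde{\theta}_1$ for every vertex $v$; finally $C_2'$ is taken small enough (depending on $C_1'$ and $\varepsilon_2$) to accommodate the estimates below.

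\textbf{Dichotomy and Case 2.} Partition $X^{(0)} = S^2 \cup R^2$ with $S^2 := \{\{v\} \in X^{(0)} : \|\phi_{\{v\}}\| \geq C_1' m(X_{\{v\}}^{(1)})\}$. By Proposition \ref{F_2 norm from links proposition} applied with $k=2$, $j=0$, one has $\sum_v \|\phi_{\{v\}}\| = 3\|\phi\|$, so either $\sum_{S^2}\|\phi_{\{v\}}\| \geq \tfrac{9}{20}\|\phi\|$ (targeting alternative~1) or $\sum_{R^2}\|\phi_{\{v\}}\| > \tfrac{51}{20}\|\phi\|$ (targeting alternative~2). In the latter case, for each $\{v\} \in R^2$ the definition of $R^2$ supplies exactly the norm hypothesis of Lemma \ref{small implies thin lemma} in the $(n-1)$-dimensional complex $X_{\{v\}}$ applied to $\phi_{\{v\}} \in C^1(X_{\{v\}},\mathbb{F}_2)$; its part~(2) gives $\sum_{\{w\} \in A_\delta^{\{v\}}}\|\phi_{\{v,w\}}\| \geq (1-2\varepsilon_1)\|\phi_{\{v\}}\|$, where $\{w\} \in A_\delta^{\{v\}}$ precisely when the edge $\{v,w\}$ is $\delta$-thin for $\phi$ in $X$. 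Summing over $v \in R^2$ and noting that each such $\delta$-thin edge is counted at most twice yields $2\sum_{\tau \in A_\delta}\|\phi_\tau\| \geq (1-2\varepsilon_1)\tfrac{51}{20}\|\phi\| \geq (2+\tfrac{2\varepsilon_2}{5})\|\phi\|$; dividing by $2$ recovers exactly the $(\tfrac13+\tfrac{\varepsilon_2}{15},\delta)$-thinness demanded in alternative~2.

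\textbf{Case 1 (the hard part).} When $\sum_{S^2}\|\phi_{\{v\}}\| \geq \tfrac{9}{20}\|\phi\|$, the task is to establish $\|d\phi\| \geq \sum_{S^2}\|d_{\{v\}}\phi_{\{v\}}\| - \tfrac{11\varepsilon_2}{9}\sum_{S^2}\|\phi_{\{v\}}\|$. The natural starting point is Proposition \ref{F_2 differential norm and 0-localization proposition} for $k=2$, giving $\sum_{v}\|d_{\{v\}}\phi_{\{v\}}\| \leq \|\phi\| + 2\|d\phi\|$; but simply restricting the left side to $S^2$ loses a factor of $2$ that the conclusion cannot afford. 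The hard part will be to recover that factor by exploiting, for each $v \in R^2$, part~(1) of Lemma \ref{small implies thin lemma}, which bounds by $2\varepsilon_1\|\phi_{\{v\}}\|$ the $m$-mass of triangles $\{v,u,w\} \in X^{(2)}$ whose remaining two edges $\{v,u\},\{v,w\}$ are both $\delta$-thick for $\phi$. Concretely, I would classify each $3$-simplex $\eta \in X^{(3)}$ by the combined data of (i) how many of its four $2$-faces lie in $\mathrm{supp}(\phi)$ (the sets $T_0,\dots,T_4$ used in the proof of Proposition \ref{F_2 differential norm and 0-localization proposition}) and (ii) the distribution of its vertices between $S^2$ and $R^2$, and then show by careful bookkeeping that the excess in $\sum_{S^2}\|d_{\{v\}}\phi_{\{v\}}\|$ beyond $\|d\phi\|$ is absorbed by the $R^2$-sum of part~(1) bounds, producing the prescribed $\tfrac{11\varepsilon_2}{9}$ error coefficient.
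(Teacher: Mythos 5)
Your Case 2 derivation is a valid and in fact cleaner argument than the paper's Inequality 2: instead of the $L_i$-classification of triangles by their number of $\delta$-thick edges, you directly double-count the $(\tfrac12-\varepsilon_1,\delta)$-thinness of each $\phi_{\{v\}}$ over $v\in R^2$, and the arithmetic checks out. However, the overall dichotomy you chose is not the one the paper uses, and this creates a genuine gap in Case 1 that your plan cannot close.

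The paper does not branch on the size of $\sum_{\{v\}\in S^2}\Vert\phi_{\{v\}}\Vert$. It proves both Inequality~1 and Inequality~2 unconditionally, with error coefficients depending on the ratio $\alpha = m(K_0^2\cap\mathrm{supp}\,\phi)/m(K_1^2\cap\mathrm{supp}\,\phi)$ (where $K_i^2$ is the set of $2$-simplices with exactly $i$ vertices in $S^2$), and only then splits on whether $\alpha\leq\varepsilon_2$ or $\alpha\geq\varepsilon_2$. The dependence on $\alpha$ in Inequality~1 is not an artefact: for $\eta\in K_1^3$ with $\{v\}=\eta\cap S^2$, one has $d\phi(\eta) = d_{\{v\}}\phi_{\{v\}}(\eta\setminus\{v\}) + \phi(\eta\setminus\{v\})$, and $\eta\setminus\{v\}\in K_0^2$, so the total discrepancy between $\Vert d\phi\Vert$ and $\sum_{S^2}\Vert d_{\{v\}}\phi_{\{v\}}\Vert$ on these simplices is controlled exactly by $m(K_0^2\cap\mathrm{supp}\,\phi)=\alpha\, m(K_1^2\cap\mathrm{supp}\,\phi)$. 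Without $\alpha\leq\varepsilon_2$, this term can be a constant fraction of $\Vert\phi\Vert$ and overwhelm the target coefficient $\tfrac{11\varepsilon_2}{9}$ when $\varepsilon_2$ is small.

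Your Case~1 hypothesis $\sum_{S^2}\Vert\phi_{\{v\}}\Vert\geq\tfrac{9}{20}\Vert\phi\Vert$ is strictly weaker than $\alpha\leq\varepsilon_2$: it forces $m(K_1^2\cap\mathrm{supp}\,\phi)$ to be a constant fraction of $\Vert\phi\Vert$, but places no upper bound on $m(K_0^2\cap\mathrm{supp}\,\phi)$ beyond $\Vert\phi\Vert$, hence $\alpha$ can be of order $1$ even when $\varepsilon_2$ is small. For such $\phi$ the theorem is true, but via alternative~2, not alternative~1 --- and your plan is aimed exclusively at alternative~1. Moreover, the tools you propose for Case~1 do not touch the right quantity: Proposition~\ref{F_2 differential norm and 0-localization proposition} sums localized differentials over \emph{all} vertices (so restricting to $S^2$ discards exactly the contribution you need to control), and Lemma~\ref{small implies thin lemma}(1) applied at $R^2$ vertices bounds the mass of triangles whose two $R^2$-incident edges are both $\delta$-thick --- a quantity unrelated to $m(K_0^2\cap\mathrm{supp}\,\phi)$, and scaled by $\varepsilon_1$ against $\sum_{R^2}\Vert\phi_{\{v\}}\Vert$, which in Case~1 is not small. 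The fix is to follow the paper: compute both inequalities with their $\alpha$-dependent constants, then dichotomize on $\alpha$ rather than on $\sum_{S^2}\Vert\phi_{\{v\}}\Vert$.
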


\begin{proof}
Fix $1 \geq \varepsilon_2 >0 , \delta >0$. Denote $\varepsilon_1 = \frac{\varepsilon_2}{60}$ and let $C_1' = C_1' (\varepsilon_1 ,\delta)$, $\theta_1' = \theta_1' (\varepsilon_1 , \delta)$ be the constants from lemma \ref{small implies thin lemma} above. Choose
$$\theta_2' = \max \left\lbrace \theta_1', 1- \dfrac{C_1' \varepsilon_2}{60} \right\rbrace,$$
$$C_2' = \dfrac{(C_1')^2 \varepsilon_2}{60} ,$$
Let $X$ as above with $\min \lbrace \lambda_0, \lambda_1, \lambda_2 \rbrace \geq \theta_2'$. Let $\phi \in C^2 (X, \mathbb{F}_2)$ such that $\Vert \phi \Vert \leq C_2' m(X^{(2)})$. Partition the vertices of $X^{(0)}$ as follows:
$$R^2 = \lbrace \lbrace v \rbrace \in X^{(0)} : \Vert \phi_{\lbrace v \rbrace} \Vert \leq C_1' m(X_{\lbrace v \rbrace}^{(1)} ) \rbrace,$$
$$S^2 = X^{(0)} \setminus R^2 .$$
Note that for every $v \in X^{(0)}$, we have that 
$$ \lambda_0 (X_{\lbrace v \rbrace}) = \lambda (X_{\lbrace v \rbrace}) \geq \lambda_1 (X).$$
Also note that 
\begin{dmath*}
3 C_2' m(X^{(2)}) \geq 3 \Vert \phi \Vert \geq \sum_{\lbrace v \rbrace \in S^2} \Vert \phi_{\lbrace v \rbrace} \Vert \geq \sum_{\lbrace v \rbrace \in S^2} C_1' m(X_{\lbrace v \rbrace}^{(1)}) = \sum_{\lbrace v \rbrace \in S^2} C_1' \dfrac{1}{2} m(v) = \dfrac{1}{2} C_1' m(S^2).
\end{dmath*}
Therefore
$$\dfrac{6}{C_1'} \Vert \phi \Vert \geq m(S^2),$$
$$\dfrac{C_1' \varepsilon_2}{60} = \dfrac{C_2'}{C_1'} \geq \dfrac{m(S^2)}{6 m(X^{(2)})} = \dfrac{m(S)}{m(X^{(0)})}.$$
The last inequality yields that 
$$\dfrac{m(R^2)}{m(X^{(0)})} \geq 1 -\dfrac{C_1' \varepsilon_2}{60} .$$
As in the proof of lemma \ref{small implies thin lemma}, we imply proposition \ref{Cheeger for weighted graphs proposition} and get that 
$$\dfrac{m(S^2)}{2} (1- \dfrac{m(R^2)}{m(X^{(0)})}) \geq m(S^2,S^2).$$
Therefore by the inequalities above and the choice of $\theta_2'$, we get that
$$\dfrac{3}{C_1'} \Vert \phi \Vert (1- (1- \dfrac{C_1' \varepsilon_2}{60})^2) \geq m(S^2,S^2) .$$
Therefore
$$\dfrac{\varepsilon_2}{10} \Vert \phi \Vert  \geq m(S^2,S^2).$$
For $i=0,1,2,3$ denote 
$$K_i^2 = \lbrace \sigma \in X^{(2)} : \vert \lbrace \lbrace v \rbrace \in S^2 : \lbrace v \rbrace \subset \sigma  \rbrace \vert = i \rbrace.$$
$\lbrace K_i^2 \rbrace_{i=0}^3$ is a partition of $X^{(2)}$ and therefore
$$\Vert \phi \Vert = \sum_{i=0}^3 m(K_i^2 \cap supp (\phi)).$$ 
Note that 
\begin{dmath*}
m(K_2^2) + 3 m(K_3^2) = \sum_{\sigma \in K_2^2} m(\sigma) +  \sum_{\sigma \in K_3^2} 3 m(\sigma) = \sum_{\sigma \in K_2^2} \left( \sum_{\lbrace u,v \rbrace \in  X^{(1)}, \lbrace u,v \rbrace \subset \sigma, \lbrace u \rbrace \in S^2, \lbrace v \rbrace \in S^2} m(\sigma) \right) +  \sum_{\sigma \in K_3^2} \left( \sum_{\lbrace u,v \rbrace \in  X^{(1)}, \lbrace u,v \rbrace \subset \sigma, \lbrace u \rbrace \in S^2, \lbrace v \rbrace \in S^2} m(\sigma) \right)=    
\sum_{\lbrace u,v \rbrace \in  X^{(1)}, \lbrace u \rbrace \in S^2, \lbrace v \rbrace \in S^2} \left( \sum_{\sigma \in X^{(3)}, \lbrace u,v \rbrace \subset \sigma} m(\sigma) \right)= \sum_{\lbrace u,v \rbrace \in  X^{(1)}, \lbrace u \rbrace \in S^2, \lbrace v \rbrace \in S^2} m(\tau) = m(S^2,S^2) .
\end{dmath*}  
Therefore
\begin{equation}
\label{k=2 thm - ineq1}
\dfrac{\varepsilon_2}{10} \Vert \phi \Vert  \geq m(K_2^2) + 3m(K_3^2) \geq m(K_2^2) + m(K_3^2).
\end{equation}
This yields that 
\begin{equation}
\label{k=2 thm - ineq2}
(1-\dfrac{\varepsilon_2}{10})  \Vert \phi \Vert \leq m(K_0^2 \cap supp(\phi)) + m(K_1^2 \cap supp (\phi) ).
\end{equation}
Denote 
$$\alpha = \dfrac{m(K_0^2 \cap supp(\phi))}{m(K_1^2 \cap supp (\phi) )} ,$$
(we'll also deal with the case where $m(K_1^2 \cap supp (\phi) )=0$).  Next, we'll prove two inequalities dependant on the values of $\alpha$, which much the two options stated in the theorem. \\
\textbf{Inequality 1} :
Notice that 
\begin{dmath*} 
\sum_{\lbrace v \rbrace \in S^2} \Vert \phi_{\lbrace v \rbrace} \Vert = m(K_1^2 \cap supp (\phi)) + 2 m(K_2^2 \cap supp (\phi))+ 3 m(K_3^2 \cap supp (\phi)) \geq m(K_1^2 \cap supp (\phi)).
\end{dmath*}
Therefore, by \eqref{k=2 thm - ineq2}, we have that 
$$(1-\dfrac{ \varepsilon_2}{10})  \Vert \phi \Vert \leq (1+ \alpha) m(K_1^2 \cap supp(\phi)) \leq  (1+ \alpha) \sum_{\lbrace v \rbrace \in S^2} \Vert \phi_{\lbrace v \rbrace} \Vert .$$
Using the fact that $\varepsilon_2 \leq 1$, this yields 
\begin{equation}
\label{k=2 thm - ineq3}
 \Vert \phi \Vert \leq \dfrac{10 (1+ \alpha)}{9} \sum_{\lbrace v \rbrace \in S^2} \Vert \phi_{\lbrace v \rbrace} \Vert .
\end{equation}
Next, we'll analyse the connection between $d \phi $ and $d_{\lbrace v \rbrace} \phi_{\lbrace v \rbrace}$ for $\lbrace v \rbrace \in S^2$. For $i=0,1,2,3,4$, denote
$$K_i^3 = \lbrace \sigma \in X^{(3)} : \vert \lbrace \lbrace v \rbrace \in S^2 : \lbrace v \rbrace \subset \sigma  \rbrace \vert = i \rbrace.$$
As before, we have that 
\begin{dmath*}
m(K_2^3) + 3m(K_3^3) + 6m(K_4^3) =\sum_{\eta \in K_2^3} m(\eta) +  \sum_{\eta \in K_3^2} 3 m(\eta)  +  \sum_{\eta \in K_4^2} 3 m(\eta)= \sum_{\sigma \in K_3^2} \left( \sum_{\lbrace u,v \rbrace \in  X^{(1)}, \lbrace u,v \rbrace \subset \eta, \lbrace u \rbrace \in S^2, \lbrace v \rbrace \in S^2} m(\eta) \right) +  \sum_{\sigma \in K_3^2} \left( \sum_{\lbrace u,v \rbrace \in  X^{(1)}, \lbrace u,v \rbrace \subset \eta, \lbrace u \rbrace \in S^2, \lbrace v \rbrace \in S^2} m(\eta) \right) + \sum_{\sigma \in K_4^2} \left( \sum_{\lbrace u,v \rbrace \in  X^{(1)}, \lbrace u,v \rbrace \subset \eta, \lbrace u \rbrace \in S^2, \lbrace v \rbrace \in S^2} m(\eta) \right)=    
\sum_{\lbrace u,v \rbrace \in  X^{(1)}, \lbrace u \rbrace \in S^2, \lbrace v \rbrace \in S^2} \left( \sum_{\sigma \in X^{(4)}, \lbrace u,v \rbrace \subset \sigma} m(\eta) \right)= \dfrac{1}{2} \sum_{\lbrace u,v \rbrace \in  X^{(1)}, \lbrace u \rbrace \in S^2, \lbrace v \rbrace \in S^2} m(\tau) = \dfrac{1}{2} m(S^2,S^2) .
\end{dmath*}
This inequality allows us to bound the contribution of simplices in $K_2^3,K_3^3,K_4^3$ to the sum of the norms of the localizations:
\begin{dmath*}
\sum_{\lbrace v \rbrace \in S^2} \Vert d_{\lbrace v \rbrace} \phi_{\lbrace v \rbrace} \Vert = \sum_{\lbrace v \rbrace \in S^2} \left( \sum_{\tau \in X_{\lbrace v \rbrace}, \tau \in supp (d_{\lbrace v \rbrace} \phi_{{\lbrace v \rbrace}})}  m_{{\lbrace v \rbrace}} (\tau) \right) \leq \\
\sum_{\lbrace v \rbrace \in S^2} \left( \sum_{\tau \in X_{\lbrace v \rbrace}, \tau \in supp (d_{\lbrace v \rbrace} \phi_{{\lbrace v \rbrace}}), \lbrace v \rbrace \cup \tau \in K_1^3}  m_{{\lbrace v \rbrace}} (\tau) \right) + 2 m(K_2^3) + 3 m(K_3^3) + 4 m(K_4^3) \leq \\
\sum_{\lbrace v \rbrace \in S^2} \left( \sum_{\tau \in X_{\lbrace v \rbrace}, \tau \in supp (d_{\lbrace v \rbrace} \phi_{{\lbrace v \rbrace}}), \lbrace v \rbrace \cup \tau \in K_1^3}  m_{{\lbrace v \rbrace}} (\tau) \right) +m(S^2, S^2) \leq \\
\sum_{\lbrace v \rbrace \in S^2} \left( \sum_{\tau \in X_{\lbrace v \rbrace}, \tau \in supp (d_{\lbrace v \rbrace} \phi_{{\lbrace v \rbrace}}), \lbrace v \rbrace \cup \tau \in K_1^3}  m_{{\lbrace v \rbrace}} (\tau) \right) + \dfrac{\varepsilon_2}{10} \Vert \phi \Vert = \\
\sum_{\lbrace v \rbrace \in S^2} \left( \sum_{\eta \in K_1^3, \lbrace v \rbrace \subset \eta, (\eta \setminus \lbrace v \rbrace) \in supp (d_{\lbrace v \rbrace} \phi_{{\lbrace v \rbrace}})}  m (\eta) \right) + \dfrac{\varepsilon_2}{10} \Vert \phi \Vert.
\end{dmath*}
Therefore, the main contribution to $\sum_{\lbrace v \rbrace \in S^2} \Vert d_{\lbrace v \rbrace} \phi_{\lbrace v \rbrace} \Vert$ comes from simplices in $K_1^3$:
\begin{equation}
\label{k=2 thm ineq4}
\sum_{\lbrace v \rbrace \in S^2} \Vert d_{\lbrace v \rbrace} \phi_{\lbrace v \rbrace} \Vert - \dfrac{\varepsilon_2}{10} \Vert \phi \Vert \leq \sum_{\lbrace v \rbrace \in S^2} \left( \sum_{\eta \in K_1^3, \lbrace v \rbrace \subset \eta, (\eta \setminus \lbrace v \rbrace) \in supp (d_{\lbrace v \rbrace} \phi_{{\lbrace v \rbrace}})}  m (\eta) \right)  .
\end{equation}

Next, observe that for every $\eta \in X^{(3)}$ and for every $\lbrace v \rbrace  \subset \eta$  we have that
$$d \phi (\eta ) = d_{ \lbrace v \rbrace} \phi_{\lbrace v \rbrace} (\eta \setminus \lbrace v \rbrace )  + \phi (\eta \setminus \lbrace v \rbrace),$$
where the addition above is in $\mathbb{F}_2$. Note that for $\eta \in K_1^3$ and $\lbrace v \rbrace  \subset \eta, \lbrace v \rbrace \in S^2$ we have that $\eta \setminus \lbrace v \rbrace \in K_0^2$. Therefore 
\begin{dmath*}
\Vert d \phi \Vert - \sum_{\lbrace v \rbrace \in S^2} \left( \sum_{\eta \in K_1^3, \lbrace v \rbrace \subset \eta, (\eta \setminus \lbrace v \rbrace) \in supp (d_{\lbrace v \rbrace} \phi_{{\lbrace v \rbrace}})}  m (\eta) \right) \geq \\
{ \sum_{\eta \in K_1^3 \cap supp (d \phi)} m(\eta) -  \sum_{\lbrace v \rbrace \in S^2} \left( \sum_{\eta \in K_1^3, \lbrace v \rbrace \subset \eta, (\eta \setminus \lbrace v \rbrace) \in supp (d_{\lbrace v \rbrace} \phi_{{\lbrace v \rbrace}})}  m (\eta) \right)  =} \\
 \sum_{\eta \in K_1^3 \cap supp (d \phi)} m(\eta) -  \sum_{\eta \in K_1^3} \left( \sum_{\lbrace v \rbrace \in S^2, \lbrace v \rbrace \subset \eta, \eta \setminus \lbrace v \rbrace \in supp (d_{\lbrace v \rbrace} \phi_{{\lbrace v \rbrace}})} m(\eta) \right) 
\geq  - \sum_{\eta \in K_1^3, d \phi (\eta ) =0,  d_{ \lbrace v \rbrace} \phi_{\lbrace v \rbrace} (\eta \setminus \lbrace v \rbrace )=1 \text{ for }  \lbrace v \rbrace \in S^2,  \lbrace v \rbrace \subset \eta} m(\eta) \geq - m(K_0^2 \cap supp (\phi) ) \geq - \alpha m(K_1^2 \cap supp (\phi)).
\end{dmath*}
Combined with \eqref{k=2 thm ineq4}, this yields 
\begin{equation}
\label{k=2 thm - ineq 5}
 \Vert d \phi \Vert \geq \sum_{\lbrace v \rbrace \in S^2} \Vert d_{\lbrace v \rbrace} \phi_{\lbrace v \rbrace} \Vert - \dfrac{\varepsilon_2}{10} \Vert \phi \Vert -\alpha m(K_1^2 \cap supp (\phi)) .
\end{equation}
Note that 
$$\sum_{\lbrace v \rbrace \in S^2} \Vert \phi_{\lbrace v \rbrace} \Vert \geq m(K_1^2 \cap supp (\phi)).$$
Combine this with \eqref{k=2 thm - ineq 5} and \eqref{k=2 thm - ineq3} and get 
$$ \Vert d \phi \Vert \geq \sum_{\lbrace v \rbrace \in S^2} \Vert d_{\lbrace v \rbrace} \phi_{\lbrace v \rbrace} \Vert - \dfrac{ \varepsilon_2 (1+\alpha)}{9} \sum_{\lbrace v \rbrace \in S^2} \Vert \phi_{\lbrace v \rbrace} \Vert -\alpha \sum_{\lbrace v \rbrace \in S^2} \Vert \phi_{\lbrace v \rbrace} \Vert .$$
Therefore 
$$\Vert d \phi \Vert \geq \sum_{\lbrace v \rbrace \in S^2} \Vert d_{\lbrace v \rbrace} \phi_{\lbrace v \rbrace} \Vert - (\dfrac{ \varepsilon_2 (1+\alpha)}{9} + \alpha) \sum_{\lbrace v \rbrace \in S^2} \Vert \phi_{\lbrace v \rbrace} \Vert.$$
\textbf{Inequality 2}:  For $i=0,1,2,3$ denote 
$$L_i = \lbrace \sigma \in X^{(2)} : \sigma \text{ has } i \text{ } \delta-\text{thick edges} \rbrace.$$
Observe that if for $\lbrace u,v,w \rbrace \in X^{(2)}$ the edge $\lbrace u,v \rbrace$ is $\delta$-thick, then the vertex $u$ will be $\delta$-thick in $X_{\lbrace v \rbrace}$ and the vertex $v$ will be $\delta$-thick in $X_{\lbrace u \rbrace}$. By this observation we get that every $\sigma \in L_3$ will have an edge with $2$ $\delta$-thick vertices in every link. By lemma \ref{small implies thin lemma}, this implies 
$$m(K_1^2 \cap L_3 \cap supp (\phi)) \leq \sum_{v \in R^2} 2 \varepsilon_1 \Vert \phi_{\lbrace v \rbrace} \Vert \leq 6 \varepsilon_1 \Vert \phi \Vert = \dfrac{\varepsilon_2}{10} \Vert \phi \Vert.$$
Similarly 
$$m(K_0^2 \cap (L_2 \cup L_3) \cap supp (\phi)) \leq \dfrac{\varepsilon_2}{10} \Vert \phi \Vert.$$
Next, we observe that
\begin{dmath*}
\sum_{\tau \in A_\delta} \Vert \phi_\tau \Vert = {3 m(L_0 \cap supp (\phi)) + 2 m(L_1 \cap supp (\phi)) + m(L_2 \cap supp (\phi))} \geq \\
 2 m(K_0^2 \cap (L_0 \cup L_1) \cap supp (\phi)) + m(K_1^2 \cap (L_0 \cup L_1 \cup L_2) \cap supp (\phi) ) \geq \\ \\
 \left( 2 m(K_0^2 \cap supp (\phi)) - 2 m(K_0^2 \cap (L_2 \cup L_3) \cap supp (\phi)) \right) +  \left( m(K_1^2 \cap supp (\phi)) - m(K_1^2 \cap L_3 \cap supp (\phi)) \right) \geq \\ \\
  2 m(K_0^2 \cap supp (\phi)) + m(K_1^2 \cap supp (\phi)) - \dfrac{3 \varepsilon_2}{10} \Vert \phi \Vert \geq (2\alpha + 1) m(K_1^2 \cap supp (\phi)) - \dfrac{3 \varepsilon_2}{10} \Vert \phi \Vert .
\end{dmath*}
Note that by \eqref{k=2 thm - ineq2}, we have that
$$(1-\dfrac{\varepsilon_2}{10})  \Vert \phi \Vert \leq (1+\alpha) m(K_1^2 \cap supp(\phi)).$$
Combined with the above inequality, this yields
$$\sum_{\tau \in A_\delta} \Vert \phi_\tau \Vert \geq  3 \left( \dfrac{1 + 2 \alpha}{3 + 3 \alpha} - \dfrac{2 \varepsilon_2}{15} \right) \Vert \phi \Vert.$$
Therefore $\phi$ is $( \frac{1 + 2 \alpha}{3 + 3 \alpha} - \frac{2 \varepsilon_2}{15}, \delta)$-thin. \\ \\
Therefore, we can conclude the two inequalities analysed above as follows: for every $\phi$ with $\alpha$ defined as above, we have that
\begin{enumerate}
\item For the set $S^2$ as above
$$\forall \lbrace v \rbrace \in S^2, \Vert \phi_{\lbrace v \rbrace} \Vert \geq C_1' m(X_{\lbrace v \rbrace}^{(1)}) ,$$
$$\Vert \phi \Vert \leq \dfrac{10 (1+ \alpha)}{9} \sum_{\lbrace v \rbrace \in S^2} \Vert \phi_{\lbrace v \rbrace} \Vert ,$$
and
$$\Vert d \phi \Vert \geq \sum_{\lbrace v \rbrace \in S^2} \Vert d_{\lbrace v \rbrace} \phi_{\lbrace v \rbrace} \Vert - (\dfrac{ \varepsilon_2 (1+\alpha)}{9} + \alpha) \sum_{\lbrace v \rbrace \in S^2} \Vert \phi_{\lbrace v \rbrace} \Vert.$$
\item $\phi$ is $( \frac{1 + 2 \alpha}{3 + 3 \alpha} - \frac{2 \varepsilon_2}{15}, \delta)$-thin.
\end{enumerate}
Therefore if $\alpha \leq \varepsilon_2$ (using the fact that $\varepsilon_2 \leq 1$), we have that 
$$\Vert \phi \Vert \leq \dfrac{20}{9} \sum_{\lbrace v \rbrace \in S^2} \Vert \phi_{\lbrace v \rbrace} \Vert .$$
We also have that
$$\Vert d \phi \Vert \geq \sum_{\lbrace v \rbrace \in S^2} \Vert d_{\lbrace v \rbrace} \phi_{\lbrace v \rbrace} \Vert -\dfrac{ 11 \varepsilon_2}{9} \sum_{\lbrace v \rbrace \in S^2} \Vert \phi_{\lbrace v \rbrace} \Vert.$$
and if $\alpha \geq \varepsilon_2$ (using the fact that $\varepsilon_2 \leq 1$), we have that $\phi$ is $(\frac{1}{3} + \frac{\varepsilon_2}{15}, \delta)$-thin. \\
Last, we note that in the case where $m(K_1^2 \cap supp (\phi)) = 0$, repeating the above calculation of the second inequality shows that 
$$\sum_{\tau \in A_\delta} \Vert \phi_\tau \Vert \geq  3 \left( \dfrac{2}{3} - \dfrac{2 \varepsilon_2}{15} \right) \Vert \phi \Vert,$$
and therefore in this case $\phi$ is $( \frac{2}{3} - \frac{2 \varepsilon_2}{15}, \delta)$-thin (note that $\varepsilon_2 \leq 1$ and therefore the theorem holds for this case as well). \\
\end{proof}

Next, we'll add a further assumption about the coboundary expansion of the links of vertices to deduce isoperimetric inequalities in the case $k=2$:

\begin{theorem}
\label{k=2 isoperimetric inequality}
For every $\epsilon >0$, there are constants  $\theta_2 = \theta_2 (\epsilon) < 1$, $C_2 = C_2 (\epsilon) <1$, $\varepsilon (\epsilon) >0$, such that for every simplicial complex $X$, if 
$$\min \lbrace \lambda_0, \lambda_1, \lambda_2 \rbrace \geq \theta_2,$$ 
and 
$$\forall \lbrace v \rbrace \in X^{(0)}, \epsilon_1 (X_{\lbrace v \rbrace}) \geq \epsilon,$$
where  $\epsilon_1 (X_{\lbrace v \rbrace})$ is the 1-coboundry expansion of $X_{\lbrace v \rbrace}$.
Then for every $0 \neq \phi \in C^2 (X,\mathbb{F}_2)$, we have that
$$\left( \phi \text{ is } \varepsilon \text{-locally minimal and } \Vert \phi \Vert \leq C_2 m(X^{(2)}) \right)  \Rightarrow \Vert d \phi \Vert \geq \dfrac{3 \epsilon}{10} \Vert \phi \Vert > 0.$$
\end{theorem}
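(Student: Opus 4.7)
The plan is a case split via Theorem \ref{k=2 alternative theorem}, with coboundary expansion of links handling one branch and the spectral Lemma \ref{thin has laplacian inequality} handling the other. Fix parameters $\varepsilon_2 = \varepsilon_2(\epsilon) > 0$ and $\delta = \delta(\epsilon) > 0$ both small (precise values calibrated at the end), let $C_1' = C_1'(\varepsilon_2, \delta), C_2' = C_2'(\varepsilon_2, \delta), \theta_2' = \theta_2'(\varepsilon_2, \delta)$ be the constants from that theorem, set $C_2 := C_2'$, take $\varepsilon < C_1'/2$, and require $\theta_2 \geq \theta_2'$ to be close enough to $1$ that the spectral losses in the thin branch become negligible.

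Now suppose $\phi$ is $\varepsilon$-locally minimal with $\Vert\phi\Vert \leq C_2 m(X^{(2)})$. Theorem \ref{k=2 alternative theorem} yields either (a) a set $S^2 \subseteq X^{(0)}$ with its three properties, or (b) that $\phi$ is $(\frac{1}{3} + \frac{\varepsilon_2}{15}, \delta)$-thin. In case (a), for each $\lbrace v \rbrace \in S^2$ the bound $\Vert \phi_{\lbrace v \rbrace} \Vert \geq C_1' m(X_{\lbrace v \rbrace}^{(1)}) = \frac{C_1'}{2} m(v)$ combined with $\varepsilon$-local minimality forces $\phi_{\lbrace v \rbrace} \notin B^1(X_{\lbrace v \rbrace}, \mathbb{F}_2)$: otherwise local minimality would yield $\Vert \phi_{\lbrace v \rbrace} \Vert \leq \varepsilon m(v) < \frac{C_1'}{2} m(v)$, a contradiction. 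The assumption $\epsilon_1(X_{\lbrace v \rbrace}) \geq \epsilon$ then gives
$$\Vert d_{\lbrace v \rbrace} \phi_{\lbrace v \rbrace} \Vert \;\geq\; \epsilon\bigl(\Vert \phi_{\lbrace v \rbrace} \Vert - \varepsilon m(v)\bigr) \;\geq\; \epsilon\Bigl(1 - \tfrac{2\varepsilon}{C_1'}\Bigr) \Vert \phi_{\lbrace v \rbrace} \Vert,$$
where the first inequality uses that $\varepsilon$-local minimality implies $\min_{\psi \in B^1} \Vert \phi_{\lbrace v \rbrace} - \psi \Vert \geq \Vert \phi_{\lbrace v \rbrace} \Vert - \varepsilon m(v)$. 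Summing, plugging into the inequality $\Vert d\phi\Vert \geq \sum_{v \in S^2} \Vert d_{\lbrace v \rbrace} \phi_{\lbrace v \rbrace}\Vert - \frac{11\varepsilon_2}{9} \sum_{v \in S^2} \Vert \phi_{\lbrace v \rbrace}\Vert$ supplied by Theorem \ref{k=2 alternative theorem}, and invoking $\sum_{v \in S^2} \Vert \phi_{\lbrace v \rbrace} \Vert \geq \frac{9}{20}\Vert\phi\Vert$ produces
$$\Vert d\phi\Vert \;\geq\; \tfrac{9}{20}\Bigl(\epsilon\bigl(1 - \tfrac{2\varepsilon}{C_1'}\bigr) - \tfrac{11\varepsilon_2}{9}\Bigr)\Vert\phi\Vert,$$
which exceeds $\frac{3\epsilon}{10}\Vert\phi\Vert$ as soon as $\varepsilon_2$ and $\varepsilon/C_1'$ are sufficiently small in terms of $\epsilon$.

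In case (b) I apply Lemma \ref{thin has laplacian inequality}(2) with $k=2$ and $r = \frac{1}{3} + \frac{\varepsilon_2}{15}$ (so $\frac{r+1}{2} = \frac{2}{3} + \frac{\varepsilon_2}{30}$), giving
$$\Vert d\phi\Vert \;\geq\; \Bigl(3\lambda_2\bigl(\tfrac{2}{3} + \tfrac{\varepsilon_2}{30} - \delta - \tfrac{\varepsilon}{2}\bigr) - 2\Bigr)\Vert\phi\Vert \;=\; \Bigl(2(\lambda_2 - 1) + \tfrac{\lambda_2 \varepsilon_2}{10} - 3\lambda_2\bigl(\delta + \tfrac{\varepsilon}{2}\bigr)\Bigr)\Vert\phi\Vert,$$
which exceeds $\frac{3\epsilon}{10}\Vert\phi\Vert$ once $1 - \lambda_2$, $\delta$, and $\varepsilon$ are small compared to $\varepsilon_2$. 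The principal obstacle is calibrating the four parameters $\varepsilon_2, \delta, \varepsilon, \theta_2$ simultaneously: case (a) pushes $\varepsilon_2$ downward so that the penalty $\frac{11\varepsilon_2}{9}$ is dominated by the coboundary gain $\epsilon$, whereas case (b) pushes $\varepsilon_2$ upward to extract a spectral gain of order $\varepsilon_2/10$ at least $3\epsilon/10$; reconciling these by choosing $\varepsilon_2$ of order $\epsilon$ and then driving $\delta, \varepsilon \to 0$ and $\theta_2 \to 1$ at the correct rates is where the tightness of the statement lives.
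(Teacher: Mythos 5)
Your proof follows the paper's route exactly: invoke Theorem \ref{k=2 alternative theorem} to split into the ``thick--cluster'' case and the ``thin'' case, handle the first via $\epsilon_1(X_{\lbrace v \rbrace})\ge \epsilon$ and the second via Lemma \ref{thin has laplacian inequality}, then calibrate $\varepsilon_2, \delta, \varepsilon, \theta_2$. Your case-(a) estimate is even slightly sharper than the paper's (you keep the factor $1-\tfrac{2\varepsilon}{C_1'}$ where the paper, with $\varepsilon \le C_1'/4$, settles for $\tfrac12$).

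However, you end by flagging the parameter reconciliation as the crux and assert it can be done; it cannot, at the claimed constant $\tfrac{3\epsilon}{10}$. Taking the limits you suggest ($\lambda_2\to 1$, $\delta,\varepsilon\to 0$), case (b) requires roughly $\tfrac{\varepsilon_2}{10}\ge \tfrac{3\epsilon}{10}$, i.e.\ $\varepsilon_2 \ge 3\epsilon$, while case (a) requires $\tfrac{9}{20}\bigl(\epsilon - \tfrac{11\varepsilon_2}{9}\bigr)\ge \tfrac{3\epsilon}{10}$, i.e.\ $\varepsilon_2 \le \tfrac{3\epsilon}{11}$. These are incompatible, so no choice of $\varepsilon_2$ delivers $\tfrac{3\epsilon}{10}$ on both branches. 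The paper's own proof (with $\varepsilon_2 = \tfrac{35\epsilon}{100}$) hits the same wall: its case-1 computation yields $\Vert d\phi\Vert \ge \tfrac{65\epsilon}{2000}\Vert\phi\Vert$, and the displayed inequality $\tfrac{65\epsilon}{2000} > \tfrac{3\epsilon}{10}$ is false ($0.0325 \not> 0.3$); the correct conclusion is $\Vert d\phi\Vert \ge \tfrac{65\epsilon}{2000}\Vert\phi\Vert$, which is presumably what was intended (perhaps $\tfrac{3\epsilon}{100}$). In short: your structure is right, your diagnosis of where the tension lies is right, but you should not conclude that the calibration succeeds as stated --- it only yields a constant of order $\epsilon$, smaller than $\tfrac{3\epsilon}{10}$. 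Since the downstream application (Lemma \ref{isoperimetry implies conditions for topological overlap}) only uses $\Vert d\phi\Vert > 0$, this affects the stated constant but not the paper's main results.
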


\begin{proof}
Choose $\varepsilon_2 = \frac{35 \epsilon}{100}, \delta = \frac{\epsilon}{1000}$ and let $C_1' = C_1' (\varepsilon_2, \delta), C_2' = C_2' (\varepsilon_2, \delta),\theta_2' = \theta_2' (\varepsilon_2, \delta)$ be as in theorem \ref{k=2 alternative theorem} above. Choose
$$\varepsilon = \min \left\lbrace \dfrac{C_1'}{4},  \frac{\epsilon}{1000} \right\rbrace ,$$
$$\theta_2 = \max \left\lbrace \theta_2', \frac{2 + \frac{3 \epsilon}{10}}{ 2 + \frac{61 \epsilon}{2000}} \right\rbrace ,$$
$$C_2 = C_2' .$$
Fix $0 \neq \phi \in C^2 (X,\mathbb{F}_2)$ such that $\phi$ is $\varepsilon$-locally minimal and $\Vert \phi \Vert \leq C_2 m(X^{(2)})$. By theorem  \ref{k=2 alternative theorem} above at least one the the following occurs:
\begin{enumerate}
\item There is a set $S^2 \subset X^{(0)}$ such that 
$$\forall \lbrace v \rbrace \in S^2, \Vert \phi_{\lbrace v \rbrace} \Vert \geq C_1' m(X_{\lbrace v \rbrace}^{(1)}) ,$$
\begin{equation}
\label{k=2 isoper. proof - ineq1}
 \sum_{\lbrace v \rbrace \in S^2} \Vert \phi_{\lbrace v \rbrace} \Vert \geq \dfrac{9}{20} \Vert \phi \Vert,
\end{equation}
and
\begin{equation}
\label{k=2 isoper. proof - ineq2}
\Vert d \phi \Vert \geq \sum_{\lbrace v \rbrace \in S^2} \Vert d_{\lbrace v \rbrace} \phi_{\lbrace v \rbrace} \Vert -\dfrac{ 11 \varepsilon_2}{9} \sum_{\lbrace v \rbrace \in S^2} \Vert \phi_{\lbrace v \rbrace} \Vert.
\end{equation}
\item $\phi$ is $(\frac{1}{3} + \frac{\varepsilon_2}{15}, \delta)$-thin.
\end{enumerate}
We'll prove the needed inequality for each case. \\
\textbf{Case 1:} If there is a set $S^2$ as mentioned above, we have for each $\lbrace v \rbrace \in S^2$, we have that 
$$\Vert \phi_{\lbrace v \rbrace} \Vert \geq C_1' m(X_{\lbrace v \rbrace}^{(1)}) =  \dfrac{C_1'}{2} m(X_{\lbrace v \rbrace}^{(0)}) = \dfrac{C_1'}{2} m(\lbrace v \rbrace) .$$
By the fact that $\phi$ is $\varepsilon$-locally minimal and that $\varepsilon \leq \frac{C_1'}{4}$, we have for every $\lbrace v \rbrace \in S^2$ that 
\begin{dmath*}
{\min \lbrace \Vert \phi_{\lbrace v \rbrace}  - \varphi \Vert : \varphi \in B^1 (X_{\lbrace v \rbrace}, \mathbb{F}_2) \rbrace} \geq \\
\Vert \phi_{\lbrace v \rbrace} \Vert - \frac{C_1'}{4} m (\lbrace v \rbrace) \geq \Vert \phi_{\lbrace v \rbrace} \Vert  - \dfrac{1}{2} \Vert \phi_{\lbrace v \rbrace} \Vert  =\dfrac{1}{2} \Vert \phi_{\lbrace v \rbrace} \Vert.
\end{dmath*}
By our assumption on $\epsilon_1 (X_{\lbrace v \rbrace})$, we therefore have for each $\lbrace v \rbrace \in S^2$ that
$$\Vert d_{\lbrace v \rbrace} \phi_{\lbrace v \rbrace} \Vert \geq \epsilon \min \lbrace \Vert \phi_{\lbrace v \rbrace}  - \varphi \Vert : \varphi \in B^1 (X_{\lbrace v \rbrace}, \mathbb{F}_2)\rbrace \geq   \frac{\epsilon}{2} \Vert \phi_{\lbrace v \rbrace} \Vert .$$ 
Next, combine the above inequality \eqref{k=2 isoper. proof - ineq2} to get
\begin{dmath*}
\Vert d \phi \Vert \geq \sum_{\lbrace v \rbrace \in S^2} \Vert d_{\lbrace v \rbrace} \phi_{\lbrace v \rbrace} \Vert -\dfrac{ 11 \varepsilon_2}{9} \sum_{\lbrace v \rbrace \in S^2} \Vert \phi_{\lbrace v \rbrace} \Vert \geq \sum_{\lbrace v \rbrace \in S^2}   \Vert \phi_{\lbrace v \rbrace} \Vert  \left( \frac{\epsilon}{2} - \dfrac{ 11 \varepsilon_2}{9} \right).
\end{dmath*}
We choose $\varepsilon_2 = \frac{35\epsilon}{100}$ and therefore, this yields
\begin{dmath*}
\Vert d \phi \Vert \geq \dfrac{65 \epsilon}{900}\sum_{\lbrace v \rbrace \in S^2}  \Vert \Vert \phi_{\lbrace v \rbrace} \Vert.
\end{dmath*}
By \eqref{k=2 isoper. proof - ineq1}, we get that 
\begin{dmath*}
\Vert d \phi \Vert \geq \dfrac{65 \epsilon}{900} \dfrac{9}{20}\Vert \phi \Vert = \dfrac{65 \epsilon}{2000} \Vert \phi \Vert > \dfrac{3 \epsilon}{10} \Vert \phi \Vert,
\end{dmath*}
as needed.  \\
\textbf{Case 2:} If $\phi$ is $(\frac{1}{3} + \frac{\varepsilon_2}{15}, \delta)$-thin, then by lemma \ref{thin has laplacian inequality}, we have that 
$$\Vert d \phi \Vert  \geq   \left(  \left(   \dfrac{\frac{1}{3} + \frac{\varepsilon_2}{15}+1}{2}   - \delta  - \dfrac{\varepsilon}{2} \right)  3 \lambda_2 - 2 \right) \Vert \phi \Vert  .$$
Which can be simplified to
$$\Vert d \phi \Vert  \geq   \left(  \left( 2 + \dfrac{\varepsilon_2}{10}  - 3 \delta  - \dfrac{3 \varepsilon}{2} \right)  \lambda_2 - 2 \right) \Vert \phi \Vert.$$
We chose $\varepsilon_2 = \frac{35 \epsilon}{100}, \delta =\frac{ \epsilon}{1000}, \varepsilon \leq \frac{ \epsilon}{1000}$ and therefore this reads
$$\Vert d \phi \Vert  \geq   \left(  \left( 2 + \dfrac{35 \epsilon}{1000}  - \frac{3 \epsilon}{1000}  - \dfrac{3 \epsilon}{2000} \right)  \lambda_2 - 2 \right) \Vert \phi \Vert.$$
After more simplifying we get
$$\Vert d \phi \Vert  \geq   \left(  \left( 2 + \dfrac{61 \epsilon}{2000}  \right)  \lambda_2 - 2 \right) \Vert \phi \Vert.$$
We chose $\theta \geq \frac{2 + \frac{3 \epsilon}{10}}{ 2 + \frac{61 \epsilon}{2000}}$ and therefore we have that 
$$\Vert d \phi \Vert  \geq   \left(  2 + \dfrac{3 \epsilon}{10}  - 2 \right) \Vert \phi \Vert = \dfrac{3 \epsilon}{10} \Vert \phi \Vert ,$$
as needed.
\end{proof}

Combining the corollary \ref{k=0 isoperimetric inequality}, theorem \ref{k=1 isoperimetric inequality theorem}  with theorem \ref{decent in links theorem}, yields the following (which proves theorem \ref{isoperimetric inequalities theorem - introduction, k=1} stated in the introduction):
\begin{theorem}
For every $n >1$, there are constants $\frac{n-1}{n} < \Lambda_1 <1, \varepsilon >0, C_1 > 0$ such that for every weighted simplicial complex $X$ of dimension $n >1$ with $\lambda$-local spectral expansion, if $\lambda \geq \Lambda_1$, we have:
\begin{enumerate}
\item For every $0 \neq \phi \in C^0 (X,\mathbb{F}_2)$
$$\left( \phi \text{ is } \varepsilon \text{-locally minimal} \right) \Rightarrow \Vert d \phi \Vert >0.$$
\item For every $0 \neq \phi \in C^1 (X,\mathbb{F}_2)$
$$ \left( \phi \text{ is } \varepsilon \text{-locally minimal and } \Vert \phi \Vert \leq C_1 m(X^{(1)}) \right) \Rightarrow \Vert d \phi \Vert \geq \dfrac{1}{4} \Vert \phi \Vert >0.$$
\end{enumerate}

\end{theorem}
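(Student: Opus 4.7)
The plan is to reduce the statement directly to corollary \ref{k=0 isoperimetric inequality} and theorem \ref{k=1 isoperimetric inequality theorem}, using theorem \ref{decent in links theorem} to convert an assumption on the $1$-dimensional links (i.e.\ on $\lambda_{n-1}$) into lower bounds on $\lambda_{0}$ and $\lambda_{1}$.

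First, fix the constants $\varepsilon \in (0,1)$, $\theta_{1}<1$, and $C_{1}>0$ furnished by theorem \ref{k=1 isoperimetric inequality theorem}; these will serve as the $\varepsilon$ and $C_{1}$ claimed in the statement. Next, invoke the second part of theorem \ref{decent in links theorem}: if $\lambda_{n-1} > \frac{N-1}{N}$ for some $N \geq n$, then for every $0 \leq k \leq n-2$ we have $\lambda_{k} > \frac{N-n+k}{N-n+k+1}$. Since $\frac{N-n+k}{N-n+k+1} \to 1$ as $N \to \infty$ for each fixed $k$, we may choose $N$ large enough (depending only on $n$ and $\theta_{1}$) so that
\[
\frac{N-n}{N-n+1} \geq \theta_{1} \quad \text{and} \quad \frac{N-n+1}{N-n+2} \geq \theta_{1}.
\]
Define $\Lambda_{1} = \frac{N-1}{N}$ for this choice of $N$; then $\Lambda_{1} > \frac{n-1}{n}$, and any complex $X$ with $\lambda$-local spectral expansion for $\lambda \geq \Lambda_{1}$ satisfies $\lambda_{n-1} \geq \Lambda_{1}$, hence $\min\{\lambda_{0},\lambda_{1}\} \geq \theta_{1}$ by the above.

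With these choices, the second assertion is immediate: for any $\varepsilon$-locally minimal $\phi \in C^{1}(X,\mathbb{F}_{2})$ with $\|\phi\| \leq C_{1} m(X^{(1)})$, theorem \ref{k=1 isoperimetric inequality theorem} gives $\|d\phi\| \geq \tfrac{1}{4}\|\phi\| > 0$. For the first assertion, observe that $\lambda_{0} \geq \theta_{1} > 0$, so corollary \ref{k=0 isoperimetric inequality} applies and yields $\|d\phi\| \geq \lambda_{0}\frac{1-\varepsilon}{2}\|\phi\| > 0$ for every nonzero $\varepsilon$-locally minimal $\phi \in C^{0}(X,\mathbb{F}_{2})$, since $\varepsilon < 1$.

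There is no real obstacle here; the only point requiring any care is making sure that a single pair $(\varepsilon, \Lambda_{1})$ works simultaneously for both parts, which is handled by taking the $\varepsilon$ from theorem \ref{k=1 isoperimetric inequality theorem} (which is already $<1$) and choosing $N$ large enough to dominate $\theta_{1}$ in the descent inequality of theorem \ref{decent in links theorem}.
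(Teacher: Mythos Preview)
Your proposal is correct and follows exactly the route the paper indicates: the paper does not spell out a proof here but simply states that the theorem is obtained by ``combining corollary \ref{k=0 isoperimetric inequality}, theorem \ref{k=1 isoperimetric inequality theorem} with theorem \ref{decent in links theorem},'' and your argument is precisely that combination made explicit. The only minor point worth tidying is the boundary case $n=2$, where $k=1$ lies outside the range $0\le k\le n-2$ of the descent theorem; there $\lambda_{1}=\lambda_{n-1}\ge\Lambda_{1}=\tfrac{N-1}{N}\ge\theta_{1}$ holds directly from the hypothesis, so the conclusion is unaffected.
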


Combing the above with theorem \ref{k=2 isoperimetric inequality} (and adding further assumptions regarding the coboundary expansion of the links of vertices) yields the following:
\begin{theorem}
For every $n >2$, $\epsilon >0$, there are constants $\frac{n-1}{n} < \Lambda_2 <1, \varepsilon >0, C_1 > 0, C_2 >0$ such that for every weighted simplicial complex $X$ of dimension $n >2$ with $\lambda$-local spectral expansion, if $\lambda \geq \Lambda_2$ and if
$$\forall \lbrace v \rbrace \in X^{(0)}, \epsilon_1 (X_{\lbrace v \rbrace}) \geq \epsilon,$$
(where  $\epsilon_1 (X_{\lbrace v \rbrace})$ is the 1-coboundry expansion of $X_{\lbrace v \rbrace}$), we have that
\begin{enumerate}
\item For every $0 \neq \phi \in C^0 (X,\mathbb{F}_2)$
$$\left( \phi \text{ is } \varepsilon \text{-locally minimal} \right) \Rightarrow \Vert d \phi \Vert >0.$$
\item For every $0 \neq \phi \in C^1 (X,\mathbb{F}_2)$
$$ \left( \phi \text{ is } \varepsilon \text{-locally minimal and } \Vert \phi \Vert \leq C_1 m(X^{(1)}) \right) \Rightarrow \Vert d \phi \Vert \geq \dfrac{1}{4} \Vert \phi \Vert >0.$$
\item  For every $0 \neq \phi \in C^2 (X,\mathbb{F}_2)$
$$\left( \phi \text{ is } \varepsilon \text{-locally minimal and } \Vert \phi \Vert \leq C_2 m(X^{(2)}) \right) \Rightarrow \Vert d \phi \Vert \geq \dfrac{3 \epsilon}{10} \Vert \phi \Vert >0.$$
\end{enumerate}

\end{theorem}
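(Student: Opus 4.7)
The plan is that this theorem is essentially a packaging result: it combines the three previously established isoperimetric inequalities—corollary \ref{k=0 isoperimetric inequality} for $k=0$, theorem \ref{k=1 isoperimetric inequality theorem} for $k=1$, and theorem \ref{k=2 isoperimetric inequality} for $k=2$—with the descent result theorem \ref{decent in links theorem} that converts $\lambda$-local spectral expansion into explicit lower bounds on $\lambda_0,\lambda_1,\lambda_2$.

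First, given the input $\epsilon>0$, I would invoke theorem \ref{k=2 isoperimetric inequality} to extract constants $\theta_2(\epsilon)<1$, $C_2(\epsilon)>0$, and a tolerance $\varepsilon_2(\epsilon)>0$ for $\varepsilon$-local minimality. Separately, theorem \ref{k=1 isoperimetric inequality theorem} provides $\theta_1<1$, $C_1>0$, and $\varepsilon_1>0$. Corollary \ref{k=0 isoperimetric inequality} only requires $\lambda_0>0$ together with any $\varepsilon<1$. I would then set the unified minimality tolerance
$$\varepsilon := \min\{\varepsilon_1,\,\varepsilon_2(\epsilon),\,\tfrac{1}{2}\},$$
so that any $\varepsilon$-locally minimal cochain in the sense of the present theorem automatically satisfies the local minimality hypothesis of each of the three component results. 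I would take $C_1$ as above and $C_2$ as above, and set $\theta^* := \max\{\theta_1,\theta_2(\epsilon)\}<1$.

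The next step is to choose $\Lambda_2$ close enough to $1$ so that $\lambda$-local spectral expansion with $\lambda\geq\Lambda_2$ forces $\min\{\lambda_0,\lambda_1,\lambda_2\}\geq\theta^*$. This is exactly what part 2 of theorem \ref{decent in links theorem} provides: since $\theta^*<1$, pick $N\in\mathbb{N}$ with $N\geq n$ large enough that $\frac{N-n}{N-n+1}\geq\theta^*$, and define $\Lambda_2 := \max\{\theta^*,\frac{N-1}{N}\}$. Then under the hypothesis $\lambda_{n-1}\geq\lambda\geq\Lambda_2$, theorem \ref{decent in links theorem} gives $\lambda_k\geq\frac{N-n+k}{N-n+k+1}\geq\theta^*$ for every $0\leq k\leq n-2$, and in particular for $k=0,1,2$ (here we also use $n>2$, so that $\lambda_2$ is among the indices covered by the descent). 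Note also $\Lambda_2>\frac{n-1}{n}$ as required by the definition of $\lambda$-local spectral expansion.

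With these choices in hand, the three conclusions follow by direct citation: part 1 is corollary \ref{k=0 isoperimetric inequality} applied with $\lambda_0\geq\theta^*>0$; part 2 is theorem \ref{k=1 isoperimetric inequality theorem} applied with $\min\{\lambda_0,\lambda_1\}\geq\theta_1$; and part 3 is theorem \ref{k=2 isoperimetric inequality} applied with $\min\{\lambda_0,\lambda_1,\lambda_2\}\geq\theta_2(\epsilon)$ together with the coboundary expansion hypothesis on the vertex links. There is no genuine obstacle here—the technical work has been done in the earlier sections. The only care needed is in the bookkeeping: ensuring that a single $\varepsilon$ is small enough to meet all three minimality tolerances simultaneously, and that $\Lambda_2$ is chosen close enough to $1$ so that the descent in theorem \ref{decent in links theorem} pushes \emph{all} of $\lambda_0,\lambda_1,\lambda_2$ above the maximum threshold required by the three component theorems.
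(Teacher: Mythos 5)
Your proposal is correct and takes essentially the same route the paper does: the paper itself gives no explicit proof for this packaging theorem, stating only that it follows by combining corollary \ref{k=0 isoperimetric inequality}, theorem \ref{k=1 isoperimetric inequality theorem}, theorem \ref{k=2 isoperimetric inequality}, and the descent theorem \ref{decent in links theorem}, which is exactly what you carry out, with the correct bookkeeping that a common $\varepsilon$ is taken as the minimum of the component tolerances and $\Lambda_2$ is chosen so that the descent pushes $\lambda_0,\lambda_1,\lambda_2$ above the common threshold $\theta^*$. One small imprecision worth flagging: your parenthetical claims that $n>2$ guarantees $\lambda_2$ is ``among the indices covered by the descent,'' but for $n=3$ the descent in theorem \ref{decent in links theorem} only covers $0\leq k\leq n-2=1$; in that case $\lambda_2=\lambda_{n-1}$ is not an output of the descent but the input, which is bounded directly by $\lambda\geq\Lambda_2\geq\theta^*$ thanks to your choice $\Lambda_2=\max\{\theta^*,\tfrac{N-1}{N}\}$ --- so the conclusion $\lambda_2\geq\theta^*$ still holds, only the stated reason for $n=3$ is off.
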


Combining the above theorem with lemma \ref{topological overlap of the n-1 skeleton based on isoperimetric ineq}, gives the following theorem:
\begin{theorem}
\label{Topological overlap for 2-skeletons}
For $n>2$, $\epsilon>0$ there are constants $\frac{n-1}{n} < \Lambda_2 = \Lambda_2 (n)<1$, $1 \leq M, c = c (\Lambda_2, M, \epsilon)$, such that for any $n$-dimensional complex $X$, if:
\begin{enumerate}
\item $X$ has $\lambda$-local spectral expansion and $\lambda \geq  \Lambda_2$.
\item $$\forall \lbrace v \rbrace \in X^{(0)}, \epsilon_1 (X_{\lbrace v \rbrace}) \geq \epsilon.$$
\item $$\dfrac{\sup_{\sigma \in X^{(2)}} \vert \lbrace \eta \in X^{(n)} : \sigma \subset \eta \rbrace \vert }{\inf_{\sigma \in X^{(2)}} \vert \lbrace \eta \in X^{(n)} : \sigma \subset \eta \rbrace \vert } \leq M .$$
\end{enumerate}
Then the $2$-skeleton of $X$ has $c$-topological overlapping.
\end{theorem}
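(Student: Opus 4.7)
The plan is to prove this theorem by directly assembling the combined isoperimetric inequalities theorem immediately above (which handles $k=0,1,2$) with Lemma \ref{topological overlap of the n-1 skeleton based on isoperimetric ineq} specialized to $l=2$.

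First, I would fix the weight function on $X$ to be the normalized homogeneous weight $\overline{m_h}$, since Lemma \ref{topological overlap of the n-1 skeleton based on isoperimetric ineq} is formulated in this norm. With this choice, the $1$-dimensional links $X_\tau$ for $\tau \in X^{(n-2)}$ carry the unweighted edge weight, so the $\lambda$-local spectral expansion hypothesis of the present theorem coincides with the local spectral expansion used in the preceding results.

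Next, I would invoke the preceding combined theorem with the given $\epsilon$ to extract constants $\Lambda_2 = \Lambda_2(n,\epsilon) \in (\tfrac{n-1}{n},1)$, $\varepsilon = \varepsilon(\epsilon) > 0$, $C_1 = C_1(\epsilon) > 0$, and $C_2 = C_2(\epsilon) > 0$ with the property that for each $k \in \{0,1,2\}$ and every nonzero $\phi \in C^k(X,\mathbb{F}_2)$ that is $\varepsilon$-locally minimal and satisfies $\Vert \phi \Vert \le C_k \overline{m_h}(X^{(k)})$, one has $\Vert d\phi \Vert > 0$. (For $k=0$ the conclusion of corollary \ref{k=0 isoperimetric inequality} has no upper bound on $\Vert \phi \Vert$, so any $C_0 > 0$, say $C_0 = 1$, is admissible.) This is precisely hypothesis (1) of Lemma \ref{topological overlap of the n-1 skeleton based on isoperimetric ineq} at level $l=2$, while the bounded-ratio condition (3) of the present theorem is exactly hypothesis (2) of that lemma with bound $M$. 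Applying the lemma then produces a constant $c = c(M, C_0, C_1, C_2, \varepsilon)$, depending only on $M$, $\Lambda_2$, and $\epsilon$, such that the $2$-skeleton of $X$ has $c$-topological overlapping, as required.

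Since this final theorem is a pure assembly step, with all the hard analytic work already carried out in establishing the isoperimetric inequalities and in Lemma \ref{topological overlap of the n-1 skeleton based on isoperimetric ineq} itself, I do not expect a substantive obstacle. The only bookkeeping care needed is to align the weight function with $\overline{m_h}$ so that the lemma applies verbatim, and to notice that the dimension-$0$ isoperimetric statement, which has no upper bound on $\Vert \phi \Vert$, trivially fits the form required by the lemma.
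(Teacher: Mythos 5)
Your proof is correct and takes essentially the same route as the paper, which simply states the theorem as the conclusion of combining the preceding three-part isoperimetric theorem with Lemma \ref{topological overlap of the n-1 skeleton based on isoperimetric ineq} at $l = 2$. Your two bookkeeping observations — fixing the weight to $\overline{m_h}$ so the lemma applies, and noting that the $k=0$ statement carries no upper-norm hypothesis so any $C_0>0$ is admissible — are exactly the points one needs to make the assembly rigorous.
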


\section{Topological overlapping for $2$-skeletons of affine buildings}
The main difficulty of applying theorem \ref{Topological overlap for 2-skeletons} in order to construct a sequence of $2$-dimensional topological expanders is the second condition stated in the theorem, i.e., the condition bounding $\epsilon_1$ for each link of each vertex. Fortunately, such bounds exist for affine buildings that arise from BN-pairs. The subject of buildings is far to wide to present in the context of this article and the interested reader is referred to \cite{AbBrownBook} and references therein. Here we shall assume knowledge of the basic facts of BN-pairs and affine buildings. \\
The main result we'll use is the bound on the coboundary expansion of spherical buildings that arise from BN-pairs. This result was already mentioned in \cite{Grom}[page 457] and an explicit proof can be found in \cite{LMM}[Section 3]:
\begin{theorem}{\cite{LMM}[Corollary 3.6]}
Let $G$ be a group with a $BN$-pair and a spherical (i.e., finite) $(W,S)$ Coxeter group with $\vert S \vert = n+1$. Let $Y$ be the (finite) $n$-dimensional spherical building that arises for the $BN$-pair. Then for every $0 \leq k \leq n-1$, we have that 
$$\epsilon_k (Y) \geq \left({n+1 \choose k+2}^2 \vert W \vert \right)^{-1}.$$
\end{theorem}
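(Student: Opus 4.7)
The plan is to reduce to bounding the cofilling constant $\mu_k(Y)$ from above and then to construct explicit fillings via an averaged cone procedure in apartments. By the Solomon--Tits theorem, the spherical building $Y$ of dimension $n$ is homotopy equivalent to a wedge of $n$-spheres, so $\tilde H^k(Y,\mathbb{F}_2) = 0$ for every $0 \le k \le n-1$. Consequently $Z^k(Y,\mathbb{F}_2) = B^k(Y,\mathbb{F}_2)$ in this range, and by remark \ref{connection between cofilling and cocycle expansion} we have $\epsilon_k(Y) = \tilde\epsilon_k(Y) = 1/\mu_k(Y)$. It therefore suffices to exhibit, for every nonzero $\phi \in B^{k+1}(Y,\mathbb{F}_2)$, a cochain $\psi \in C^k(Y,\mathbb{F}_2)$ with $d\psi = \phi$ satisfying
$$\|\psi\| \le \binom{n+1}{k+2}^{2}|W|\,\|\phi\|.$$

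The filling will be produced via a cone construction inside apartments. Fix a fundamental chamber $c_0$ of $Y$. For each chamber $c$, the building axioms furnish an apartment $\Sigma(c)$ containing both $c_0$ and $c$; any such $\Sigma(c)$ is a copy of the Coxeter complex of $(W,S)$, hence a finite triangulation of the sphere $S^n$ with exactly $|W|$ top simplices. On this Coxeter complex the reduced cohomology vanishes below dimension $n$, and one can write down a canonical cone filling based at $c_0$ by straightening galleries: given $\phi|_{\Sigma(c)}$, the cochain $\psi_c$ that sums, in $\mathbb{F}_2$, the $k$-simplices appearing in the cone on the support of $\phi|_{\Sigma(c)}$ over $c_0$ is a local filling. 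A direct count on the $|W|$-chamber Coxeter complex bounds the norm of $\psi_c$ by a constant of order $\binom{n+1}{k+2}^{-1}|W|$ times the apartment norm of $\phi|_{\Sigma(c)}$.

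The global filling is then assembled by averaging the local fillings over all chambers $c$, using the normalized homogeneous weight $\overline{m_h}$ and the chamber-transitivity of $G$, which is in fact transitive on pairs (chamber, apartment containing it). The factor $|W|$ in the final bound encodes the number of chambers per apartment and compensates for the over-counting in the averaging; one factor of $\binom{n+1}{k+2}$ arises from the Coxeter-complex cone estimate, and the second factor enters when passing from weights on $(k+1)$-simplices of $Y$ to weights on $k$-simplices via proposition \ref{weight of X^k compared to X^l}. Chasing these constants delivers $\mu_k(Y) \le \binom{n+1}{k+2}^{2}|W|$ and hence the stated inequality.

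The main obstacle I anticipate is making the averaging honest: the apartment $\Sigma(c)$ containing $c$ and $c_0$ is generally not unique, so either one averages over all such apartments (using transitivity of the stabilizer of the pair $\{c_0,c\}$ on apartments containing both) or one selects an apartment canonically via the Weyl distance and a choice of retraction. The delicate point is to verify that every $k$-simplex of $Y$ is covered in the averaged filling with multiplicity uniformly bounded in terms of $|W|$ alone, rather than in terms of the thickness of $Y$; this is where the BN-pair hypothesis is used decisively, via the Bruhat decomposition and the fact that $G$ acts transitively on ordered pairs of chambers at a given Weyl distance.
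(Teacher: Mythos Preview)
The paper does not prove this statement at all: it is quoted verbatim as \cite{LMM}[Corollary 3.6] and used as a black box in the final section. There is therefore no ``paper's own proof'' to compare against.

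That said, your outline is the standard Gromov/Lubotzky--Meshulam--Mozes strategy: use Solomon--Tits to identify $\epsilon_k$ with $1/\mu_k$, then build a bounded cochain contraction by coning inside apartments and averaging over the $G$-action. This is indeed how \cite{LMM} proceeds. Two remarks on your sketch. First, in \cite{LMM} the averaging is not over all chambers $c$ with a chosen apartment $\Sigma(c)$ through $c_0$ and $c$; rather one averages over the $G$-orbit of a fixed cone operator (equivalently, over chambers opposite to a base chamber, or over retractions), and the transitivity of $G$ on chambers is what makes the averaged operator commute with the norm estimate. Your formulation with a non-canonical choice of $\Sigma(c)$ would require the extra averaging over apartments that you flag in the last paragraph, and that step is genuinely needed --- without it the filling can depend on thickness. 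Second, your constant bookkeeping is loose: the claim that the local cone estimate contributes a factor ``of order $\binom{n+1}{k+2}^{-1}|W|$'' and that proposition~\ref{weight of X^k compared to X^l} supplies the second binomial factor is not how the two copies of $\binom{n+1}{k+2}$ arise in \cite{LMM}; there they come from counting, in each apartment, how many $(k+1)$-faces contain a given $k$-face and how many $k$-faces lie in a given chamber. As written, your sketch is a correct high-level plan but not yet a proof; the honest-averaging issue you identify is exactly the point where the argument has content, and you have not carried it out.
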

We note that the bound on $\epsilon_1$ depends only on the type of the building and not on the thickness of the building. Next, let $\widetilde{X}$ be an affine $n$-dimensional building that arises from a group $G$ with an affine BN-pair constructed over a non-archimedean local field $F$ ,i.e., $F$ is a finite extension of either $\mathbb{Q}_p$ or $\mathbb{F}_q ((t))$. We recall that for every type of affine, we can choose $p$ or $q$ (related to $F$) to be large as we want. This choice determines the thickness of the building, but does not effect the type of the building ($W$ stays the same). Note that all the links of $\widetilde{X}$ (excluding $\widetilde{X}$ itself) are spherical and therefore for $X$ we have the desired bound on $\epsilon_1$ on the links of all vertices (as noted - this bound is not affected by the choice of the field $F$).  \\
Next, we recall that the spectral gaps of all $1$-dimensional spherical buildings were computed explicitly in \cite{FH} and for every type of spherical building with thickness $t$, the spectral gap is $ \geq 1- O(t^{-\frac{1}{2}})$. Therefore choosing $F = \mathbb{Q}_p$ or $\mathbb{F}_q ((t))$ with $p$ or $q$ large enough ensures that the spectral gap of all $1$ dimensional links of $X$ are greater that $\Lambda_2$ for theorem \ref{Topological overlap for 2-skeletons}. Therefore, by taking quotients of $\widetilde{X}$ that do not change the links of $\widetilde{X}$ we can get a topological expander:
\begin{corollary}
Let $\widetilde{X}$ be an $n$ dimensional ($n>2$) affine building that arises from group $G$ with an affine  $BN$-pair constructed over a non-archimedean local field $F = \mathbb{Q}_p$ or $\mathbb{F}_q ((t))$ with $p$ or $q$ large. Let $W$ be the Weyl group of the $BN$-pair. Let $\Gamma$ be a discrete subgroup of $G$ acting cocompactly on $X$ such that 
$$\forall \lbrace v \rbrace \in \widetilde{X}^{(0)}, \forall g \in \Gamma, d(g.v, v) >2,$$
where $d$ is the distance with respect to the metric of the $1$-skeleton on $\widetilde{X}$. Then there is $c=c(p \text{ or } q, W)>0$ such that $X=\widetilde{X} / \Gamma$ has $c$-topological overlapping. 
\end{corollary}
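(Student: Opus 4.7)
The plan is to deduce the corollary directly from Theorem \ref{Topological overlap for 2-skeletons} applied to the quotient complex $X = \widetilde{X}/\Gamma$. The first step is to verify that $X$ inherits a simplicial structure from $\widetilde{X}$ and that the local structure at each simplex is preserved by the quotient map. The hypothesis $d(g.v, v) > 2$ for all $g \in \Gamma \setminus \{e\}$ and all vertices $v$ guarantees that $\Gamma$ acts freely on the vertex set and, more strongly, that no two distinct vertices in a single closed star of $\widetilde{X}$ can lie in the same $\Gamma$-orbit. Consequently, for every vertex $\bar{v} \in X^{(0)}$ lying over $v \in \widetilde{X}^{(0)}$, the link $X_{\{\bar{v}\}}$ is canonically isomorphic to $\widetilde{X}_{\{v\}}$, and the analogous statement holds for higher-dimensional simplices.

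The second step is to check the three hypotheses of Theorem \ref{Topological overlap for 2-skeletons}. For the local spectral expansion, every $1$-dimensional link of $X$ is isomorphic to a $1$-dimensional spherical building, so by the Feit--Higman bound quoted in the preceding discussion its spectral gap is at least $1 - O(t^{-1/2})$; choosing $p$ or $q$ sufficiently large (depending only on $W$ and on $\Lambda_2 = \Lambda_2(n)$) forces the local spectral expansion $\lambda \geq \Lambda_2$. For the coboundary expansion of vertex links, each $X_{\{\bar{v}\}}$ is an $(n-1)$-dimensional spherical building arising from the parabolic $BN$-pair at $v$, so the theorem of \cite{LMM} cited above gives $\epsilon_1(X_{\{\bar{v}\}}) \geq \epsilon_0$ for a constant $\epsilon_0$ that depends only on $W$. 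For the ratio condition on the number of top-dimensional chambers containing a fixed $2$-simplex, regularity of the affine building implies that this number depends only on the cotype of the simplex, and since there are only finitely many cotypes the required ratio is bounded by some $M = M(W, F)$.

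With all three hypotheses verified, Theorem \ref{Topological overlap for 2-skeletons} produces a constant $c = c(\Lambda_2, M, \epsilon_0) > 0$ such that the $2$-skeleton of $X$ has $c$-topological overlapping. Since $\Lambda_2$ depends only on $n$, $\epsilon_0$ only on $W$, and $M$ only on $W$ and on the size of the residue field, the resulting $c$ depends only on $W$ and on $p$ or $q$, which is exactly the dependence asserted in the corollary.

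The main obstacle I anticipate lies in the very first step: one must carefully show that the quotient really is a simplicial complex with links isomorphic to those of $\widetilde{X}$, rather than merely a CW-complex with identifications on the boundary of a fundamental domain. The hypothesis $d(g.v, v) > 2$ is used in an essential way here, since if $\sigma \in \widetilde{X}$ and $g \in \Gamma \setminus \{e\}$ satisfied either $g.\sigma \cap \sigma \neq \emptyset$ or $g.\sigma \cup \sigma \in \widetilde{X}$, then one could extract vertices $u$ and $g.u$ lying in a common simplex or sharing a common neighbour, hence at distance at most $2$ in the $1$-skeleton, contradicting the hypothesis. Once this combinatorial reduction is established, the rest of the proof is a packaging of results already assembled in the preceding sections.
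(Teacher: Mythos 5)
Your proposal is correct and follows essentially the same route as the paper: verify the three hypotheses of Theorem \ref{Topological overlap for 2-skeletons} (local spectral expansion of $1$-dimensional links via Feit--Higman, coboundary expansion of vertex links via the \cite{LMM} bound for spherical buildings, boundedness of the chamber-count ratio by regularity of the building) and then apply that theorem to the quotient. The paper states the link-isomorphism step in one line; you unpack the combinatorial reason the hypothesis $d(g.v,v)>2$ forces the quotient to be a genuine simplicial complex with all links of dimension $\leq n-1$ isomorphic to those of $\widetilde{X}$ -- a worthwhile expansion, and the argument you sketch (any two vertices in a closed star are at mutual distance at most $2$, so cannot be $\Gamma$-equivalent) is correct.
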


\begin{proof}
We apply theorem \ref{Topological overlap for 2-skeletons}. Note that the condition 
$$\forall \lbrace v \rbrace \in X^{(0)}, \forall g \in \Gamma, d(g.v, v) >2,$$
ensures that all $1,...,n-1$-dimensional links of $\widetilde{X}$ are isomorphic to links of $X$. Therefore choosing $p$ (or $q$) large enough ensures that $X / \Gamma$ has $\lambda$-local spectral gap, where $\lambda \geq \Lambda_2$. Also, it ensures that for every vertex $v$ of $X$ we have that
$$\epsilon_1 (X_{\lbrace v \rbrace}) \geq \left({n+1 \choose 3}^2 \vert W \vert \right)^{-1}.$$
Last, the constant $M$ in the theorem can be computed by the type of the building as a function of $p$ or $q$ (if $\widetilde{X}$ is of type $\widetilde{A}_n$, then $M=1$, but otherwise is it a function of $p$ or $q$).
\end{proof}

\appendix 
\section{Cheeger inequality for weighted graphs}
The aim of this appendix is to review the basic definition of a weighted graph and to state and prove the Cheeger inequality (and some of its consequences for weighted graphs). This appendix doesn't contain any new results and we provide the proofs merely for the sake of completeness. \\
For a graph $G=(V,E)$, a weight function is a function $m: V \cup E \rightarrow \mathbb{R}^+$ such that 
$$\forall v \in V, m (v) = \sum_{e \in E, v \in e} m(e) .$$ 
Denote the ordered edges of the graph as $\Sigma (1)$, i.e.,
$$\Sigma (1) = \lbrace (v,u) : v,u \in V, \lbrace u,v \rbrace \in E \rbrace.$$
Define 
$$C^0 (G,\mathbb{R} ) = \lbrace \phi : V \rightarrow \mathbb{R} \rbrace ,$$
$$C^1 (G,\mathbb{R} ) = \lbrace \phi : \Sigma (1) \rightarrow \mathbb{R} : \forall (u,v) \in \Sigma (1), \phi ((u,v)) = - \phi ((v,u)) \rbrace .$$
Define the differential $d: C^0 (G,\mathbb{R} ) \rightarrow C^1 (G,\mathbb{R} )$ as 
$$d \phi ((u,v)) = \phi (u) - \phi (v) .$$
Define inner products on $C^0 (G,\mathbb{R} ), C^1 (G,\mathbb{R} )$ as 
$$\forall \phi , \psi \in C^0 (G,\mathbb{R} ), \langle \phi , \psi \rangle = \sum_{v \in V} m (v) \phi (v) \psi (v) ,$$
$$\forall \phi , \psi \in C^1 (G,\mathbb{R} ), \langle \phi , \psi \rangle = \sum_{(u,v) \in \Sigma (1)} \frac{1}{2} m (\lbrace u,v \rbrace) \phi ((u,v)) \psi ((u,v)) .$$
Denote $\Vert . \Vert$ to be the norm with respect to the inner products defined above (this should not be confused with our use of $\Vert . \Vert$ in the body of this paper). Then $C^0 (G,\mathbb{R} ), C^1 (G,\mathbb{R} )$ are Hilbert spaces and we can define $d^*: C^1 (G,\mathbb{R} ) \rightarrow C^0 (G,\mathbb{R} )$ as the adjoint operator to $d$. The graph Laplacian is defined as
$$\Delta^+ : C^0 (G,\mathbb{R} ) \rightarrow C^0 (G,\mathbb{R} ), \Delta^+ = d^* d.$$
Note that by its definition, $\Delta^+$ is a positive operator.  
\begin{proposition}
Let $G=(V,E)$ be a graph with a weight function $m$. Then:
\begin{enumerate}
\item $d^* : C^1 (G, \mathbb{R} ) \rightarrow C^0 (G, \mathbb{R} )$ can be written explicitly as 
$$\forall \psi \in C^1 (G, \mathbb{R} ), \forall v \in V, d^* \phi (v) = \sum_{u \in V, (v,u) \in \sigma (1)} \dfrac{m (\lbrace u,v \rbrace)}{m(v)} \phi ((v,u)).$$
\item $\Delta^+$ can be written explicitly as
$$\forall \phi \in C^0 (G, \mathbb{R}),  \forall v \in V, \Delta^+ \phi (v) = \phi (v) - \dfrac{1}{m(v)} \sum_{u \in V, \lbrace u,v \rbrace \in E} m (\lbrace u,v \rbrace) \phi (u) .$$
\item If $G$ is connected, then $\Delta^+ \phi = 0$ if and only if $\phi$ is constant.
\end{enumerate}
 
\end{proposition}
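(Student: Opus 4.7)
The plan is to handle the three claims in order, with each one reducing to a direct calculation from the definitions. All three facts are standard consequences of setting up the formal adjoint on a weighted inner product space, so I do not expect any genuine obstacle; the main thing to be careful about is the factor of $\tfrac{1}{2}$ in the inner product on $C^1(G,\mathbb{R})$ coming from summing over ordered edges.

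For part (1), I would start from the defining identity $\langle d\phi,\psi\rangle = \langle \phi, d^*\psi\rangle$ for all $\phi\in C^0(G,\mathbb{R})$, $\psi\in C^1(G,\mathbb{R})$. Expanding the left-hand side with the formula $d\phi((u,v)) = \phi(u)-\phi(v)$ gives
\[
\langle d\phi,\psi\rangle = \tfrac{1}{2}\sum_{(u,v)\in\Sigma(1)} m(\{u,v\})(\phi(u)-\phi(v))\psi((u,v)).
\]
Using the antisymmetry $\psi((u,v))=-\psi((v,u))$, the two contributions from the pair $\{u,v\}$ combine and the factor $\tfrac{1}{2}$ cancels, yielding
\[
\langle d\phi,\psi\rangle = \sum_{v\in V}\phi(v)\sum_{u:(v,u)\in\Sigma(1)} m(\{u,v\})\psi((v,u)).
\]
Comparing with $\langle \phi,d^*\psi\rangle = \sum_v m(v)\phi(v)d^*\psi(v)$ and reading off coefficients gives the claimed formula, once we divide by $m(v)$.

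For part (2), I would simply substitute $\psi = d\phi$ into the formula from part (1). Using $d\phi((v,u)) = \phi(v)-\phi(u)$, the sum splits as
\[
\Delta^+\phi(v) = \frac{1}{m(v)}\sum_{u:\{u,v\}\in E} m(\{u,v\})\bigl(\phi(v)-\phi(u)\bigr),
\]
and invoking the weight condition $m(v) = \sum_{u:\{u,v\}\in E} m(\{u,v\})$ lets me factor out $\phi(v)$, giving exactly the stated expression.

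For part (3), one direction is trivial: a constant $\phi$ is annihilated by $d$, hence by $\Delta^+ = d^*d$. Conversely, the identity $\langle \Delta^+\phi,\phi\rangle = \langle d\phi,d\phi\rangle = \Vert d\phi\Vert^2$ shows that $\Delta^+\phi = 0$ forces $d\phi = 0$, i.e.\ $\phi(u)=\phi(v)$ whenever $\{u,v\}\in E$. Connectedness of $G$ then propagates this equality across the whole vertex set, so $\phi$ is constant. The only mild subtlety here is remembering to use both directions of the adjoint identity on the same Hilbert space structure, which is how positivity of $\Delta^+$ gives the equivalence with $d\phi = 0$ rather than merely one implication.
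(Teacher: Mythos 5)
Your arguments for parts (1) and (2) match the paper's proof line by line: unfold the inner products, use antisymmetry of $\psi$ to absorb the factor $\tfrac12$, read off $d^*$, and then substitute $\psi = d\phi$ together with the weight identity $m(v)=\sum_u m(\{u,v\})$. For part (3) you take a slightly different (and equally valid) route for the nontrivial direction. The paper works directly from the explicit formula for $\Delta^+\phi(v)$ and applies a maximum principle: at a vertex $v_0$ maximizing $\phi$, the Laplacian equation expresses $\phi(v_0)$ as a weighted average of neighboring values, forcing equality on all neighbors and then, by connectedness, on all of $V$. You instead use the Hilbert space structure already in place: $\Delta^+\phi = 0$ implies $\|d\phi\|^2 = \langle d^*d\phi,\phi\rangle = 0$, so $d\phi = 0$, and connectedness then gives constancy. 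Your route is the standard ``energy method'' and is arguably cleaner given that $\Delta^+$ was defined as $d^*d$; the paper's maximum principle argument is more elementary in the sense that it does not invoke the adjoint identity again, only the pointwise formula from part (2). Both are correct and short, so the choice is largely one of taste.
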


\begin{proof}
\begin{enumerate}
\item Let $\phi \in C^0 (G, \mathbb{R})$ and $\psi \in C^1$, then 
\begin{dmath*}
\langle d \phi , \psi \rangle = \sum_{(u,v) \in \Sigma (1)} \frac{1}{2} m (\lbrace u,v \rbrace) d \phi ((u,v)) \psi ((u,v))  = \sum_{(u,v) \in \Sigma (1)} \frac{1}{2} m (\lbrace u,v \rbrace) (\phi (u) - \phi (v) ) \psi ((u,v)) = \sum_{(u,v) \in \Sigma (1)} \frac{1}{2} m (\lbrace u,v \rbrace) \left( \phi (u)  \psi ((u,v))  +  \phi (v)  \psi ((v,u)) \right) = \sum_{v \in V} m(v) \phi (v) \sum_{(v,u) \in \Sigma (1)} \dfrac{m (\lbrace u,v \rbrace)}{m(v)} \psi ((v,u)) = \langle \phi , d^* \psi \rangle.
\end{dmath*}
\item Let $\phi \in C^0 (G, \mathbb{R} )$ and $v \in V$, then
\begin{dmath*}
\Delta^+ \phi (v) = d^* d \phi (v) = \sum_{u \in V, (v,u) \in \sigma (1)} \dfrac{m (\lbrace u,v \rbrace)}{m(v)} d \phi ((v,u)) =\sum_{u \in V, (v,u) \in \sigma (1)} \dfrac{m (\lbrace u,v \rbrace)}{m(v)} \left( \phi (v) - \phi (u) \right) = \phi (v)- \sum_{u \in V, (v,u) \in \sigma (1)} \dfrac{m (\lbrace u,v \rbrace)}{m(v)} \phi (u) = \phi (v) - \dfrac{1}{m(v)} \sum_{u \in V, \lbrace u,v \rbrace \in E} m (\lbrace u,v \rbrace) \phi (u).
\end{dmath*}
\item Let $\mathbbm{1} \in C^0 (G, \mathbb{R})$, the constant function $\mathbbm{1} (v) = 1, \forall v \in V$. Then $d \mathbbm{1} \equiv 0$ and therefore $\Delta^+  \mathbbm{1} = 0$. On the other hand, if $\phi \in C^0 (G, \mathbb{R})$ is a function such that $\Delta^+ \phi = 0$, we can apply the maximum principle: let $v_0 \in V$ such that 
$$\forall v \in V,  \phi (v_0)  \geq  \phi (v)  .$$
Next,
$$0= \Delta^+ \phi (v_0) = \phi (v_0) - \dfrac{1}{m(v_0)} \sum_{u \in V, \lbrace u,v_0 \rbrace \in E} m (\lbrace u,v_0 \rbrace) \phi (u) ,$$
implies that 
$$\phi (v_0) =  \dfrac{1}{m(v_0)} \sum_{u \in V, \lbrace u,v_0 \rbrace \in E} m (\lbrace u,v_0 \rbrace) \phi (u).$$
The right hand side of the equation is an average $\phi (u)$'s such that for every $u$, $\phi (v_0)  \geq  \phi (u)$. Therefore the equality implies that 
$$\forall u \in V, \lbrace u,v_0 \rbrace \in E  \Rightarrow \phi (u) = \phi (v_0).$$
Iterating this argument (using the fact that $G$ is connected) yields that $\phi$ must be constant.
\end{enumerate}

\end{proof}

As a result of the above proposition, if $G$ is connected, then $\Delta^+$ has the eigenvalue $0$ with multiplicity $1$ and all the other eigenvalues are strictly positive. Denote by $\lambda (G)$ the smallest positive eigenvalue. Next, we can state the Cheeger inequality:
\begin{proposition}
Let $G = (V,E)$ be a connected graph. We introduce the following notations:
For $\emptyset \neq U \subseteq V$ denote 
$$m (U) = \sum_{v \in U} m (v) .$$
For $\emptyset \neq U_1, U_2 \subseteq V$ denote
$$m(U_1,U_2 ) = \sum_{u_1 \in U_1, u_2 \in U_2,  \lbrace u_1, u_2 \rbrace \in E} m (\lbrace u_1, u_2 \rbrace ).$$
Then:
\begin{enumerate}
\item (Cheeger inequality) For every $\emptyset \neq U \subsetneq V$, we have that
$$m (U, V \setminus U) \geq \lambda (G) \dfrac{m (U) m (V \setminus U)}{m(V)} .$$
\item For every $\emptyset \neq U \subsetneq V$, we have that
$$\dfrac{m(U)}{2}  \left(1 - \lambda (G) \frac{m(V \setminus U)}{m(V)} \right) \geq m (U,U)  .$$
\end{enumerate}
\end{proposition}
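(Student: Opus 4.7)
The plan is to use the variational (Rayleigh quotient) characterization of $\lambda(G)$. Since $G$ is connected, the preceding proposition gives $\ker \Delta^+ = \mathbb{R} \mathbbm{1}$, so by the min–max principle
$$\lambda(G) \;=\; \min_{0 \neq \phi \perp \mathbbm{1}} \frac{\langle \Delta^+ \phi, \phi\rangle}{\langle \phi, \phi \rangle} \;=\; \min_{0 \neq \phi \perp \mathbbm{1}} \frac{\Vert d\phi \Vert^2}{\Vert \phi \Vert^2},$$
where I used $\langle \Delta^+ \phi, \phi\rangle = \langle d^* d \phi, \phi\rangle = \langle d\phi, d\phi\rangle$. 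Hence any test function $\phi$ orthogonal to $\mathbbm{1}$ gives the inequality $\Vert d\phi \Vert^2 \geq \lambda(G) \Vert \phi \Vert^2$.

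For part (1), I will plug in the centered indicator $\phi = \mathbbm{1}_U - c\,\mathbbm{1}$ with $c = m(U)/m(V)$ chosen so that $\langle \phi, \mathbbm{1}\rangle = \sum_{v}m(v)\phi(v) = m(U) - c\,m(V) = 0$. The squared norm unfolds to
$$\Vert \phi \Vert^2 \;=\; m(U)(1-c)^2 + m(V\setminus U)\,c^2 \;=\; \frac{m(U)\,m(V\setminus U)}{m(V)},$$
after substituting $1-c = m(V\setminus U)/m(V)$ and collecting terms. For $\Vert d\phi \Vert^2$, note that $d\phi((u,v)) = \mathbbm{1}_U(u) - \mathbbm{1}_U(v)$ (the constant $c$ cancels), which is $\pm 1$ when exactly one endpoint lies in $U$ and $0$ otherwise. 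The factor $\frac{1}{2}$ in the inner product on $C^1$ cancels against the double counting of unordered edges in $\Sigma(1)$, yielding
$$\Vert d\phi \Vert^2 \;=\; \sum_{\{u,v\}\in E,\ u\in U,\ v\notin U} m(\{u,v\}) \;=\; m(U, V\setminus U).$$
Combining the two computations with $\Vert d\phi \Vert^2 \geq \lambda(G) \Vert \phi \Vert^2$ produces the Cheeger inequality.

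For part (2), I will derive it from part (1) by means of the elementary identity
$$m(U) \;=\; \sum_{v\in U}\sum_{e\ni v}m(e) \;=\; m(U,U) + m(U, V\setminus U),$$
which uses only the definition of the weight function $m(v) = \sum_{e\ni v} m(e)$ together with the fact that an edge entirely inside $U$ contributes at both of its endpoints (matching the ordered-pair convention for $m(U,U)$). Rearranging gives $m(U,U) = m(U) - m(U, V\setminus U)$, and substituting the lower bound from part (1) yields the desired upper bound on $m(U,U)$ in terms of $m(U)$ and $\lambda(G)m(V\setminus U)/m(V)$.

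The hard part is purely bookkeeping: making sure the normalization constant $c$ is exactly right to land the clean form $m(U)m(V\setminus U)/m(V)$ for $\Vert \phi\Vert^2$, and tracking the factor of $\frac{1}{2}$ in the $C^1$-inner product against the doubling from ordered pairs in $\Sigma(1)$ when computing $\Vert d\phi\Vert^2$. There is no conceptual obstacle beyond the standard Rayleigh quotient argument.
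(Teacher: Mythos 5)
Part (1) is correct and is essentially the paper's own argument: your centered indicator $\phi = \mathbbm{1}_U - \tfrac{m(U)}{m(V)}\mathbbm{1}$ is a nonzero scalar multiple of the paper's test function (which takes the value $1/m(U)$ on $U$ and $-1/m(V\setminus U)$ on its complement), and since the Rayleigh quotient $\Vert d\phi\Vert^2/\Vert\phi\Vert^2$ is scale-invariant, the two computations coincide.

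In part (2) there is a factor-of-$2$ gap. You adopt the ordered-pair reading of $m(U,U)$, under which each internal edge is counted at both endpoints, and with that reading your identity $m(U) = m(U,U) + m(U,V\setminus U)$ is indeed correct; but combined with part (1) it only yields
\[
m(U,U) \;\leq\; m(U)\left(1 - \lambda(G)\frac{m(V\setminus U)}{m(V)}\right),
\]
which is \emph{not} the stated bound $\tfrac{m(U)}{2}(1-\lambda(G)\tfrac{m(V\setminus U)}{m(V)}) \geq m(U,U)$. In fact the stated bound is false under your reading: take $G$ the $4$-cycle with unit edge weights and $U$ a pair of adjacent vertices. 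Then $\lambda(G)=1$, $m(U)=4$, and $m(V\setminus U)/m(V)=\tfrac12$, so the left-hand side of the statement is $\tfrac{4}{2}\cdot\tfrac12 = 1$, whereas the ordered-pair $m(U,U)$ equals $2$. For the proposition as written (and as the paper uses it), $m(U,U)$ must be the sum of $m(e)$ over edges $e$ lying entirely inside $U$, each counted once; then the identity becomes $m(U) = 2m(U,U) + m(U,V\setminus U)$, and the extra factor of $2$ there is exactly what produces the $\tfrac12$ on the left of the stated inequality. Once you adopt that convention, the remainder of your derivation goes through and agrees with the paper.
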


\begin{proof}
\begin{enumerate}
\item Let $\emptyset \neq U \subsetneq V$. Define $\phi \in C^{0} (G, \mathbb{R} )$ as
$$\phi (u) = \begin{cases}
\dfrac{1}{m (U)} & u \in U \\
 \dfrac{-1}{m (V \setminus U)} & u \in V \setminus U
\end{cases}.$$
Recall that $\mathbbm{1}$ is the constant function $1$ and note that
\begin{dmath*}
\langle \phi , \mathbbm{1} \rangle = \sum_{u \in V} m (u) \phi (u) 1 = \sum_{u \in U} m(u)  \dfrac{1}{w (U)} + \sum_{u \in V \setminus U} m(u) \dfrac{-1}{m (V \setminus U)} = 1 -1 =0.
\end{dmath*}
Therefore $\phi \perp \mathbbm{1}$. Recall that $\Delta^+$ is a positive operator (in particular, it has orthogonal eigenfunction), therefore
$$\langle \Delta^+ \phi , \phi \rangle \geq \lambda (G) \Vert \phi \Vert^2 .$$
Next, note the following
\begin{dmath*}
\langle \Delta^+ \phi , \phi \rangle = \langle d \phi , d \phi \rangle= \sum_{(u,v) \in \Sigma (1), u \in U, v \in V \setminus U} \frac{1}{2} m (\lbrace u,v \rbrace) \left( \dfrac{1}{m(U)} + \dfrac{1}{m(V \setminus U)} \right)^2 = m (U, V \setminus U)  \left( \dfrac{1}{m(U)} + \dfrac{1}{m(V \setminus U)} \right)^2. 
\end{dmath*}
Also,
\begin{dmath*}
\Vert \phi \Vert^2 = \sum_{u \in U} m(u) \dfrac{1}{m(U)^2} + \sum_{u \in V \setminus U} m(u) \dfrac{1}{m(V \setminus U)^2} = \dfrac{1}{m(U)} + \dfrac{1}{m(V \setminus U)} .
\end{dmath*}
Therefore, the inequality 
$$\langle \Delta^+ \phi , \phi \rangle \geq \lambda (G) \Vert \phi \Vert^2 ,$$
yields
$$ m (U, V \setminus U)  \left( \dfrac{1}{m(U)} + \dfrac{1}{m(V \setminus U)} \right)^2 \geq  \left( \lambda (G) \dfrac{1}{m(U)} + \dfrac{1}{m(V \setminus U)} \right),$$
which in turn yields
\begin{dmath*}
 m (U, V \setminus U) \geq \lambda (G)  \dfrac{1}{\dfrac{1}{m(U)} + \dfrac{1}{m(V \setminus U)}} = \lambda (G) \dfrac{m (U) m(V \setminus U )}{m (U) + m (V \setminus U)} = \lambda (G) \dfrac{m (U) m(V \setminus U )}{m(V)} ,
 \end{dmath*}
 and we are done.
 \item Let $\emptyset \neq U \subsetneq V$. Note that for every $u \in U$ we have that
 $$m (u) = \sum_{v \in V, \lbrace u, v \rbrace \in E} m (\lbrace u,v \rbrace) = \sum_{v \in U, \lbrace u, v \rbrace \in E} m (\lbrace u,v \rbrace) + \sum_{v \in V \setminus U, \lbrace u, v \rbrace \in E} m (\lbrace u,v \rbrace) .$$
 Summing on all $u \in U$ we get that
\begin{dmath*} 
 m(U) = \sum_{u \in U} \sum_{v \in U, \lbrace u, v \rbrace \in E} m (\lbrace u,v \rbrace) + \sum_{u \in U} \sum_{v \in V \setminus U, \lbrace u, v \rbrace \in E} m (\lbrace u,v \rbrace) = 2 m (U,U) + m(U, V \setminus U ).
 \end{dmath*}
 Using the Cheeger inequality proven above, we get that
$$m(U) \geq 2 m (U,U)  + \lambda (G) \frac{m(U) m(V \setminus U)}{m(V)} .$$
This yields that 
$$\dfrac{m(U)}{2}  \left(1 - \lambda (G) \frac{m(V \setminus U)}{m(V)} \right) \geq m (U,U)  .$$
\end{enumerate}
\end{proof}

\bibliographystyle{alpha}
\bibliography{bibl}
\end{document}